\documentclass[10pt,a4paper]{article}

\usepackage{amsmath,amssymb,latexsym,color,dsfont,mleftright}
 \usepackage{authblk,mathtools, nccmath,subcaption}
 \usepackage{graphicx,float}
 \usepackage[square,numbers,comma]{natbib}
 \usepackage{tikz-cd}
  \usepackage{bbm}
  \usepackage{cleveref}
\usepackage{ntheorem} 
\usepackage[colorinlistoftodos]{todonotes}
\usepackage{CJKutf8}
\usepackage{float}
\usepackage[export]{adjustbox}
\usepackage[T1]{fontenc}
\usepackage{esint}
\usepackage{mathtools}


\pagestyle{plain}

\setlength{\textwidth} {180mm} \setlength{\textheight} {250mm}
\setlength{\headheight} {20mm} \setlength{\headsep} {15mm}
\setlength{\footskip} {15mm} \voffset = -2in \hoffset = -2.5in
\oddsidemargin=2in


\newtheorem{theorem}{Theorem}
 
\newtheorem{conjecture}{Conjecture}
\newtheorem{lemma}{Lemma}
\newtheorem{proposition}{Proposition}
\newtheorem{definition}{Definition}
\newtheorem{coro}{Corollary}
\newtheorem{nota}{Notation}

\newtheorem{example}{Example}
\newtheorem{problem}{Problem}
\newtheorem{remark}{Remark}

\newtheorem{THEO}{Theorem}


\newcommand{\lqqd}{\hfill{$\Box$}\bigskip}
\newcommand{\CC}{\mathbb{C}}

\newcommand{\NN}{\mathbb{N}}

\newcommand{\RR}{\mathbb{R}}
\newcommand{\ZZ}{\mathbb{Z}}
\newcommand{\QQ}{\mathbb{Q}}
\newcommand{\LL}{\mathcal{L}}
\newcommand{\MM}{\mathcal{M}}

\newcommand{\supp}[1]{\mbox{{ \textrm{supp}\/}}[#1]}

\newcommand{\dsty}{\displaystyle}
\newcommand{\proof}{\bigskip\noindent {\sc Proof.  }}

\def\bbbuildrel#1_#2{\mathrel{
\mathop{\kern 0pt#1}\limits_{#2}}}


\title{Semiclassical expansion for  exactly solvable differential operators}

\author[1]{Jorge A. Borrego-Morell}
\author[2]{Boris Shapiro}
\affil[1]{Department of Mathematics, Universidade Federal do Rio de Janeiro, Campus Duque de Caxias, Brazil.  ORCID: 0000-0001-8871-2739}
\affil[2]{ Department of Mathematics, Stockholm University, SE-106 91
Stockholm,  Sweden. ORCID: 0000-0002-8438-3971}

\begin{document}

\maketitle

\begin{abstract}

We investigate the linear differential equation $\mathcal{M}(v(z,\eta))=\eta^M{v(z,\eta)}$, where $\eta>0$ is a large spectral parameter. The operator $\mathcal{M}=\sum_{k=1}^{M}\rho_{k}(z)\frac{d^k}{dz^k}$ ($M\ge 2$) has polynomial coefficients where the leading coefficient $\rho_M(z)$ is a monic complex-valued polynomial of degree $M$, and $\deg \rho_k \leq k$ for $k < M$. We prove the Borel summability of its WKB solutions in the Stokes regions. For $M=3$, assuming $\rho_M$ has simple zeros, we provide a complete description of the Stokes complex (the union of all Stokes curves). Finally, we demonstrate that for Euler-Cauchy equations, the WKB solutions converge in the standard sense.

\end{abstract}

\noindent {\it MSC2020 classes:}   Primary 34M60,  34E20, 34M40, 34M30, 34M35.\\

\noindent {\it Key words and phrases:}  Exact WKB analysis, WKB solution, Stokes geometry, new Stokes curve, virtual turning point.

\section{Introduction}

In mathematical physics, a linear differential operator 
\begin{equation}\label{eq:ES} 
 \mathcal{M}=\sum_{k=1}^{M}\rho_{k}(z)\frac{d^k}{dz^k}
 \end{equation}
with complex polynomial coefficients is termed \textcolor{blue}{\emph{exactly solvable}} if:   
\begin{itemize}
    \item[(i)] $\deg \rho_k \leq k$ for $1 \leq k \leq M$;
    \item[(ii)] there exists at least one $1 \leq \ell \leq M$ such that $\deg \rho_\ell = \ell$.
\end{itemize}

This terminology arises because such operators preserve the infinite flag of polynomial subspaces of degree at most $n$. Consequently, one can explicitly determine the sequences of eigenvalues and corresponding eigenfunctions—referred to here as \textcolor{blue}{\emph{eigenpolynomials}} $\{Q_n^\mathcal{M}(z)\}_{n=0}^\infty$—using linear algebra. We call an exactly solvable operator \textcolor{blue}{\emph{non-degenerate}} if $\deg \rho_M = M$, in which case the monic eigenpolynomials are unique for all sufficiently large $n$.

Exactly solvable operators first gained prominence in the 1930s regarding the \emph{Bochner-Krall problem}, which seeks to identify operators whose eigenpolynomials are orthogonal with respect to a linear functional. While this problem remains largely open, various results on the asymptotic behavior of these sequences are available in, e.g., \cite{BeRuSh04, Berg07, BerRull02, masshap01}, and specifically on the Bochner-Krall problem in \cite{Bo29, Ev01, HST, JuKwLee97, Kr40, Turb92b}.

\begin{nota}
For a polynomial $P(z)$ of degree $n$, let $\mu_P = \frac{1}{n}\sum_{i=1}^n \delta(z-u_i)$ denote its root-counting measure, where $u_i$ are the roots of $P(z)$ and $\delta$ is the Dirac delta. For an open set $U \subset \mathbb{C}$, $\mathcal{H}(U)$ denotes the space of analytic functions on $U$.
\end{nota}

  \medskip
  It has been conjectured in \cite{masshap01}  and shown in \cite  {BerRull02} that for any non-degenerate exactly solvable operator $\MM$,  the sequence $\{\mu_n^\MM\}$ of the root-counting measures of its sequence $\{Q_n^\MM(z)\}$ of eigenpolynomials converges in the weak sense to a probability measure $\mu^{\MM}$  depending only on the leading coefficient $\rho_M(z)$. Moreover $\mu^{\MM}$ is supported on an embedded graph in $\CC$ which is topologically a tree whose leaves (i.e. vertices of valency $1$) are exactly  all roots of $\rho_M(z)$, see \cite[Th.3]{BerRull02}. Further, the support of $\mu^{\MM}$ lies inside the convex hull of these roots and can be straightened out in a certain local canonical coordinate which is very natural from the point of view of semiclassical asymptotic for solutions of a linear ODE. More information about $\mu^{\MM}$ can be found in \cite{masshap01, BerRull02}.  
  
  In particular,   for $M=3$, $\supp {\mu^\MM}$ is a tree  with leaves given by the zeros of $\rho_3(z)$.  Hence, if all these zeros are simple,    $\supp {\mu^\MM}$   is the union of three smooth Jordan arcs $\{\mathfrak{r}_1,\mathfrak{r}_2,\mathfrak{r}_3\}$ connecting each zero of $\rho_3(z)$  to a common point $v$ contained in the convex hull of the zeros of $\rho_3(z)$, see Figure \ref{supp}. (The angle between any pair of these arcs at $v$ is $120^o$, see \cite{BerRull02} and Lemmas~\ref{char} and ~\ref{angles}). 
  \begin{figure}[h!]
\centering
        \includegraphics[width=0.2\textwidth]{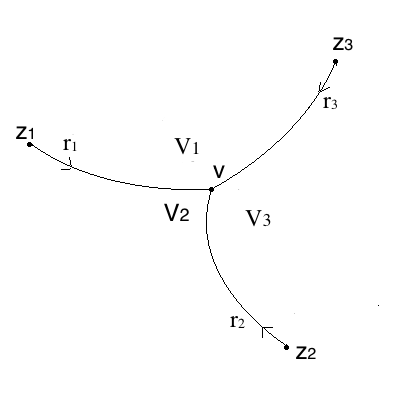}
         \caption{ Typical structure of  $\supp {\mu^\MM}$ for $M=3$}\label{supp}
\end{figure}

  
  \medskip
  \begin{nota} {\rm 
  Given a non-degenerate exactly solvable operator $\MM$,  set   $\Omega:=\CC\setminus \supp {\mu^\MM}$. For a given Jordan arc    $\tau$   connecting  $\infty$ and  an arbitrary  point $z_0\in \supp {\mu^\MM}$,  we  denote by $\Omega_{\tau}$ the open connected set $\Omega\setminus \tau$ and by $\mathcal{Z}$ the set of zeros of $\rho_M$. Given  an open set $U\subset\CC$, let  $\mathcal{H}(U)$ stand for  the space of analytic functions in   $U$.   For a set $A$, let $\mathring{A}$ denote the interior of $A$.     If  $\gamma$ is a closed Jordan curve in $\CC$, we denote by $int(\gamma)$ and $ext(\gamma)$  the  bounded and unbounded connected components of $\CC\setminus \gamma$ respectively.   For any oriented Jordan arc $\tau \subset \CC$,  we denote by  $\tau^+$   the (local) side to the left  of $\tau$.  
  
  Denote by $ w_1$ the branch of $\frac{1}{\sqrt[M]{\rho_M(z)}}$ in $\Omega$ which has asymptotic  $\dsty \frac{1}{z}$ near $\infty$.   Introduce   the other branches of $\frac{1}{\sqrt[M]{\rho_M(z)}}$ as
\begin{equation}\label{wj}
w_j(z):=e^{\frac{2 \pi i (j-1) }{M}}w_1(z),\; z\in \Omega,\; j=2,\ldots,M.
\end{equation}
Further, define $\Phi_0$ as the primitive of $w_1(z)$ in $\Omega_\tau$ such that
$$\lim_{z\to \infty}  \Phi_0(z)-\ln z=0$$ and define $\Phi_1$  as the primitive  of the function 
$\frac{(M -1)\rho^\prime_M(z)} {2M\rho_M (z)} - \frac{\rho_{M-1}(z)}{M\rho_M (z)}$ in $\Omega_\tau$  
such that $$\lim_{z\to \infty} \Phi_1(z) - \left(\frac{M -1}{2} -\frac{\rho_{M-1,M-1}} {M}\right)\ln z = 0,$$
where $\rho_{M-1,M-1}$ is the coefficient of $z^{M-1}$ in  $\rho_{M-1}(z)$. 
}
\end{nota}

  \medskip
  In a recent publication \cite{BoMo22} the first author has established the following WKB-expansion for the sequence $\{Q_n^\MM(z)\}$ of monic eigenpolynomials of a given non-degenerate exactly solvable operator $\MM$, originally conjectured in \cite{masshap01}. 

\begin{THEO}[see Theorem 1 of  \cite{BoMo22}]\label{th:BM}\label{th1:BM} For a  non–degenerate exactly solvable operator $\MM$ of order $M \ge 2$ and the sequence $\{Q_n^\MM(z)\}$ of its monic eigenpolynomials, when $n\to \infty$ one has  the asymptotic expansion  in the sense of Poincar\'e
$$Q_n^\MM(z) \sim \exp \left( n \Phi_0(z)-\left(\frac{M-1}{2}-\frac{\rho_{M-1,M-1}}{M} \right) \Phi_0(z) + \Phi_1(z)\right) \left(1+\frac{C_1(z)}{n}+\frac{C_2(z)}{n^2}+\dots\right),
$$
uniformly on compacts subsets $K\subset \Omega$, where  for $ j \ge 1$, $C_j$ are analytic in $\Omega$. 
\end{THEO}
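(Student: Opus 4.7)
The plan is to carry out a WKB analysis of $\MM v = \eta^M v$ in the large-$\eta$ limit and then identify $\eta$ with an explicit function of $n$. Since $\MM$ is upper-triangular on the filtration of polynomial spaces and $\rho_{M,M}=1$, comparing leading coefficients in $\MM Q_n^\MM = \lambda_n Q_n^\MM$ gives
$$\lambda_n = \sum_{k=1}^{M}\rho_{k,k}\, n(n-1)\cdots(n-k+1),$$
whence, by binomial expansion,
$$\eta := \lambda_n^{1/M} = n - \Bigl(\tfrac{M-1}{2} - \tfrac{\rho_{M-1,M-1}}{M}\Bigr) + O(1/n).$$
This already accounts for the term $-\bigl(\tfrac{M-1}{2}-\tfrac{\rho_{M-1,M-1}}{M}\bigr)\Phi_0(z)$ appearing in the exponent of the statement, and shows that an asymptotic series in $1/\eta$ rearranges into one in $1/n$.

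Next I substitute the standard ansatz $v(z,\eta)=\exp\bigl(\eta\,S_0(z)+S_1(z)\bigr)\bigl(1+\sum_{j\geq 1}T_j(z)\eta^{-j}\bigr)$ into $\MM v=\eta^M v$ and match powers of $\eta$. The $\eta^M$-term yields the eikonal equation $\rho_M(z)(S_0')^M=1$; the branch of $1/\sqrt[M]{\rho_M}$ with a primitive on $\Omega_\tau$ satisfying $S_0(z)-\ln z\to 0$ at $\infty$ is precisely $w_1$, so $S_0=\Phi_0$. The $\eta^{M-1}$-term is a first-order linear transport equation for $S_1$, and its explicit integration together with the prescribed logarithmic normalization reproduces exactly $\Phi_1$. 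Each lower order $\eta^{M-1-j}$ is a linear ODE for $T_j$ with inhomogeneity depending only on $T_1,\dots,T_{j-1}$, so the $T_j$ are determined recursively and are analytic on $\Omega_\tau$; a regrouping in powers of $1/n$ (using the expansion of $\eta$ from the first paragraph) produces the $C_j(z)$, and one verifies that the apparent dependence on the cut $\tau$ cancels, so the $C_j$ extend to analytic functions on $\Omega$.

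The main difficulty is to upgrade this formal expansion into a true asymptotic expansion for the \emph{specific} solution $Q_n^\MM$, uniformly on compact subsets $K\subset\Omega$. My strategy combines two ingredients. First, the Borel summability of the WKB series in each Stokes region (the principal result of the present paper) attaches to every branch $w_j$ a genuine analytic solution whose Poincar\'e expansion in $1/\eta$ is the corresponding formal WKB series. Second, $Q_n^\MM$ is singled out among solutions of $\MM v=\eta^M v$ by being entire of polynomial growth of degree $n$; since $\Phi_0(z)=\ln z+o(1)$ and $\Phi_1(z)-\bigl(\tfrac{M-1}{2}-\tfrac{\rho_{M-1,M-1}}{M}\bigr)\ln z\to 0$ at infinity, the WKB solution built from $w_1$ behaves like $z^n(1+o(1))$ in the unbounded Stokes region at $\infty$ and must agree with $Q_n^\MM$ there. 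Analytic continuation through the Stokes regions of $\Omega$, together with the tree structure of the support of $\mu^\MM$ (whose leaves lie in $\mathcal{Z}$), then forces $Q_n^\MM$ to be represented by the $w_1$-branch throughout $\Omega$, any Stokes multipliers that would activate the other branches contributing only exponentially small terms invisible in the $1/n$-expansion. The hardest step is this connection-problem analysis: tracking the Stokes curves issuing from the zeros in $\mathcal{Z}$, checking that they assemble into the tree supporting $\mu^\MM$, and verifying that the polynomial solution is subdominant (in the $w_1$-sense) across every Stokes region. Once this is in place, the uniform asymptotic expansion in $1/n$ on compacts of $\Omega$ follows from standard Borel-sum estimates and the relation $\eta=n-\delta+O(1/n)$ derived at the outset.
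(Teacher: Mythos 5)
The paper does not prove this statement: it is quoted as Theorem~\ref{th:BM} directly from Theorem~1 of \cite{BoMo22}, as the heading itself indicates. There is consequently no in-paper proof to compare against, so I can only judge your proposal on its own terms.

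Your first two paragraphs are sound: the eigenvalue computation $\lambda_n=\sum_k\rho_{k,k}\,n(n-1)\cdots(n-k+1)$, the expansion $\eta=\lambda_n^{1/M}=n-\bigl(\tfrac{M-1}{2}-\tfrac{\rho_{M-1,M-1}}{M}\bigr)+O(1/n)$, and the identifications $S_0=\Phi_0$, $S_1=\Phi_1$ from the eikonal/transport hierarchy all check out. The third paragraph, however, contains both a circularity and a genuine gap. Circularity: you lean on ``the Borel summability of the WKB series in each Stokes region (the principal result of the present paper)'' together with an inertness-type connection argument, but the very results in the present paper that would make that connection argument work---Theorem~\ref{VTPE} and Corollary~\ref{CoTh2}---are themselves proved by \emph{using} Theorem~\ref{th:BM} (via \cite[Lem.~11]{BoMo22}, which expresses $Q_n^\MM$ through the Borel sums and is invoked precisely to conclude $c_j=0$). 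In the literature's logical order, Theorem~\ref{th:BM} must therefore be established first, by methods independent of exact WKB---the natural route being reduction to a first-order $M\times M$ system, formal block diagonalization as in Lemma~\ref{Ext}, and Sibuya/Wasow-type remainder estimates rather than Borel resummation. Gap: the claim that any nonzero Stokes multiplier activating a branch $w_j$, $j\neq 1$, would contribute ``only exponentially small terms invisible in the $1/n$-expansion'' is not true across $\Omega$ as a whole. The domain $\Omega=\CC\setminus\supp\mu^\MM$ is traversed by Stokes curves of type $(1,j)$, and on the far side of such a curve the relative dominance of the $w_1$- and $w_j$-branches reverses, so a nonzero multiplier would produce a correction that is not exponentially small there and would ruin the uniform expansion on compacts of $\Omega$. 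The theorem holds because the relevant multipliers in fact vanish, and establishing that vanishing is a substantive step (it is exactly Corollary~\ref{CoTh2} here, which again appeals to \cite{BoMo22}); it cannot be replaced by a smallness estimate.
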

(The definition of the above asymptotic expansion can be found  in e.g. (7.03), p.16 of \cite{Olv74}).  

\medskip

Notice that the  expansion given in Theorem \ref{th1:BM} implies  that the polynomial eigenfunctions can be obtained of a dominant  WKB-solution   of the differential equation $\MM (v(z,\eta))=\eta^M{v(z,\eta)}$, ($\eta$ a large spectral parameter), multiplied by appropriate exponential small terms which can be defined from the whole set of WKB-solutions, see \cite[Lem.7 \& Lem.9]{BoMo22}.  Motivated by this fact, in the present article we attempt  to extend   the existing results of the exact WKB-analysis to the case of  exactly solvable operators. In particular,  we establish  the Borel summability  of the WKB-solutions of the differential equation $\MM (v(z,\eta))=\eta^M{v(z,\eta)}$, when  $\eta>0$ is a large spectral parameter, in the regions bounded by the  Stokes curves (see exact statements  below).   The description of the global geometry of Stokes curves is a challenging open  problem  for very many types of operators. In  this direction,   we give the full description of the Stokes  curves in  case  of $\rho_3$ having simple roots. Typically the study of the WKB-solutions of higher order differential operators is carried out  by using factorization in lower order operators \cite{AokiKawTak94,AKT95,AKKT04,KKT10,ka11}. In what follows,   we provide an example of an exactly solvable operator not admitting such a factorization. Finally, it is well-known that,  in general,  the WKB-series diverges  in the usual sense. However,  as we show below, in the case of  the Euler-Cauchy differential operators,   their WKB-series converges. 

\medskip

\subsection{Short historical account} 
Analysis of the Borel resummed WKB-solutions  has been a topic of interest for at least  half a century.  Bender and Wu \cite{BendWu} were the first to notice the relevance of the Borel summability to the  analysis of the WKB-solutions.  In \cite{Vor}  Voros  studied the special case of the second order Schr\"odinger equation with a quartic potential establishing  the connection formulae for its WKB-solutions, and    in \cite{Silv85}  Silverstone  discussed the connection problem further. At the same time, Ecalle was  developing the theory of resurgent functions \cite{E81,Ec84}, which is also based on the Borel sums. Extending these contributions,  several researchers \cite{Ph88,CNP93,DDP93,delpham99} introduced  what is now known  as the {\it exact}  or the  {\it complex WKB-analysis}, see   \cite{kawai2005algebraic} for some historical remarks.

\medskip
  The study  of the regions in which the  WKB-solutions are  Borel summable is a fundamental problem of the  exact WKB-analysis. For the second order Schr\"odinger type linear ordinary differential equations  
$$ \frac{d^2y}{dz^2}-\eta^2 Q(z)y=0,$$
when $Q$ is a rational function and $\eta$ is a large positive parameter, this problem  has been solved in  \cite{TAK17}.

\medskip
For the ordinary differential equations of the form 
$$\frac{d^2y}{dz^2}-\eta^2\left(f_0(z)+\frac{f_1(z)}{\eta}+\frac{f_2(z)}{\eta^2}\right)y=0, $$
where $f_0, f_1, f_2$ are analytic in some domain and  large $\Re[\eta]>0$, the Borel summability was established in  \cite{N21},   see also  references in \cite{N23}.  

\medskip
In \cite{MT17} the authors studied  the  Borel  summability of the WKB-solutions  for higher order linear differential equations of the form 
$$\sum_{j=0}^na_j(z)\left(\eta^{-1}\frac{\partial}{\partial z}\right)^jy(z,\eta)=0,$$
 with polynomial coefficients  and large positive parameter $\eta$  
by using  reduction to the linear second order differential equations via middle convolutions.

  In a recent publication \cite{N23}, the author considers  differential equations of the form 
$$\left(-\frac{\partial^n}{\partial z^n}+\sum_{k=0}^{n-2}\eta^{n-k}f_k(z,\eta)\frac{\partial^k}{\partial z^k}\right)y(z,\eta)=0,$$
where $n\geq 2$ and  $f_k$ are analytic functions of $z$ varying in some  domain $\mathbb{D}$, bounded or otherwise, of a Riemann surface,  possessing as $\eta\rightarrow \infty$  and $z$ varying uniformly in $\mathbb{D}$ asymptotic expansions 
$$f_0(z,\eta)\sim \sum_{m=0}^{\infty} \frac{f_{0,m}(z)}{\eta^m}, \quad f_k(z,\eta)\sim \sum_{m=1}^{\infty} \frac{f_{k,m}(z)}{\eta^m}, \quad 1\leq k\leq n-2. $$
The author proves  the  Borel  summability of the WKB-solutions with respect to the parameter $\eta$,  when $z$ varies in $\mathbb{D}$.

\medskip
The present manuscript is organized as follows. In Section~\ref{sec:2} we discuss some basic aspects of the general theory of the Borel summability and  formulate our main results. Section~\ref{PR} is devoted to a number of technical results required for the main proofs which we carry out  in  Section~\ref{Proo}. In Section~\ref{CoordChang} we give  an example of an exactly solvable differential operator which can not be factorized into the WKB-type linear differential operators   of lower order.  Finally, in Section~\ref{SpOp} we show the existence of differential equations which admit convergent WKB-solutions.



\medskip\noindent
\emph {Acknowledgements.}  The authors wish to express sincere gratitude to Professor Yoshitsugu Takei for helpful  discussions. The second author wants to acknowledge the financial support of his research provided by the Swedish Research Council grant 2021-04900.

\section{Preliminaries, basic notions, and formulation of main results} \label{sec:2}
 
\subsection{Preliminaries}

Below we  often use of the general notion of an operator of the WKB-type,  introduced in  \cite{AKKT04}, see also \cite{AKT95}. Let $U\subset\CC$ be an open subset, $(z,y)\in U\times \CC$ and $(z,y;\zeta,\eta)\in T^*(U\times\CC)$, where $T^*$ denotes  the cotangent bundle.  

\begin{definition}
A \textcolor{blue}{{differential operator $P$ of the WKB-type}}  on an open set $U\subset \CC_z$ is a microdifferential operator of order $0$ defined on  $(z,y;\zeta,\eta)\in T^*(U\times\CC), \eta\neq 0$ commuting with the differentiation with respect to $y$, i.e.  $[P,\partial_y]:=P\partial_y-\partial_y P=0$. Thus, its  total symbol $\sigma_0(P)$   is a formal power series of the  form:
$$\sigma_0(P)=\sum_{j\geq0}\eta^{-j}P_j\left(z,\frac{\zeta}{\eta}\right),$$
where $(P_j(z,\zeta))_{j\geq 0}$ are  holomorphic functions in $z\in U$ and entire functions in $\zeta$ (in our current context they are actually polynomials in $\zeta$), and they satisfy the following growth  condition:  

-- there exists a constant $c_0>0$ such that for each compact set $K$ in $U\times \CC_{\zeta}$, we can find another constant $c_K$ for which 
$$\sup_{K}|P_j(z,\zeta)|\leq c_Kj!c_0^j.$$

Following the traditional terminology of the microlocal analysis, we call $\sigma(P):=P_0(z,\zeta)$ the \textcolor{blue}{{principal symbol of the operator $P$}}.
\end{definition}

\begin{example} {\rm In \cite{MT17} the authors  studied  the Stokes geometry of the WKB-solutions of  linear $n$-th order differential operators of the form 
\begin{eqnarray}\label{S}
\mathcal{S}\phi(z,\eta)=\sum_{j=0}^na_j(z)\left(\eta^{-1}\frac{\partial}{\partial z}\right)^j\phi(z,\eta),
\end{eqnarray}
where $a_j(z)$ are polynomials, $a_n$ is a  non--zero complex constant, and $\eta>0$ is a large parameter. By definition,  $\sigma_0(\mathcal{S})$ can be expressed as  $\dsty \sigma_0(\mathcal{S})=P_0\left(z,\frac{\zeta}{\eta}\right)=\sum_{j=0}^na_j(z)\left(\frac{\zeta}{\eta}\right)^j$. Hence, $\mathcal{S}$ is of the WKB-type.
}
\end{example}

Consider the linear differential  operator
\begin{eqnarray}\label{OPer}
\dsty \LL=\MM- \eta^{M}
\end{eqnarray}
 where $\MM$  given by  \eqref{eq:ES} is an  exactly solvable operator.  Let   $v(z,\eta)$ be a family of solutions to the equation 
$$\LL (v(z,\eta))=0,$$ 
where  (unless otherwise specified) $\eta$ is assumed  to be  a large positive  parameter.  In other words,  $v(z,\eta)$  satisfies the relation 
\begin{eqnarray}\label{Eq1} 
v^{(M)}(z,\eta)+\sum_{k=1}^{M-1} \frac{\rho_k(z)}{\rho_M(z)} v^{(k)}(z,\eta)-\eta^{M} \frac{v(z,\eta)}{ \rho_M(z) }=0. 
\end{eqnarray}

\begin{example}{\rm 
Let  $\LL$ be  defined as  in \eqref{OPer}.  Then $\eta^{-M}\LL$ is of the WKB-type. Indeed, 
\begin{eqnarray}
\sigma_0(\eta^{-M}\LL)&=&\sum_{k=1}^{M}\eta^{-M+k}\rho_{k}(z)\frac{\zeta^k}{\eta^k}- 1 \\
   &=&-1+\rho_M(z)\left(\frac{\zeta}{\eta}\right)^M+\sum_{j=1}^{M-1}\eta^{-j}\rho_{M-j}(z)\left(\frac{\zeta}{\eta}\right)^{M-j}.
\end{eqnarray}
}
\end{example}

\medskip
In case of a linear differential operator $P$ given by  \eqref{OPer} or \eqref{S}, the  fundamental role in its Stokes geometry  is played by the branch points of its \textcolor{blue}{{\it symbol curve}}  $\Gamma_P\subset\CC^2\simeq \CC_z\times \CC_{\zeta}$ given by the \textcolor{blue}{{\it symbol equation}}
\begin{eqnarray}\label{Alg}
\begin{aligned}
\sigma(P)(z,\zeta)=0.
\end{aligned}
\end{eqnarray}

Observe that  for \eqref{OPer}, its symbol equation reduces to 
\begin{equation}\label{chareq} 
\sigma(\LL)(z,\zeta)= \rho_M(z)\zeta^M-1= 0.
\end{equation}
This equation  plays a fundamental role in the asymptotic analysis of the operator $\LL$ and is referred to as  the \textcolor{blue}{ \emph{characteristic equation}} of \eqref{OPer}. 

\medskip
 Projecting  $\Gamma_P$  on the first coordinate $\CC_z$ we obtain the finite subset of  $\Gamma_P$ consisting of the \textcolor{blue}{ {\it branching points}} of this projection, i.e. points near which the projection is not a local diffeomorphism. To globalize the situation,  one usually considers the compactification $\widehat \Gamma_P \subset \CC P^1_z\times \CC P^1_{\zeta}$ of $\Gamma_P \subset \CC_z\times \CC_{\zeta}$, its projection on $\CC P^1_z$, and its branching points. 

\smallskip
When \eqref{Alg} reduces to an algebraic equation $b_n(z)\zeta^n+b_{n-1}(z)\zeta^{n-1}+\ldots +b_0(z)=0$  (see  \cite[p.185]{jones87})  the \textcolor{blue}{ \emph{ set of critical values}} of $\widehat \Gamma_P$  is a finite subset of  $\CC P^1_z$ whose points satisfy at least one of the following 3 conditions: 
\begin{itemize}
\item $z=\infty$;
\item $b_n(z)=0$;
\item $z\in \CC_z$ is a point at which  the characteristic equation \eqref{Alg} has a multiple root in the variable $w$. 
\end{itemize} 
It is known that the projection of any branch point of $\widehat \Gamma_P$ to $\CC P^1_z$  lies among its  critical values, cf. \cite[Th 4.14.3 p.186]{jones87}.  

\smallskip

\begin{definition}\label{WKBS}
A  \textcolor{blue}{WKB-solution } of a linear differential operator  $P$ of the WKB-type is a formal solution of the form: 
\begin{eqnarray*}
\dsty \psi(z,\eta,z^*)&=&\eta^{-\frac{1}{2}}\exp\left(\dsty \int_{z^*}^z S(\zeta,\eta)d\zeta \right)\\
                  &=&\eta^{-\frac{1}{2}}\exp\left({\dsty {\eta}\int_{z^*}^z\sum_{k=0}^{\infty}\frac{S_k(\zeta)d\zeta}{\eta^{k}}}\right)\\
                  &=&\exp\left( \eta \int_{z^*}^zS_0(\zeta)d\zeta\right) \sum_{n=0}^{\infty}\frac{\phi_n(z)}{\eta^{n-\frac{1}{2}}}, 
\end{eqnarray*} 
where   $S_k$ are (locally) holomorphic functions, and $z^*$ is some reference point. 
\end{definition}

\begin{remark}{\rm 
Some authors instead of  $\eta^{-\frac{1}{2}}$ in the above definition  consider the more general normalizing factor  $\eta^{-\alpha}, \alpha\geq 0$,   cf. \cite{TAK17}.
}
\end{remark}

A WKB-solution for \eqref{OPer}  can be constructed by substituting the expression $\dsty  e^{\int^z S(\xi,\eta)d\xi}$ in \eqref{OPer} and solving it for $S(\xi,\eta)$. In Lemma \ref{Fj} we show that   $S$ should satisfy  a generalized Riccati equation of order $M-1$. Expanding $\dsty S(z,\eta)=\sum_{k=0}^{\infty}S_k(z)\eta^{1-k}$, 
 one can easily check that $S_0(z)$ is a solution of  the simple algebraic equation 
\begin{eqnarray*}
 \rho_M(z)S_0^M(z)-1=0 \quad \leftrightarrow \quad S_0(z)=\frac{1}{\root M \of {\rho_M(z)}},
\end{eqnarray*} 
and the remaining $S_n, n>0$ can be obtained recursively from $S_{n-1}, \ldots,S_0$, cf. \cite{Si60}. To obtain a WKB-solution,  (considering  $z^*$ as the  reference point) the indefinite integral in the expression is  substituted by a definite integral $\int_{z^*}^z S(\xi,\eta)d\xi$. 

In our case, the functions $S_n$  may  have  singularities at each zero $z_k$ of $\rho_M(z)$ in such a way that  the above integral of $S(\xi,\eta)$ can not be defined in the usual sense   when $z^*=z_k$. Indeed,   from \cite[Prop.1 b)]{BoMo22}, the function $S_1$ is given by 
$$S_1(z)=\frac{(M -1)\rho^\prime_M(z)} {2M\rho_M (z)} - \frac{\rho_{M-1}(z)}{M\rho_M (z)}.$$
In particular, if $\rho_{M-1}\neq \frac{(M-1)}{2}\rho_M^{\prime}$,   we have  $S_1(z)\sim  a_k(z-z_k)^{-m_k}$  in a neighborhood of $z_k$, where $m_k$ is the multiplicity of the root $z_k$ and $a_k\in \CC$.  Notice also that  for $\rho_M(z)=(z-z_k)^M$,  the function $S_0$ reduces to $\dsty \frac{\sqrt[M]{1}}{z-z_k}$.

Consequently, when the reference point is taken at $z_k$, we interpret the integral $\int_{z_k}^z$ in the sense of the \textcolor{blue}{\emph{Hadamard finite part}}. We recall its definition, originally introduced by Hadamard to treat specific classes of divergent integrals \cite[Ch.1]{MiLa86}. For simplicity, consider a function $f(x)$ for $x \in \mathbb{R}$ of the form
\begin{equation}\label{f(x)}
f(x) = a(x-c)^{-\nu} + b(x-c)^{-1} + s(x),
\end{equation}
where $c \in \mathbb{R}$, $\Re[\nu] > 1$, $\nu \neq 1$, and $s(x)$ is integrable on $[c, C]$. For any $\delta$ such that $c < c+\delta < C$, let $J(\delta) = \int_{c+\delta}^C f(x) \, dx$. Term-by-term integration then yields
$$
J(\delta) = -\frac{a}{\nu-1}(C-c)^{-\nu+1} + b\ln(C-c) + \frac{a}{\nu-1}\delta^{-\nu+1} - b\ln \delta + \int_{c+\delta}^C s(x) \, dx.
$$

When $\delta\to 0$, the function $J(\delta)$ has  no finite limit because of the terms $\frac{a}{\nu-1}\delta^{-\nu+1}-b\ln \delta$, but the remaining terms in the right-hand side have a limit which is called the \textcolor{blue}  { \emph{finite part of the integral}} $\int_{c+\delta}^C f(x)dx$ when $\delta\to 0$. We  use the notation  ${\tt Fp}\int_{c}^Cf(x)dx$ to represent this finite part.  Notice that from \eqref{f(x)} we have 
\begin{eqnarray}\label{HadamarFP}
{\tt Fp}\int_{c}^Cf(x)dx=-\frac{a}{\nu-1}(C-c)^{-\nu+1}+b\ln(C-c)+\int_{c}^Cs(x)dx.
\end{eqnarray}

The definition \eqref{HadamarFP}   is easily extended to the case when the integration is taken along a smooth arc $\gamma$ in the complex plane.  Other regularization methods  might be considered to deal with  divergent integrals, see for instance \cite[Ch.2 p.19]{AKT95}.

 \medskip

Within the domain $\Omega$, the algebraic function $S_0(z)$ resolves into $M$ single-valued branches $w_1(z), \dots, w_M(z)$. We write $S_0(z) = w_j(z)$ for $j \in \{1, \dots, M\}$ to specify the branch associated with a given index $j$.

Having fixed $S_0(z)$, we recursively determine the higher-order terms $S_k(z)$ for $k \ge 1$ as single-valued functions on $\Omega_\tau$. This process yields $M$ formal WKB solutions $\{\psi_j\}_{j=1}^M$ of \eqref{OPer} in $\Omega_\tau$. The following definition is fundamental to establishing the Borel summability of $\psi_j$ as stated in Theorem~\ref{Main}.

  \begin{nota}\label{fitn}
Take   $z_k\in \mathcal{Z}$ and let  $\psi_j$ be a WKB-solution. Denote by $\mathcal{D}(z_k)$ the set of indices $n$ for which the integral $\dsty \int_{z_k}^zS_n(\zeta)d\zeta$ diverges, cf. Lemma \ref{Ext}.
\end{nota}

 It might happen that the WKB-solutions  converge in some subdomain of $\Omega_\tau$, as in e.g. Theorem \ref{EC}. But, generally, $\psi_j,\; j=1,\dots, M$ are diverging in the whole $\Omega$. Due to   the important discovery of Voros \cite{Vor} and  Ecalle \cite{Ec84},  the  use of the Borel resummation technique (or the Borel-Laplace method) with respect to a large parameter $\eta$ rescues the situation; we recall the definitions of  Voros \cite{Vor} below. 
 
\begin{definition}\label{BT}
 Let $\eta>0$ be a large parameter and $y_0,f_n$,  $\alpha\in \RR\setminus \ZZ_{\leq 0}$ be constants. For an infinite series 
 $$\dsty f(\eta)=\exp(\eta y_0)\sum_{n=0}^{\infty}f_n\eta^{-(n+\alpha)},$$ the \textcolor{blue}{Borel transform} $f_B(y)$  and the \textcolor{blue}{Borel sum} $F(\eta)$ of $f$ are defined as 
$$
 f_B(y)=\sum_{n=0}^{\infty}\frac{f_n}{\Gamma(n+\alpha)}(y+y_0)^{n+\alpha-1}\quad \text{  and  } \quad 
 F(\eta)=\int_{-y_0}^{\infty}e^{-y\eta}f_B(y)dy,
$$
respectively provided that the right-hand sides exist.  Here $\Gamma(s)$ is  Euler's $\Gamma$-function and the integration path is taken parallel to the positive real axis. 
 
\end{definition}

 \begin{remark}{\rm 
In accordance with Definition \ref{WKBS}, we fix $\alpha = 1/2$ throughout this manuscript. Note that Borel transform conventions vary in the literature; while we adopt the definition above, alternative formulations appear in works such as \cite{AKT91, Vor}, where the transform is defined as
$$f_B(y)=\sum_{n=0}^{\infty}\frac{f_n}{\Gamma(1+n+\alpha)}(y+y_0)^{n+\alpha}.$$
}\end{remark}

\medskip

\medskip

  Let $\psi_{j,B}(z,y,z^*)$ denote the Borel transform of the WKB solution $\psi_j(z,\eta,z^*)$, defined as 
\begin{equation}\label{BTR}
\psi_{j,B}(z,y,z^*)=\sum_{n\geq 0}\frac{f_n(z,z^*)}{\Gamma(n+\frac{1}{2})}\left(y+y_0(z,z^*)\right)^{n-\frac{1}{2}},
\end{equation}
where $y_0(z,z^*) = \int_{z^*}^z S_0(\zeta) \, d\zeta$ and the coefficients $f_n$ are determined recursively once $S_0$ is fixed (cf. Lemma \ref{Ext}). We denote the Borel sum of $\psi_j(z,\eta,z^*)$ by $\Psi_{j}(z,\eta,z^*)$.

 \medskip

For $\dsty \psi(z,\eta)=\exp(\eta y_0(z))\sum_{n\geq 0}\phi_n(z)\eta^{-(n+\alpha)}, \alpha>0,\alpha\notin\ZZ$, it is immediate from the definition that 
$$
\dsty \left[\frac{\partial \psi}{\partial z}\right]_B=\frac{\partial }{\partial z}\psi_B\quad  \text{and} \quad \dsty \left[\eta^m \psi\right]_B=\left(   \frac{\partial}{\partial y}\right)^m\psi_B, m=1,2,\ldots
$$

 In particular, if $\psi(z,\eta)$ is a formal solution of the linear differential equation \eqref{Eq1}, 
then its Borel transform $\psi_B(z,y)$ satisfies the  linear  partial differential equation
\begin{eqnarray}\label{PDE}
\LL_B\left(z,y, \frac{\partial u}{\partial z},\frac{\partial u}{\partial y}\right)=\sum_{k=1}^{M}\rho_{k}(z)\frac{\partial^k u}{\partial z^k}- \frac{\partial^M u }{\partial y^M}=0,
\end{eqnarray}
which coincides with the Borel transform of the operator $\LL$.

 The behavior of the WKB-solutions crucially depends on the critical  points of the characteristic equation since  these solutions do not  provide a single-valued  fundamental  system   in  full neighborhoods of the critical points, cf. \cite[Def.3.1-2 p.39]{wasow2012}. This difficulty leads to the notion of turning points. 
\begin{definition}[see Def. 1.2.1 p.21 \cite{H15vir}, \cite{Ko00,Fed93,ka11}]\label{Tp}
Let $P$  be a differential operator of the WKB-type in an open set $U\subset \CC_z$. A  critical point $a$ of \eqref{Alg}  is called a  turning point of $P$.  When \eqref{Alg} reduces to an algebraic equation with coefficients in $\mathcal{H}(U)$ and $b_n(a)=0$, we  additionally say   that $a$ is of \textcolor{blue}{ pole-type}, and if $b_n(a)\neq 0$, we  refer to $a$ as an  \textcolor{blue}{ordinary turning point}.  When two roots $\zeta_j(z)$ and $\zeta_{j^{\prime}}(z), j\neq j^{\prime}$ of the symbol equation merge at a turning point $a$, we say that  $a$ has \textcolor{blue}{ type $(j,j^{\prime})$}. (Notice that if more than two roots collide at $a$ then several types are assigned to $a$.) 

\medskip
\noindent
 If $a$ is  a   turning point of type $(j,j^{\prime})$, then a curve emanating from the point $a$ and defined by the equation 
$$
 \Im \left[\int_{a}^z(\zeta_{j}(\xi)-\zeta_{j^{\prime}}(\xi))d\xi\right]=0,$$
  is called a \textcolor{blue}{Stokes curve of type $(j,j^{\prime})$ emanating from $a$}.   
  
  \medskip
   We denote by $\mathcal{S}_{a,j}$ the set of all Stokes curves of type $(j,j^{\prime}), 1\leq j^{\prime}\leq M, j\neq j^{\prime}$ emanating from $a$. 
 
\medskip
\noindent 
An ordinary turning point of a linear ODE at which exactly  two roots $\zeta_j$ and $\zeta_{j^{\prime}}$ of its symbol equation  collide is called \textcolor{blue}{simple}, that is 
$$\left.\frac{\partial \sigma(P)}{\partial z}\right|_{(z,\zeta)=(a,\zeta_j(a))}\neq 0.$$


\end{definition}


\begin{remark}{\rm 
In Section \ref{SpOp}, we examine the Euler-Cauchy operator. In this instance, the standard definition of Stokes curves is inapplicable because the associated integral fails to converge in the usual sense. One might consider defining the curves via the Hadamard finite part as
\begin{equation}\label{STdeg}
\Im \left[{\tt Fp}\int_{0}^z(\zeta_{j}(\xi)-\zeta_{j^{\prime}}(\xi))\,d\xi\right] = \Im \left[(e^{\frac{2\pi (j-1)i}{M}}-e^{\frac{2\pi (j^{\prime}-1)i}{M}}) \ln z \right] = 0.            
\end{equation} 
However, as shown in the sequel, no Stokes phenomenon occurs in this case.
}\end{remark}

%
%

\begin{remark} {\rm Although for a generic linear ODE depending on a parameter,   all its ordinary turning points are typically simple, equation~\eqref{OPer} we consider   below is highly non-generic.  
Namely, one can  easily observe from the characteristic equation \eqref{chareq} that the set of ordinary turning points of  \eqref{OPer} coincides with  the zero locus of $\rho_M(z)$. Moreover  each of these zeros is a pole-type   turning  point of every type $(j,j^{\prime})$, for all $1\leq j< j^{\prime}\leq M$. Notice also that in our case, the definition of the Stokes curves emanating from the turning point $a$ reduces to 
$$ \Im \left[\int_{a}^z(w_{j}(\xi)-w_{j^{\prime}}(\xi))d\xi\right]=0.$$
}
\end{remark}

%
%
%

\begin{remark}{\rm 
  Let $a$ be an ordinary  turning point of type $(j, j^\prime)$. Some authors define  a Stokes curve as  given by 
\begin{eqnarray}\label{Stc}
\Re\left[\int_{a}^z(\zeta_{j}(\xi)-\zeta_{j^{\prime}}(\xi))d\xi\right]=0,
\end{eqnarray}
emanating from $a$,  
see e.g.  \cite[p.292]{Fed93}.   

}

\end{remark}

It is well-known that  in the second order case the Stokes regions (the regions where the Borel sum of  the WKB-solutions is well-defined)  are domains in the $z$-plane bounded by the Stokes curves, cf. \cite[p.26]{kawai2005algebraic}). For  linear ODE of order greater than $2$, Stokes regions  are much more difficult to describe since the totality of the Stokes curves emanating from the (original) turning points  are not enough to describe the boundaries of the Stokes regions.  As was first noticed  in \cite{Berk82}, see also \cite{AokiKawTak94},   the Borel summability of  the WKB-solutions may fail on  \textcolor{blue}{\emph{new Stokes curves}} obtained from  ordered  crossing points of the original Stokes curves, in the terminology  of   \cite{Berk82}.  Thus, new Stokes curves emanating from  new turning points are a natural generalization of the original Stokes curves emanating from the original turning points.  

Due to results of Voros (see \cite{Vor}) who  first recognized that the Borel transform is a solution of a linear partial differential operator and to microlocal analysis  \cite{AokiKawTak94,SKK73},   new Stokes curves can be defined  as the Stokes curves emanating from  "new" singularities of the bicharacteristic strip. These are  baptised in  \cite{AokiKawTak94}, where this concept was introduced  as "new turning points" or  "virtual turning points".

\begin{definition}[see p.29, \cite{H15vir}, \cite{Ho90,SKK73,DuHo94}] 
\label{BS}
 A   \textcolor{blue}{bicharacteristic strip} $\mathcal {BS}(t)$ associated with a  linear partial differential operator   is a complex-analytic  curve $\mathcal {BS}(t)=(z(t),y(t);\zeta(t),\epsilon(t))_{t\in \CC}$ in the cotangent bundle $T^*\CC^2_{(z,y)}$ with coordinates $(z,y;\zeta, \epsilon)$ where $\zeta$ is dual to $z$ and $\epsilon$ is dual to $y$ defined by the following system of Hamilton--Jacobi equations:  
\begin{eqnarray}
 \label{Ha1}\frac{dz}{dt}&=&\frac{\partial \sigma}{\partial \zeta}\\
\label{Ha2} \frac{dy}{dt}&=&\frac{\partial \sigma}{\partial \epsilon}\\ 
\label{Ha3} \frac{d\zeta}{dt}&=&-\frac{\partial \sigma}{\partial z}\\
\label{Ha4} \frac{d\epsilon}{dt}&=&-\frac{\partial \sigma}{\partial y},\\
\label{Ha5} \sigma(z,y,\zeta,\epsilon)&=&0, 
\end{eqnarray}
where $\sigma $ denotes the principal symbol of  the operator.  The image of the projection of a bicharacteristic strip $\mathcal {BS}(t)$ to the base  $\CC^2_{(z,y)}$ is called a \textcolor{blue}{bicharacteristic curve} and is denoted by  $\mathcal{BC}(t):=\{(z(t),y(t))\}_{t\in \CC}$.

\end{definition}

\begin{remark}{\rm  One can check that since the initial condition  $\mathcal {BS}(t_0)$ of a bicharacteristic strip  lies on the hypersurface $\sigma(z,y,\zeta,\epsilon)=0$ then the whole bicharacteristic strip $\mathcal {BS}(t)$ lies on it as well. }
\end{remark}

A fundamental result of the  microlocal analysis  claims that  the singularities of solutions of a linear partial differential equation with simple  (in the sense of microlocal analysis \cite[Ch.II]{SKK73})  characteristics,  propagate along  the bicharacteristic strips, see also \cite[Cor.7.2.2]{DuHo94}. Notice that by \eqref{Ha3},  for a  WKB-type differential operator with ordinary simple  turning points  one has by definition  $\dsty \frac{\partial \sigma}{\partial z}(a,0,\zeta_0,1)\neq 0$ which implies   that the bicharacteristic strip emanating from $(a,0,\zeta_0,1)$ is locally non-singular in $T^*\CC^2_{(z,y)}$.  The singularities of the   Borel transform   belong to the same non-singular bicharacteristic strip and  coalesce at a turning point. Such singularities are then called "cognate", as they belong to the same bicharacteristic strip.  (Notice that on the bicharacteristic curve other  singularities might exist as well).  
  The most basic one among such singularities is a simple self-intersection point on $\mathcal {BC}(t)$ at which two of its smooth local branches intersect transversally, while the  lifts of these two local branches to the respective bicharacteristic strip $\mathcal {BS}(t)$ are  disjoint. The projections of such self-intersection points from $\mathcal {BC}(t)$ to $\CC_z$ were baptized \textcolor{blue}{\emph{virtual turning points}} in \cite{AokiKawTak94}, where they were first introduced and studied. 
 
 \begin{definition}[\cite{AokiKawTak94,AKKT05,AHKK08,H15vir}]\label{VTP}
Let $P$ be a differential operator of the WKB-type with the principal symbol $\sigma_0(z,\zeta)$  and assume that its Borel transform $P_B$ is well-defined.  Assume additionally  that the bicharacteristic strip  is non-singular at the turning points.   A  \textcolor{blue}{virtual turning point} of $P$ is defined  as the $z$-component of a self-intersection point of a bicharacteristic curve $\mathcal {BC}(t)$. If the self-intersection is  associated with the factor $(\zeta-\zeta_j(z)\eta)$ and $(\zeta-\zeta_k(z)\eta)$ of the principal symbol $\dsty \sigma(P)=\prod_j (\zeta-\zeta_j(z)\eta)$, then the virtual turning point is said to be of type $(j,k)$.

If $z^*$ is a virtual turning point  of type $(j,k)$, the curve emanating from $z^*$
$$ \Im \left[\int_{z^*}^z(\zeta_{j}(\xi)-\zeta_{k}(\xi))d\xi\right]=0,$$ 
is called a \textcolor{blue}{new Stokes curve of type $(j, k)$}.
\end{definition}

\begin{remark}
{\rm   In the case of ordinary turning points of a linear differential operator of the WKB-type of multiplicities greater than $1$  the  singularities of solutions  propagate along the so-called bicharacteristic chains,  as shown in \cite{KKO74}, see also \cite[Ch.3]{H15vir}.  In this case  singularities bifurcate along two mutually tangent bicharacteristic curves at a double turning point where the simple characteristic condition is violated.}
\end{remark}

Observe that an exactly solvable operator can be considered as a linear differential operator with poles at the zeros of $\rho_M(z)$. Virtual turning points of operators with  pole-type original turning points have been previously considered in \cite{KKT10}. The authors  specifically considered a third order differential operator with a pole at $z=0$ constructed from  the  Berk–Nevins–Roberts operator \cite{Berk82} by using a singular coordinate transformation. However it turns out that,  in general,  the analysis  of the operator $\LL$ can not be reduced to that  of an operator with ordinary turning points by means of a coordinate transformation. Namely,   in Section \ref{CoordChang} we provide an example showing that such factorization  does not exist  in a neighborhood of a turning point of a cubic exactly solvable differential operator. 

\medskip
For this reason, when extending the concept of  virtual turning points to  $\LL$ in order to  analyze  the  propagation of singularities of its Borel transform  $\LL_B$ we follow a different approach suggested in   \cite[Sect. VII p.240]{DuHo94}, see also \cite{Ho90} and \cite[p.44]{Ni73}. 

Namely, by \cite[Cor.7.2.2]{DuHo94},  for a linear differential operator with complex coefficients and   principal complex symbol $p(z,\zeta), z=(z_1,\ldots,z_n), \zeta=(\zeta_1,\ldots, \zeta_n)$,  the singularities of its solutions propagate along $\mathcal {BS}(t)$ provided that
 \begin{itemize}
 \item[a)] $\nabla_{\zeta}\Re[p]\neq 0$  and $\nabla_{\zeta}\Im[p]\neq 0$ are linearly independent,
 \item[b)] $H_p\overline{p}=0$,
\end{itemize}  
 where $\dsty H_pq=\frac{1}{\imath}\sum_j \frac{\partial p}{\partial \zeta_j}\frac{\partial q}{\partial z^j}-\frac{\partial p}{\partial z^j}\frac{\partial q}{\partial \zeta_j}$ is the Hamiltonian operator and  $\frac{\partial }{\partial z}=\frac{1}{2}\left( \frac{\partial }{\partial x}-\imath \frac{\partial }{\partial y}  \right )$.   
 
 For a linear differential operator with holomorphic  coefficients, condition $b)$ is automatically satisfied since $$H_p\overline{p}= [p,\overline{p}]=0, $$ where $[p,\overline{p}]$ denotes  the Poisson bracket of $p$ and $\overline{p}$,   and condition $a)$ reduces to  $\nabla_{\zeta}p\neq 0$. 
 
 \medskip
 Virtual turning points for  the operator $\LL$ can not be defined by formally following the approach  for  ordinary turning points since  the expression $\dsty \frac{\partial \sigma_0}{\partial z}$ has a singularity at each turning point. Therefore the  notion of "cognate" singularities in the  sense of ordinary turning points does not apply in our case. 
 
 Below we describe  the Stokes regions for the third order exactly solvable operators and  consider the ordered crossings of their Stokes curves following the original approach of Berk–Nevins–Roberts in \cite{Berk82}, but without providing the rigorous definition of a  virtual turning point. 
In Theorem \ref{Main} b), we describe  the singularities of the bicharacteristic strip as the initial step toward the understanding of  this concept.  We plan to return to this notion  for exactly solvable operators  in a future publication.

\begin{nota}\label{LbSt}
 Let $z=a$ be a   turning point of type $(j, j^\prime)$ (ordinary or pole-type). Then each segment of the Stokes curve emanating from the point $a$ is labeled by either $(j > j^\prime)$ or by $(j < j^\prime)$, depending on  whether 
$$\Re\left[\int_a^z(w_j(\xi)-w_{j^\prime}(\xi))d\xi     \right]>0$$
or 
$$\Re\left[\int_a^z(w_j(\xi)-w_{j^\prime}(\xi))d\xi     \right]<0,$$
cf.  {\rm \cite[Def. 1.2.2 p.22]{H15vir}}.
 
\end{nota}

\begin{definition}[see p.38, \cite{H15vir}] 
\label{OC}
Consider two Stokes curves of types $(j_1,j_2)$ and $(j_2,j_3)$ and assume that they are  crossing at a point $C$. We say that  they define  an \textcolor{blue}{ordered crossing} at $C$  if either $j_1< j_2<j_3$ or $j_1>j_2>j_3$. Following \cite{Berk82}, we introduce a  \textcolor{blue}{new Stokes curve emanating from $C$} as given by 
$$ \Im \left[\int_{C}^z(\zeta_{j_1}(\xi)-\zeta_{j_3}(\xi))d\xi\right]=0.$$
  (By calling it  a "new Stokes curve" we  distinguish it  from the usual Stokes curves emanating from a usual turning point). We denote by $\mathcal{N}$  the set of all new Stokes curves.
\end{definition}

\begin{remark}{\rm 
For linear differential operators where a rigorous notion of a virtual turning point is established, a \emph{new Stokes curve} is defined as a Stokes curve emanating from such a point, as in Definition \ref{VTP}. Since a formal definition of a virtual turning point for the operator \eqref{OPer} is currently lacking, we adopt the classical definition of new Stokes curves proposed in \cite{Berk82}.
}\end{remark}

\begin{definition}[see Def. 1.4.3 p.38, \cite{H15vir}] \label{Inert}
We say that the Stokes curve is \textcolor{blue}{{ inert near $z_0$}} if  there is no Stokes phenomena,  i.e. if there is no discontinuous change of the asymptotic   near a point $z_0$ lying on a Stokes curve. If a Stokes curve is inert  near all its points,  we simply call it  \textcolor{blue}{{ inert}}. 
 \end{definition}

%
  

\subsection{Formulation of the main results}

\begin{theorem}\label{Main}
Let $z_k\in\mathcal Z$ and $z\in\Omega$, and assume that 
$\rho_M(z)\neq (z-a)^M$. Let  $\mathcal{S}_{z_k,j}$ be as  in Definition \ref{Tp}, and $\mathcal{N}$ be as in Definition \ref{OC}. If $\psi_{j,B}(z,y,z_k)$ is the Borel transform of $\eta^{-\frac{1}{2}}\exp ({\tt Fp}\int_{z_k}^zS(\zeta,\eta)d\zeta )$, then    

\begin{itemize}
\item[a)]   $\psi_{j,B}(z,y,z_k)$ has  singularities in the curve   $\dsty y=-\int_{z_k}^{z}w_j(t)dt$. The remaining singularities, if any,  are included in  the set  
$$\dsty \left\{(z,y):y=-\int_{z_k}^{z}w_l(t)dt, l=1,\ldots,M; l\neq j\right\};$$ 

additionally,  $\psi_{j,B}(z,y,z_k)$ is of exponential type when  $$y\in [-y_0(z,z_k), t\Re[-y_0(z,z_k)]+\imath(\Im[-y_0(z,z_k)]) ], t>0; z\notin \mathcal{S}_{z_k,j}\cup \mathcal{N};$$ 

\item[b)] Let $\mathcal L_B$ denote the Borel--transformed operator and
$\mathcal{BS}(t)$ a bicharacteristic strip associated with its principal
symbol in the sense of Definition~\ref{BS}.  
Then the projection $\mathcal{BC}(t)$ of $\mathcal{BS}(t)$ to the base
$\mathbb C^2_{(z,y)}$ contains no singular points in the $z$--variable
other than the turning points $z_k\in\mathcal Z$.

\item[c)]   $\eta^{-\frac{1}{2}}\exp ({\tt Fp}\int_{z_k}^zS(\zeta,\eta)d\zeta )$ is Borel summable provided that $z\notin \mathcal{S}_{z_k,j}\cup \mathcal{N}$.

\end{itemize}
\end{theorem}

\begin{theorem}\label{VTPE}
For $M\geq 2$,  take a Stokes curve $\kappa$ of the equation \eqref{OPer}  of type $(1,j)$,   emanating from a   turning point $z_k\in \mathcal{Z}$  of \eqref{OPer}, and  going to $\infty$. Assume that  $\ell$ neither connects   $z_1$ with another turning point nor is a loop connecting $z_1$ to itself. Then,  if  two Stokes regions $U_i, i=1,2$   
share  a subset of $\kappa$,  then one of the  following two situations occur:

\medskip
Case 1. For $(1>j)$, 
the Borel sums of the  WKB-solutions $\Psi_{1,i}(z,\eta,z_k)$   and $\Psi_{j,i}(z,\eta,z_k)$ on $U_i$ continue  analytically  from $U_1$ to $U_2$ (and in the opposite direction  from $U_2$ to $U_1$). Moreover,  
$$
  \begin{cases}
  \Psi_{1,1}=\Psi_{1,2}\pm c_j\Psi_{1,2}(z,\eta) &  \\
   \Psi_{j,1}=\Psi_{j,2}. 
\end{cases} 
$$

\medskip
Case 2.   For $(1<j)$, one has  
$$
  \begin{cases}
  \Psi_{1,1}=\Psi_{1,2} &  \\
   \Psi_{j,1}=\Psi_{j,2}\pm c_j\Psi_{1,2}(z,\eta).
\end{cases}
$$
 Here $c_j$ 
is the ``\textcolor{blue}{\emph{alien derivative}}"  of $\psi_{1,B}$ in the sense of Ecalle (cf.  \cite{delpham99,sa07}) whose sign  $\pm$ correspond to the
counter-clockwise and clockwise crossing of $\kappa$ of the path of analytic continuation from $U_1$ to $U_2$ with the turning point as the reference point. (The number $c_j$ satisfies the equation   $\Delta_{y=y_0(z,z_k)}\psi_{j,B}(z,y)= c_j\psi_{1,B}(z,y)$).

\end{theorem}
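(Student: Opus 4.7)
The plan is to read the Stokes jump of the Borel sums across $\kappa$ directly from the Borel-plane picture supplied by Theorem~\ref{Main}. Part (a) of that theorem places the singularities of $\psi_{j,B}(z,y,z_k)$ in the $y$-variable at the points
\[
y = -y_0^{(l)}(z,z_k) := -\int_{z_k}^{z}w_l(\zeta)\,d\zeta,\qquad l=1,\dots,M,
\]
so that the Borel sum
\[
\Psi_j(z,\eta,z_k) = \int_{-y_0^{(j)}(z,z_k)}^{\infty} e^{-y\eta}\,\psi_{j,B}(z,y,z_k)\,dy
\]
runs along a contour parallel to $\RR_{+}$ issuing from $-y_0^{(j)}$. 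The defining relation of a type-$(1,j)$ Stokes curve, $\Im[y_0^{(1)}-y_0^{(j)}]=0$ on $\kappa$, is precisely the statement that the two singularities $-y_0^{(1)}$ and $-y_0^{(j)}$ are horizontally aligned when $z\in\kappa$; as $z$ crosses $\kappa$, $-y_0^{(1)}$ sweeps across the horizontal line through $-y_0^{(j)}$. The sign of $\Re[y_0^{(1)}-y_0^{(j)}]$, which defines the label $(1\gtrless j)$, decides whether this crossing occurs on the forward integration contour or behind its starting point.

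\medskip\noindent
In \textbf{Case 1} the label $(1>j)$ gives $\Re[-y_0^{(1)}]<\Re[-y_0^{(j)}]$, so $-y_0^{(1)}$ lies to the left of the starting point of the contour of $\Psi_j$ and does not obstruct its deformation as $z$ crosses $\kappa$ in the chosen orientation. Using the hypothesis that $\kappa$ is simple and tends to $\infty$ (so that no other turning point or loop produces an additional Borel singularity on the contour), I will conclude that $\Psi_j$ extends analytically across $\kappa$, giving $\Psi_{j,1}=\Psi_{j,2}$. The identity $\Psi_{1,1}=\Psi_{1,2}$ will then follow from the analogous deformation for $\Psi_1$: although the singularity $-y_0^{(j)}$ lies ahead of the starting point $-y_0^{(1)}$, the prescribed orientation makes this crossing inert (no residue contribution arises because the singularity is approached from the side opposite to the deformation).

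\medskip\noindent
In \textbf{Case 2} the label $(1<j)$ reverses the ordering: $\Re[-y_0^{(1)}]>\Re[-y_0^{(j)}]$, and $-y_0^{(1)}$ now lies on the forward half of the contour of $\Psi_j$ and sweeps through it as $z$ crosses $\kappa$. Realizing $\Psi_{j,1}$ and $\Psi_{j,2}$ as the two lateral Borel sums obtained by pushing the contour above and below $-y_0^{(1)}$, I will identify their difference with a Hankel-type integral encircling that singularity:
\[
\Psi_{j,1}(z,\eta,z_k)-\Psi_{j,2}(z,\eta,z_k) = \int_{\Gamma}e^{-y\eta}\,\psi_{j,B}(z,y,z_k)\,dy.
\]
By Theorem~\ref{Main}(b) the bicharacteristic strip of $\LL_B$ is non-singular away from $\mathcal{Z}$, so the microlocal propagation of singularities (cf.\ \cite[Cor.\,7.2.2]{DuHo94}) identifies the singularity of $\psi_{j,B}$ at $y=-y_0^{(1)}$ as cognate to the defining singularity of $\psi_{1,B}$ at the same point. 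Ecalle's alien calculus then supplies a scalar $c_j$ satisfying $\Delta_{y_0^{(1)}}\psi_{j,B}=c_j\,\psi_{1,B}$, turning the integral above into $c_j\Psi_{1,2}$, the sign of $c_j$ depending on the chosen orientation. The equality $\Psi_{1,1}=\Psi_{1,2}$ comes from the same analysis applied to $\Psi_1$: its contour starts at $-y_0^{(1)}$, and here $-y_0^{(j)}$ sits \emph{behind} it ($\Re[-y_0^{(j)}]<\Re[-y_0^{(1)}]$), so no singularity crosses the contour.

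\medskip\noindent
The main obstacle will be the rigorous justification of the alien-calculus step in the present pole-type setting. The Voros--Ecalle theory is classically formulated for simple ordinary turning points, whereas the exponents here are defined by Hadamard finite parts; this introduces logarithmic corrections in the local expansions of $\psi_{j,B}$ near each $-y_0^{(l)}$ that must be shown not to disturb the simple cognate structure required for the identity $\Delta_{y_0^{(1)}}\psi_{j,B}=c_j\psi_{1,B}$. The key input that lets the classical argument go through is Theorem~\ref{Main}(b), which guarantees that no auxiliary singularities appear on the bicharacteristic curves, so propagation stays governed by a single cognate branch and the Voros--Pham--Delabaere connection formula adapts with only bookkeeping modifications to accommodate the finite-part regularization.
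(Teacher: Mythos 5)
Your Borel-plane contour analysis reproduces the paper's first step (deforming the integration path of $\psi_{1,B}$ and isolating a Hankel-type contribution $\int_{\gamma_0}e^{-y\eta}\psi_{1,B}\,dy$ that produces a jump of the form $\Psi_{1,1}\mapsto\Psi_{1,2}+c_j\Psi_{j,2}$). But your treatment of Case~1 has a genuine gap exactly where the proof becomes non-standard. When $(1>j)$ the label means $\Re[-y_0^{(1)}]<\Re[-y_0^{(j)}]$, so the cognate singularity at $-y_0^{(j)}$ sits \emph{on} the forward half of the contour of $\Psi_1$, not behind it, and the naive Voros picture for a simple turning point would therefore make $\Psi_1$ jump (this is precisely the familiar $\Psi_+\mapsto\Psi_++i\Psi_-$ formula for the dominant solution). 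Your claim that ``no residue contribution arises because the singularity is approached from the side opposite to the deformation'' asserts, without proof, that the constant $c_j$ vanishes; but nothing in the contour geometry or in the microlocal propagation of singularities forces $c_j=0$, and the theorem's assertion $\Psi_{1,1}=\Psi_{1,2}$ in Case~1 is in fact anomalous from the point of view of ordinary WKB analysis.

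The paper closes this gap with an argument you do not have: it invokes the eigenpolynomial identity $Q^{\MM}_n(z)=\Psi_1\bigl(z,\sqrt[M]{\lambda_n},z_k\bigr)\,r\bigl(z,\sqrt[M]{\lambda_n}\bigr)$ from \cite[Lem.~11]{BoMo22} on a compact set $K$ free of zeros of $Q^\MM_n$. Since the eigenpolynomial is an entire function, it cannot exhibit a Stokes discontinuity; substituting the putative jump $\Psi_{1,1}\mapsto\Psi_{1,2}+c_j\Psi_{j,2}$ into the identity forces $c_j=0$. This is the specific feature of exactly solvable operators that makes Case~1 inert, and it is not something you can retrieve from the alien-calculus machinery or from Theorem~\ref{Main}(b) alone. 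Your plan of ``bookkeeping modifications'' to the Voros--Pham--Delabaere formula would therefore produce the classical $\Psi_1\mapsto\Psi_1+c_j\Psi_j$ with a possibly non-zero $c_j$, contradicting the statement. You need to add the global-analyticity argument via the eigenpolynomials (or an equivalent input that pins down $c_j$) before the proof is complete.
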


\begin{remark}{\rm 
 In  \cite{Ko00}, it is studied WKB-solutions of    the differential equation  
\begin{eqnarray}\label{Koik}
\frac{d^2 u}{dz^2}-\eta^2\left(\frac{Q_0(z)}{z}+ \frac{1}{\eta^2} \frac{Q_2(z)}{z^2}   \right)u=0,
\end{eqnarray}
where $Q_0(z)$ and $Q_2(z)$ are functions analytic  near $0$ and satisfying the condition $Q_0(0)\neq 0$ and $\eta$  denotes a large parameter.  By assuming the Borel summability of the WKB solutions  of  the equation  \eqref{Koik}, the author obtained  the following connection formula   for the Borel sums $\Psi_+$ and $\Psi_-$:
  
  \medskip\noindent
  
   If $\Re[\int_0^z\sqrt{\frac{Q_o(t)}{t}}dt]>0$ along the Stokes curve, one has 
\begin{eqnarray*}
  \begin{cases}
  \Psi_+= \Psi_+ + 2\imath \cos(\pi \sqrt{1+4Q_2(0)})Y_-\\
  \Psi_-=\Psi_-,  
\end{cases} 
\end{eqnarray*}
when we cross the Stokes curve in the counterclockwise direction relative to $z = 0$. 

Similarly, 
 if $\Re[\int_0^z\sqrt{\frac{Q_o(t)}{t}}dt]<0$ along the Stokes curve, one has 
\begin{eqnarray*}
  \begin{cases}
  \Psi_+= \Psi_+ \\
  \Psi_-=\Psi_-+ 2\imath \cos(\pi \sqrt{1+4Q_2(0)})\Psi_+,
\end{cases} 
\end{eqnarray*}
when we cross the Stokes curve in the counterclockwise direction relative to  $z = 0$.

Notice that \eqref{Koik} is,  when $Q_2(z)=0$ and $\dsty Q_0(z)=\frac{1}{z-z_0}, z_0\neq 0$,  a  differential equation of the form \eqref{Eq1} for $M=2$. Therefore, the connection coefficient $c_j$ of Theorem \ref{VTPE} for this case  reduces to $c_2=\pm 2\imath$, whose sign depends on the  counterclockwise or clockwise  direction relative to  $z = 0$

}
\end{remark}

 \begin{theorem}\label{Glob}
 For $M=3$, assume that all three zeros of $\rho_3(z)$ are simple. Then,
 \begin{itemize}
 \item[a)] for each zero $z_k$ of   $\rho_3(z)$,  there are three Stokes curves emanating from it and they are of the types $(1,2), (1,3)$ and $(2,3)$.  Moreover, the curve $(2,3)$ might be  a closed Jordan curve crossing $\supp {\mu^\MM}$ and the curves  $(1,2), (1,3)$ are the Jordan arcs   connecting  $z_k$ with $\infty$, see Fig.~\ref{f_example} for an example;

 \item[b)]  There exist  new Stokes curves  emanating from the intersections of the (initial)  Stokes curves  defining ordered  crossings. 
\end{itemize}

\end{theorem}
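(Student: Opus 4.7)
We combine a local analysis at each simple turning point $z_k\in\mathcal{Z}$ with the asymptotic behavior of the WKB integrals at infinity and the geometry of $\supp{\mu^\MM}$ from Lemmas \ref{char} and \ref{angles}. Near a simple zero $z_k$ of $\rho_3$, one has $\rho_3(z)=\rho_3'(z_k)(z-z_k)(1+o(1))$ and, with $\omega=e^{2\pi i/3}$, the three branches of $1/\sqrt[3]{\rho_3(z)}$ satisfy $w_j(z)\sim \omega^{j-1}/\sqrt[3]{\rho_3'(z_k)(z-z_k)}$, so
$$\int_{z_k}^z(w_j-w_{j'})\,d\xi\;=\;\tfrac{3}{2}\,(\omega^{j-1}-\omega^{j'-1})\,\rho_3'(z_k)^{-1/3}(z-z_k)^{2/3}(1+o(1)).$$
Working on the disk around $z_k$ slit along the arm $\mathfrak{r}_k\subset \supp{\mu^\MM}$ and using that the cyclic monodromy $w_1\mapsto w_2\mapsto w_3$ around $z_k$ permutes the type-labels, one obtains exactly one Stokes arc of each type $(1,2),(1,3),(2,3)$ emerging from $z_k$.

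For the global part of (a), the expansion $w_j(z)\sim \omega^{j-1}/z$ near $\infty$ gives $\int(w_j-w_{j'})\,d\xi=(\omega^{j-1}-\omega^{j'-1})\ln z+O(1)$. Since $\omega-\omega^2=i\sqrt{3}$ is purely imaginary, $\Im\!\int(w_2-w_3)\,d\xi=\sqrt{3}\ln|z|+O(1)\ne 0$ for large $|z|$, so the $(2,3)$-curve is bounded and must terminate at a turning point. The $120^\circ$ angle condition at the central vertex $v$ (Lemma \ref{angles}) together with the local classification of Stokes directions at every $z_l$ rules out a connection between distinct zeros, forcing a loop at $z_k$; a monodromy-winding computation (the $\mathbb{Z}/3$ action of the branches is incompatible with a contractible loop in $\Omega\cup\{z_k\}$ disjoint from $\supp{\mu^\MM}$) then forces this loop to cross $\supp{\mu^\MM}$. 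For $j=2,3$, by contrast, $\Re(1-\omega^{j-1})=3/2\ne 0$ and the equation $\Im[(1-\omega^{j-1})\ln z]=0$ has solutions $\arg z=-\tfrac{2}{3}\Im(1-\omega^{j-1})\ln|z|$ going to infinity along logarithmic spirals, yielding the two arcs of types $(1,2)$ and $(1,3)$.

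For (b), along a $(1,j)$-arc the same asymptotic gives $\Re\!\int_{z_k}^z(w_1-w_j)\,d\xi=\tfrac{3}{2}\ln|z|-\Im(1-\omega^{j-1})\arg z+O(1)=2\ln|z|+O(1)\to +\infty$ after substituting the spiral equation above; hence in the sense of Notation \ref{LbSt} these curves carry the label $(1>j)$, and Corollary \ref{CoTh2} gives inertness. For (c), the only ordered crossings permitted by the Stokes pattern just established are between a $(1,2)$-arc and a $(2,3)$-loop (and the mirror case $3>2>1$), and the resulting new Stokes curve is of type $(1,3)$; the same asymptotic sign computation, applied from the crossing point $C$ rather than from a turning point, assigns the label $(1>3)$, and since the proof of Corollary \ref{CoTh2} depends only on the sign of the real part of the Borel integrand, it carries over verbatim to these new curves.

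\textbf{Main obstacle.} The delicate point is the global topology in (a): showing that the $(2,3)$-Stokes curve from $z_k$ actually closes up into a simple loop at $z_k$, rather than connecting two distinct zeros, spiraling indefinitely, or limit-cycling in a bounded subregion of $\Omega$. This seems to require combining the classification of Stokes directions at every $z_l\in\mathcal{Z}$ via Lemma \ref{angles} with a monotonicity argument for the harmonic function $\Re\!\int(w_2-w_3)\,d\xi$ along the level set $\Im\!\int(w_2-w_3)\,d\xi=0$, and real-analyticity of this level set to exclude accumulation in the bounded region it would otherwise enter. Once this topology is secured, parts (b) and (c) reduce to the sign/asymptotic computations above and applications of Corollary \ref{CoTh2}.
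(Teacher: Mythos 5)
Your route is genuinely different from the paper's: you argue entirely in the $z$-plane via local expansions at the simple zero $z_k$ and logarithmic asymptotics at $\infty$, whereas the paper transfers everything to the $\mathfrak{B}$-region $\mathcal{F}(\mathcal{R})=\int^z w_1$, where by Lemmas \ref{homeo} and \ref{angles} the boundary is a polygon with the six sides alternating directions at $120^\circ$, so that the sides $I_1,I_3,I_5$ are mutually parallel and likewise $I_2,I_4,I_6$. In that picture the Stokes curves of each type $(j,j')$ are simply pullbacks under $\mathcal{F}_{(j,j')}$ of the horizontal lines $\Im=0$ in the rotated regions $\mathfrak{B}_{(j,j')}=e^{\mp i\pi/6}\mathfrak{B}$, $e^{i\pi/2}\mathfrak{B}$, and the global topology of part (a) is read off directly from elementary plane geometry of a polygon. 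This is exactly what resolves the ``main obstacle'' that you flag and do not close: your argument that the $(2,3)$-curve actually closes into a simple Jordan loop at $z_k$, rather than spiraling, limit-cycling, or hitting another turning point, is left at the level of a plausibility sketch (``monodromy-winding incompatibility,'' ``monotonicity plus real-analyticity''), and you say yourself that the topology is not yet secured. Since both (b) and (c) rely on the global shape of the Stokes complex established in (a) (for instance, Theorem \ref{VTPE} needs the hypothesis that $\kappa$ runs from $z_k$ to $\infty$ without hitting another turning point), this is the central gap.

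There is a second, independent gap in part (c). You propose to assign the label $(1>3)$ to the new Stokes curve emanating from an ordered crossing $C$ and then invoke Corollary \ref{CoTh2} ``verbatim.'' But Corollary \ref{CoTh2} is a corollary of Theorem \ref{VTPE}, whose hypotheses require the Stokes curve to emanate from a turning point $z_k\in\mathcal{Z}$, and whose proof hinges on the eigenpolynomial identity $Q_n^\MM=\Psi_1(\cdot,\sqrt[M]{\lambda_n},z_k)r(\cdot,\sqrt[M]{\lambda_n})$ together with the structure of the alien derivative $\Delta_{y=-\alpha_j y_0(z,z_k)}\psi_{1,B}$ computed at the singularity coming from $z_k$. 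Transporting this statement to a curve emanating from a crossing point, not a turning point, requires re-deriving the connection formula in that setting; the paper does \emph{not} do this, and instead proves (c) by a path-deformation argument around $\omega_i$: using the inertness of the $(1,2)$-curve already obtained in (b), comparing the two paths $\gamma_\pm$ around the crossing forces $\psi_1\mapsto\psi_1$, hence no Stokes phenomenon on the brown curve. Relatedly, you compute labels in $\Omega$, but the paper explicitly notes that the label $2\lessgtr 3$ of the $(2,3)$-loop is not globally well defined in $\Omega$ (it is a loop), which is another reason the authors move to the $\mathfrak{B}$-picture for (c). Your asymptotic sign computations at $\infty$ (both for (a) and (b)) are correct, and the overall plan is reasonable, but both issues above would have to be repaired before this constitutes a proof.
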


As a consequence of the geometry of the Stokes complex  we obtain a description of $\supp {\mu^\MM}$ for $M=3$. 

\begin{coro}\label{CoTh3}
 For $M=3$,    assume that the zeros of $\rho_3$ are simple. Then, 
 $$\supp {\mu^{\MM}}=\bigcup_{k=1}^3\left(\left\{\Re\left[\int_{z_k}^z(w_{1}(\xi)-w_{2}(\xi))d\xi\right]=0\right\}\bigcap \left\{\Im\left[\int_{z_k}^z(w_{1}(\xi)-w_{2}(\xi))d\xi\right]\geq 0\right\}\right). $$

\end{coro}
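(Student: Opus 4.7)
The plan is to match each of the three Jordan arcs $\mathfrak{r}_1, \mathfrak{r}_2, \mathfrak{r}_3$ composing $\supp \mu^\MM$ (see Fig.~\ref{supp}) with one of the three subsets $S_k := \{\Re F_k = 0\} \cap \{\Im F_k \geq 0\}$ on the right-hand side, where $F_k(z) := {\tt Fp}\int_{z_k}^z (w_1(\xi) - w_2(\xi))\,d\xi$. Proving $\mathfrak{r}_k = S_k$ for each $k$ and then taking the union yields the corollary.

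First I will invoke Lemma~\ref{char}, which characterizes $\supp\mu^\MM$ as the locus where two analytic continuations of the asymptotic exponent $\Phi_0$ (whose derivative is $w_1$) produce coinciding real parts. The global branch relation $w_j = e^{2\pi i(j-1)/3} w_1$ on $\Omega$ turns this into a condition of the form $\Re[\int (w_1 - w_j)\,d\xi] = 0$. A $\mathbb{Z}_3$-symmetry argument (using the three cube-root rotations and the fact that $w_1$ is globally normalized by its asymptotic $1/z$ at $\infty$) shows that the pair $(w_1,w_2)$ is the correct choice for each $\mathfrak{r}_k$ once orientation is fixed, giving the inclusion $\mathfrak{r}_k \subset \{\Re F_k = 0\}$.

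Next I analyze $F_k$ locally at the turning point $z_k$. Since $w_1(z) \sim c(z-z_k)^{-1/3}$, integration gives $F_k(z) \sim \tilde{C}(z-z_k)^{2/3}$, so the local level set $\{\Re F_k = 0\}$ splits into three arcs emanating from $z_k$ at mutual angles of $120^\circ$. By Lemma~\ref{angles}, exactly one of these is tangent to $\mathfrak{r}_k$; the other two continue into $\Omega$. Matching the angular direction of $\mathfrak{r}_k$ at $z_k$ against the three values of $\arg(z-z_k)$ that solve $\arg F_k = \pm\pi/2$, I verify that $\Im F_k > 0$ immediately after leaving $z_k$ along $\mathfrak{r}_k$, while the other two local branches have $\Im F_k < 0$ there; the positivity cut then excludes the spurious branches.

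Finally, because $w_1 - w_2 = \sqrt{3}\,e^{-i\pi/6}\,w_1$ has no zeros in $\Omega$, $F_k$ is a local biholomorphism on $\Omega_\tau$, so $\Im F_k$ is strictly monotone along $\mathfrak{r}_k$; together with the correct sign at the starting point this yields $\Im F_k \geq 0$ throughout $\mathfrak{r}_k$, establishing $\mathfrak{r}_k \subset S_k$. The reverse inclusion follows from the tree structure of $\supp\mu^\MM$ (the component of $\{\Re F_k = 0\} \cap \{\Im F_k \ge 0\}$ through $z_k$ must terminate at the trivalent vertex $v$, the only other point where the branch of $w_1$ can change type). The main obstacle will be the angular-matching step: aligning the global branch convention of \eqref{wj} (fixed by normalization at $\infty$) with the local emanation direction of $\mathfrak{r}_k$ at $z_k$ provided by Lemma~\ref{angles}. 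Once that alignment is verified, the union over $k=1,2,3$ gives the corollary.
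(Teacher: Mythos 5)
Your approach is genuinely different from the paper's. The paper derives Corollary~\ref{CoTh3} in one line from the global conformal-mapping picture built in Lemmas~\ref{homeo}--\ref{angles}: the map $\mathcal{F}_{(1,2)}$ sends the Riemann surface of $\int^z w_1$ homeomorphically onto the region $\mathfrak{B}_{(1,2)}=(1-e^{2\pi\imath/3})\mathfrak{B}=\sqrt{3}\,e^{-\imath\pi/6}\mathfrak{B}$, and the piecewise-linear boundary $\partial\mathfrak{B}$ computed in the proof of Theorem~\ref{Glob} (angles $\tfrac{2\pi}{3},\tfrac{4\pi}{3},\dots$) becomes, after the rotation by $e^{-\imath\pi/6}$, a zigzag whose vertical segments lie in $\{\Im\geq 0\}$; reading off Figure~\ref{fi1} identifies those vertical segments with $\supp\mu^{\MM}$. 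You instead attempt a direct local argument arc by arc. This is a legitimate alternative strategy, but as written it has concrete gaps.

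First, the local analysis at the turning point $z_k$ is wrong. With $w_1(z)\sim c\,(z-z_k)^{-1/3}$ one has $F_k(z)\sim\tilde C\,(z-z_k)^{2/3}$, and for $\{\Re[(z-z_k)^{2/3}]=0\}$ the solutions of $\cos\!\left(\tfrac{2\theta}{3}+\phi_0\right)=0$ are spaced $\tfrac{3\pi}{2}=270^{\circ}$ apart in $\theta$, so within an angular range of length $2\pi$ (a slit disk around $z_k$ with the cut along $\mathfrak{r}_k$) there are at most two rays, not three at $120^{\circ}$. Three arcs at $120^{\circ}$ would be the picture for a \emph{simple ordinary} turning point (exponent $3/2$), but here the turning points are pole-type with exponent $2/3$, which is precisely the non-generic feature emphasized throughout the paper. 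Your "angular-matching" step, which you flag as the main obstacle, therefore rests on the wrong local model; the positivity cut $\{\Im F_k\geq 0\}$ must be shown to discard exactly the single interior branch (where $\arg F_k=-\pi/2$), not "the other two."

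Second, for the reverse inclusion you argue only about the connected component of $S_k=\{\Re F_k=0\}\cap\{\Im F_k\geq 0\}$ passing through $z_k$. Since $w_1-w_2=(1-e^{2\pi\imath/3})w_1$ never vanishes in $\Omega$, the level set $\{\Re F_k=0\}$ is a smooth $1$-manifold, but it may have several connected components inside $\Omega$ (and near the other turning points $z_l$, $l\neq k$), and nothing in your sketch rules out extra components in $S_k\setminus\mathfrak{r}_k$. This is exactly the kind of global information that the paper extracts all at once from the homeomorphism $\mathcal{F}_{(1,2)}:\mathcal{R}_{(1,2)}\to\mathfrak{B}_{(1,2)}$ of Lemma~\ref{Complete}: since the image is the single simply connected polygonal region $\mathfrak{B}_{(1,2)}$, the preimage of the positive imaginary axis is one arc and there is nothing extraneous to exclude. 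If you wish to avoid the $\mathfrak{B}$-region machinery, you would need a separate global count of the components of $\{\Re F_k=0\}$. Finally, the appeal to Lemma~\ref{char} is imprecise: that lemma gives absolute continuity, the tangent-vector formula, and the valence bound, not a characterization of $\supp\mu^{\MM}$ as a locus of coinciding real parts; the fact that $\Phi_0$ maps the edges to straight segments comes from \cite[Lem.~4]{BerRull02}, quoted inside Lemma~\ref{homeo}.
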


\section{Technical results}\label{PR}

To study the  geometry of the Stokes complex we need   to understand, for each $ 1 \leq j<j^{\prime} \leq M$,  the structure of the  complete analytic function  defined by  $\int_{z_k}^z(w_j(t)-w_{j^\prime}(\xi))d\xi$, where $z_k$ is an ordinary turning point of \eqref{OPer}. Notice that  
\begin{eqnarray} \label{RootR}
\int_{z_k}^z(w_{j}(\xi)-w_{j^{\prime}}(\xi))d\xi=  (e^{\frac{2(j-1)\pi\imath}{M}}-e^{\frac{2(j^{\prime}-1)\pi\imath}{M}})\int_{z_k}^zw_{1}(\xi)d\xi, \quad  z\in \Omega, 1 \leq j<j^{\prime} \leq M.
\end{eqnarray}

To construct the Riemann surface we proceed as follows. 
Take a branch cut in $\Omega$ consisting of $\supp {\mu^\MM}$ and  a Jordan arc $\tau$ connecting  $\infty$ and a point  $p_0\in \supp {\mu^\MM}$. For  a small disk $D\subset \Omega$  such that $\tau\cap D=\emptyset$, define  the  function element
\begin{eqnarray}\label{primitive}
b_1(z;z_k)= \begin{cases}
\int_{z_k}^z w_1(\xi)d\xi,  z\in D,\quad \rho_M(z)\neq (z-z_1)^M; \\
 \ln (z-z_1), z\in D,\quad  \rho_M(z)= (z-z_1)^M.
\end{cases}
\end{eqnarray}
For $z\in \Omega_{\tau}$,  define  $b_1(z;z_k)$  as the analytic continuation  to $\Omega_{\tau}$ of the function element $(b_1,D)$. 

\medskip
Further, denote by   $(\mathcal{R},\rho)$  the Riemann surface of  the complete analytic function  $\mathcal{F}$  obtained from the function element  $(b_1,D)$,  where $\rho(\mathcal{R})=\Omega$ is the projection map and $\Omega$ is the  base space,  cf. \cite[Defs. 2.7 p.215 \& 5.14 p.232]{conw84}. Take $\omega\in \supp {\mu^\MM}$  and    a Jordan arc  $\tau$   connecting   $\infty$  and $\omega$   such that $\tau\cap D=\emptyset$, where $\tau$ is oriented so that $\omega$ is the endpoint.  Set 
 \begin{eqnarray}
\begin{split}\label{Bsets}
\mathfrak{B}_0:= & b_1(\Omega_{\tau};z_k)\cup b^+_{1}(\tau;z_k), \text{ where } b^{+}_{1}(z_0;z_k)=\lim_{\begin{subarray}{c}
                           z\rightarrow z_0\\
                           z\in \tau^{+}
                          \end{subarray}}b_1(z;z_k), z_0\in \tau,\\
 \mathfrak{B}_k:=&\mathfrak{B}_0+2k\pi\imath \text{  and  }  \mathfrak{B}:=\bigcup\limits_{k\in \ZZ}\mathfrak{B}_k,
 \end{split}
\end{eqnarray}
and  the sum is understood in the sense $S+z=\{s+z:s\in S\}$.

\begin{nota}\label{boundary}
For $\omega\in \supp {\mu^\MM}$, let  $ \tau$ be  a Jordan arc connecting $\infty$ with $\omega$ and oriented towards the endpoint $\omega$. Let  $\mathfrak{G}=(b_1, \Omega_\tau)$ be a function element, and $\mathfrak{B}_0$ be as in \eqref{Bsets}.  Denote by $\dsty \mathfrak{b}_0$ the component of $\partial \mathfrak{B}_0$ contained in $\partial \mathfrak{B}$.  
 
\end{nota}

 \begin{lemma}\label{homeo}
 Let    $(\mathcal{R},\rho)$ be  the Riemann surface of the complete analytic  function $\mathcal{F}$ with the base  $\Omega$,   where $\rho(\mathcal{R})=\Omega$ is the projection map.    Then the following properties are valid: 
 \begin{itemize}
\item[a)] The map
\begin{eqnarray*}
\mathcal{F}: \mathcal{R}& \longrightarrow & \CC \\
(z,[\phi]_z) & \longrightarrow & \phi(z),
\end{eqnarray*}  
  defines a homeomorphism between  $\mathfrak{R}$ and $\mathfrak{B}$.
 
  \item[b)] The definition of the set $\mathfrak{B}$ given by \eqref{Bsets} is independent both of the choice of a point $\omega\in \supp {\mu^\MM}$ and of  the choice of a  Jordan arc connecting $\omega $ and $\infty$.  
  
   \item[c)]  If  $\rho_M(z)\neq (z-z_1)^M$ and $\mathfrak{b}_0$ is as in Notation \ref{boundary}, then  the simply connected $\mathfrak{B}$-region     is bounded by  the  polygonal curve $\partial \mathfrak{B}= \mathfrak{b}_0+2k\pi\imath, k\in \ZZ$  so that  $\mathfrak{B}$ is to the right of  $\partial \mathfrak{B}$ (where $\partial\mathcal{B}$ is oriented  from $-\imath\infty$ to $+\imath\infty$). If $\rho_M(z)= (z-z_1)^M$ then $\mathfrak{B}$ coincides with $\CC$.

\end{itemize} 
 
\end{lemma}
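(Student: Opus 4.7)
The plan rests on the observation that $w_1$ is a single-valued, non-vanishing holomorphic function on $\Omega$ with $w_1(z) = 1/z + O(1/z^2)$ at infinity, while $\Omega$ is the complement of the embedded tree $\supp{\mu^{\MM}}$ in $\CC$ and hence has fundamental group $\ZZ$; by the residue theorem the unique period of $w_1$ along its generator is $2\pi\imath$, so $\mathcal{R}$ is the infinite cyclic cover of $\Omega$ on which $b_1$ becomes single-valued.

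For a), I would realize $\mathcal{R}$ as a disjoint union of sheets $\mathcal{R}_k$ ($k \in \ZZ$), each homeomorphic to $\Omega_\tau$ under $\rho$, with adjacent sheets glued across $\tau$ and the value of $\mathcal{F}$ on $\mathcal{R}_k$ equal to $b_1 + 2k\pi\imath$. By construction $\mathcal{F}(\mathcal{R}_k) = \mathfrak{B}_k$, so surjectivity onto $\mathfrak{B}$ is immediate. Continuity of $\mathcal{F}$ is built into the analytic-continuation topology on $\mathcal{R}$, and local biholomorphy follows from $b_1' = w_1 \neq 0$. The genuine content is global injectivity, which I would reduce to two claims: (i) $b_1$ is univalent on the simply-connected $\Omega_\tau$; and (ii) the translates $\mathfrak{B}_k$ have pairwise disjoint interiors. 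For (i) I would use boundary analysis: since $w_1 \sim \rho_M^{-1/M}$ is integrable at each zero of $\rho_M$, the primitive $b_1$ extends continuously to the prime-end closure of $\Omega_\tau$, sending the cut $\tau$ to two arcs whose boundary values differ by $2\pi\imath$, the ideal end at $\infty$ to a single accessible point (because $b_1 = \ln z + O(1)$), and the support $\supp{\mu^{\MM}}$ to a connected curve. A direct tracing of the oriented boundary shows that $b_1|_{\partial \Omega_\tau}$ is a topological embedding onto a Jordan curve in $\CC\cup\{\infty\}$; the argument principle applied to $b_1 - w$ then yields exactly one preimage for every interior $w$. Claim (ii) follows because the two long sides of $\mathfrak{B}_0$ differ by exactly $2\pi\imath$, so the translates $\mathfrak{B}_k$ tile $\mathfrak{B}$ with boundary-only overlap.

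Part b) is a formal consequence of intrinsicity: the germ $(b_1, D)$ and the complete analytic function it generates are determined only by the local data, so $\mathcal{F}(\mathcal{R})=\mathfrak{B}$ is intrinsic as well. Equivalently, any two admissible cuts are ambient-isotopic in $\Omega\cup\{\infty\}$ relative to $\supp{\mu^{\MM}}$, and the lifted isotopy merely permutes the tiles $\mathfrak{B}_k$ without altering their union. Part c) then follows from the boundary description in a): in the Euler--Cauchy case $\rho_M = (z-z_1)^M$ we have $b_1 = \ln(z-z_1)$, the support degenerates to a single point, the component $\mathfrak{b}_0$ collapses, and $\mathfrak{B} = \CC$. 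In the generic case $\mathfrak{b}_0$ is precisely the $b_1$-image of $\supp{\mu^{\MM}}$ traced from one side, and its $2k\pi\imath$-translates assemble into the polygonal boundary $\partial \mathfrak{B}$; $\mathfrak{B}$ lies on its right under the stated orientation because crossing $\tau$ into $\tau^+$ corresponds to the $+2\pi\imath$ translation in the $b_1$-plane.

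The main obstacle is the boundary analysis underlying (i): one must identify the prime-end structure of $\Omega_\tau$ where $\tau$ attaches to $\supp{\mu^{\MM}}$ at $\omega$, and verify that the $b_1$-images of the two sides of the tree near $\omega$ fit together into a single Jordan arc. Once this is carried out carefully, the remaining assertions follow by direct inspection.
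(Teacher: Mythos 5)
Your proposal for part a) reduces injectivity of $\mathcal{F}$ to two sub-claims: (i) $b_1$ is univalent on $\Omega_\tau$, and (ii) the translates $\mathfrak{B}_k = \mathfrak{B}_0 + 2k\pi\imath$ have pairwise disjoint interiors. You propose to establish (i) by extending $b_1$ continuously to the prime-end closure of $\Omega_\tau$, verifying that this extension is a topological embedding onto a Jordan curve, and then applying the argument principle to $b_1 - w$. You candidly call this ``the main obstacle,'' and it is a genuine gap as stated: showing that $\mathfrak{b}_0$ together with the two boundary images of $\tau$ forms a \emph{simple} closed curve is essentially the same difficulty as univalence of $b_1$ itself, and nothing in your outline rules out global self-intersections of the polygonal curve $\mathfrak{b}_0$ (the local angle data at vertices controls only the local picture). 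Without that Jordan-curve fact, the argument-principle step is not available. Your justification of (ii) --- that the two long sides of $\mathfrak{B}_0$ differ by exactly $2\pi\imath$ and therefore the translates tile --- quietly relies on the same unverified boundary structure: a priori, $\mathfrak{B}_0$ could protrude past $\mathfrak{B}_0 + 2\pi\imath$ along the tree image, and then the interiors would overlap even though the two $\tau$-sides are $2\pi\imath$ apart.

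The paper's proof of a) avoids the boundary geometry altogether. After recording (via the period $2\pi\imath$ of $w_1$ around $\supp\mu^{\MM}$) that every germ of $\mathcal{F}$ over a cut domain $\Omega_\nu$ has value $b_1 + 2k\pi\imath$, it tests injectivity directly: if $\Phi(z_1) = \Phi(z_2) + 2m\pi\imath$ with $m\ne 0$, it closes the path $[z_1,z_2]_\gamma$ by an arc $[z_2,z_1]_{\gamma'}\subset\Omega$ winding once around $\supp\mu^{\MM}$ and concludes from the period relation that the primitive $\int w_1$ would vanish along $\gamma'$, contradicting injectivity of that primitive on open connected subsets of $\Omega$ (a fact it invokes from \cite{BerRull02}); the case $m = 0$ is handled by the same injectivity. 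The homeomorphism then follows from the open-mapping theorem on Riemann surfaces. This route needs only the period computation and the cited univalence, not the Jordan structure of $\mathfrak{b}_0$, which the paper establishes only afterwards in part c) as a \emph{consequence} of a) and b). Parts b) and c) of your proposal are essentially aligned with the paper --- b) by intrinsicity of the complete analytic function, c) by the straightening lemma from \cite{BerRull02} and the explicit $\ln(z-z_1)$ computation in the Euler--Cauchy case --- but your route to a) is harder than the paper's and leaves its key step unproved.
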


 \proof   
a)  Let    $[b_1]_z$ be a germ of $b_1$ at $z\in D$. Obviously,  any other germ  $[\phi]_z$   can be obtained by analytic continuation of $[b_1]_z$. Hence, if $D^{\prime}\subset \Omega$ is any simply connected subset  and  $(\Phi,D^{\prime})$ is an element in   $[\phi]_z$,  then 
\begin{eqnarray}\label{germ}
\Phi(z)=\int_{\Gamma} \frac{d\xi}{\sqrt[M]{\rho_M(\xi)}}+ b_1(z;z_k)=2n\pi\imath+ b_1(z;z_k),
\end{eqnarray}
where $\Gamma$ is a closed curve encircling both  $\supp {\mu^\MM}$ and  $z$, while the number $n=n(\infty,\Gamma)$ equals the  index (i.e. the winding number) of $\Gamma$   with respect to $\infty$. Since $\sqrt[M]{\rho_M(\xi)}\sim \xi$ as $\xi\to\infty$, the differential
$\frac{d\xi}{\sqrt[M]{\rho_M(\xi)}}$ has residue $1$ at infinity; therefore, the integral over
$\Gamma$ equals $2\pi i$ times the winding number of $\Gamma$ around $\infty$.

   Consider a cut in $\Omega$ defined by any Jordan arc $\nu$ connecting  $\infty$ and   $z_0\in \supp {\mu^\MM}$. Using  \eqref{germ}, we  have that  for  any two  function elements $(\Phi,\Omega_\nu)$  and $(\Psi,\Omega_\nu)$  contained in the respective germs $[\phi]_z$ and  $[\psi]_z$ of the complete analytic function  $\mathcal{F}$  obtained from $(b_1,D)$,  
\begin{eqnarray}\label{im0}
\Phi(z)=\Psi(z)+2k\pi\imath, \quad z\in \Omega_\nu,
\end{eqnarray}
for some $k\in \ZZ$.

\medskip
Denote by $B=Im[\mathcal{F}]=\bigcup\limits_{(z,[\phi]_z)\in \Omega\times \mathcal{F} }\mathcal{F}(z,[\phi]_z)  $ the image of $ \mathcal{R}$ under the map $\mathcal{F}$.  Observe that $\mathcal{F}$ is  injective. Indeed, let $(p_1,[\phi]_{p_1}), (p_2,[\psi]_{p_2})\in \mathcal{R}$ be such that $(p_1,[\phi]_{p_1})\neq (p_2,[\psi]_{p_2})$. We can have three alternatives: 
\begin{itemize}
\item $p_1\neq p_2, [\phi]_{p_1} \neq [\psi]_{p_2}$;
\item  $p_1\neq p_2$  and  $[\phi]_{p_1} =[\psi]_{p_2}$;
\item  $p_1=p_2$ and  $[\phi]_{p_1}\neq [\psi]_{p_1} $.
\end{itemize}
Notice that the elements in the second alternative do not belong to the domain of the map
$\mathcal F$, since in a complete analytic function a germ is always attached to a unique base point; in particular, distinct base points cannot carry the same germ, cf.\ \cite[Def.~2.1, p.~214]{conw84}. 

 For a given pair $p_1,p_2\in \Omega$, consider a cut in $\Omega$  by taking a Jordan arc $\nu$ connecting  $\infty$  and a point of  $\supp {\mu^\MM}$  and such that $p_1,p_2\in \Omega_\nu$.
 
Suppose that  $p_1\neq p_2, [\phi]_{p_1} \neq [\psi]_{p_2}$,  and $\mathcal{F}(p_1,[\phi]_{p_1})= \mathcal{F}(p_2,[\psi]_{p_2})$. Let us choose   $(\Phi,  \Omega_\nu)\in [\phi]_{p_1}$ and $(\Psi,\Omega_\nu)\in [\psi]_{p_2}$. Using \eqref{im0}  we obtain
$$\Phi(p_1)=\Phi(p_2)+2m\pi\imath, \quad m\in \ZZ\setminus\{0\},$$
hence
\begin{eqnarray}\label{pathenc}
\int_{[p_1,p_2]_{\gamma}}  \frac{d\xi}   {m\sqrt [M]  {\rho_M(\xi)} } =2\pi\imath,
\end{eqnarray}
where  $[p_1,p_2]_{\gamma}$ is the path  connecting the points $p_1$ and $p_2$ by the arc $\gamma$   such that $[p_1,p_2]_{\gamma}\subset \Omega_\nu$; here we take the branch of $\sqrt[M]{\rho_M}$ defined in $\Omega_\nu$ satisfying
$\sqrt[M]{\rho_M(\xi)}\sim \xi$ as $\xi\to\infty$. Therefore, if $[p_2,p_1]_{\gamma^{\prime}}\subset \Omega$ is a path such that $[p_1,p_2]_{\gamma}\cup [p_2,p_1]_{\gamma^{\prime}}$ encloses $\supp {\mu^\MM}$ and satisfies  $n(\infty,[p_1,p_2]_{\gamma}\cup [p_2,p_1]_{\gamma^{\prime}})=1$, then from \eqref{pathenc} we get
\begin{eqnarray}\label{inj}
\int_{[p_2,p_1]_{\gamma^{\prime}}} \frac{d\xi}  {\sqrt[M]{\rho_M(\xi)}}= 0.
\end{eqnarray}
Take now a segment $\tau$  from an arbitrary  zero of $\rho_M$ to $\infty $ in such a way  that $p_2$ and $p_1$  are not over this segment and construct a region $U, \partial U\subset \Omega_{\tau}$, whose boundary is  defined by the Jordan arcs $\gamma_1$ and $\gamma_3$ sufficiently close to $\gamma$; $\gamma_2$ encloses $\supp\mu$  and is also sufficiently close to $\supp \mu$, and the arc $\gamma$ is near $\infty$, as shows Figure \ref{U_exa}.  By  \cite[Lem.4]{Berk82}, the map $\dsty g(z;p_2)=\int_{[p_2,z]_{\gamma^{\prime}}}\frac{d\xi}{\sqrt[M]{\rho_M(\xi)}}$ transforms each edge of $\supp \mu$ to straight lines and   $g(\tau^{+};p_2), g(\tau^{-};p_2)$ are disjoints   arcs  without  self-intersections.  Hence, $g(z;p_2)$ is injective in  the part of  boundary of $U$ limited by the arcs $\gamma_1,\gamma_2$, and $\gamma_3$. Since $\gamma$ is near the $\infty$ point,  by using the approximation $\dsty \frac{1}{\sqrt[M]{\rho_M(z)}}\sim \frac{1}{z}$ we can also show   that  $g(z;p_2)$ is also injective over $\gamma$. Since $g$ is continuous on $\overline{U}$, holomorphic in $U$, and injective on $\partial U$,
by \cite[Th.~4.5, p.~118, Vol.~II]{Mark65} it is injective in $U$; therefore,
\eqref{inj} is not possible.

  \begin{figure}[h!]
\centering
        \includegraphics[width=0.3\textwidth]{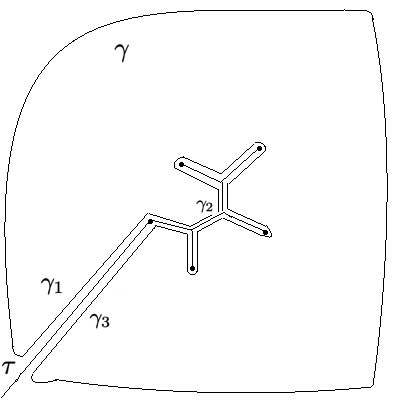}
         \caption{The  region $U$ }\label{U_exa}
\end{figure}

\medskip
Assume now  that  $p_1=p_2$ and  $[\phi]_{p_1}\neq [\psi]_{p_1} $ and take   $(\Phi, \Omega_{\nu})\in [\phi]_{p_1}$ and $(\Psi, \Omega_{\nu})\in [\psi]_{p_1}$. By  \eqref{im0} one has that $\Phi(p_1)\neq \Psi(p_1)$, i.e.  $\mathcal{F}(p_1,\phi(p_1)) \neq \mathcal{F}(p_2,\psi(p_2))$.  Therefore, we conclude that $\mathcal{F}$ is an injective map. 
  
  \medskip
  Now, the openness of non-constant holomorphic maps between Riemann surfaces implies  that $\mathcal{F}$ is an open map, see \cite[Th.6.14 p.238]{conw84}. Hence, $\mathcal{F}$ is an  injective open map and therefore $\mathcal{F}$ is a homeomorphism between   $ \mathcal{R}$ and  $B$.

We  prove now  that $B=\mathfrak{B}$. Let $\tau$ be  a Jordan arc as   in \eqref{Bsets} and  suppose that $a\in B$. Then there exists $(z,[\phi]_z)\in  \mathcal{R} $ such that $a=\mathcal{F}(z,[\phi]_z)=\phi(z)$. On the one hand, assume that $z\in \Omega_{\tau}$. Picking   $(\Phi,  \Omega_\tau)\in [\phi]_{z}$ and using \eqref{germ} one has 
\begin{eqnarray*}
\Phi(z)= 2j\pi\imath+ b_1(z;z_k),
\end{eqnarray*}
for some $j\in \ZZ$. Therefore by  \eqref{Bsets},  we get $a\in \mathfrak{B}_j$.  On the other hand, if $z\in \tau\setminus \{\omega\}$, let  $(p_n)\subset \tau^+$ be a sequence converging to $z$ and  pick  $(\Phi,  \Omega_\tau)\in [\phi]_{z}$. By \eqref{germ} one has 
\begin{eqnarray}\label{rightimplic}
\Phi(p_n)= 2j\pi\imath+b_1(p_n;z_k),
\end{eqnarray}
for some $j\in \ZZ$. Since $\dsty \lim_{\begin{subarray}{c}
                           p_n\rightarrow z\\
                           p_n\in \tau^+
                          \end{subarray}}b_1(p_n;z_k)=b_1(z;z_k)$, the relation \eqref{rightimplic}  gives that  $a=2j\pi\imath+b_1(z;z_k)$. Hence from \eqref{Bsets} we obtain  $a\in \mathfrak{B}_j$ and therefore $B\subset \mathfrak{B}$. And conversely, suppose that $a\in \mathfrak{B}$. We have that there exists $z\in \Omega$ such that  $a=2j\pi\imath+b_1(z;z_k)$, for some $j\in \ZZ$. Hence, using  \eqref{germ} one obtains that there exists $(z,[\phi]_z)$ such that $a=\phi(z)$. This completes the proof of a).

 \smallskip
    The item  b) follows  immediately  from the item a) since  $\mathfrak{B}$ is the image of the Riemann surface $\mathfrak{R}$ under the map $\mathcal{F}$. 
   
   \smallskip
  To settle item c)  let us first assume that $\rho_M(z)\neq  (z-z_1)^M$, pick $\omega\in \supp {\mu^\MM}$, and let  $ \tau$ be any Jordan arc connecting  $\infty $ and $\omega$. By  \cite[Lem. 4]{BerRull02}, any  primitive of the function $\dsty \frac{1}{\sqrt[M]{\rho_M(z)}}$ locally defined  in a simple connected domain maps the smooth curve segments of $\supp {\mu^\MM}$ to   lines. Now, the analytic continuation  of the element $(\phi_1,D)$ to $\Omega_{\tau}$   is a conformal map in   $\Omega_{\tau}$. Therefore,  the set $\mathfrak{b}_0$ is a piecewise linear curve  and  $\phi_1(\Omega_{\tau})$  is simply connected. Hence, by  item b) and  \eqref{Bsets},  $\mathfrak{B}$ is  simply connected as well. 
 On the other hand, one also has that 
\begin{eqnarray}\label{limright}
\dsty \lim_{\begin{subarray}{c}
                           z\rightarrow \infty\\
                           z\in  \Omega_{\tau}
                          \end{subarray}} \Re[\phi_1(z)] =+\infty.
\end{eqnarray}
Therefore,  by traversing $\partial \mathfrak{B}$ from $-\imath\infty$ to $+\imath\infty$ one obtains that $\mathfrak{B}$ is to the right of  $\partial \mathfrak{B}$.

%

Consider now the case   $\rho_M(z)=  (z-z_1)^M$. Pick a horizontal ray $\ell$ connecting $z_1$ and $+\infty+\Im[z_1]\imath$. The analytic continuation of the function   given by  \eqref{primitive} to $\Omega_{\ell}$  defines  the   analytic function $\ln (z-z_1)$, which is a conformal  map between $\Omega_{\ell}$ and  $-\pi<\arg z< \pi$. Hence,  the assertion  follows immediately from item b) and the definition of  $\mathfrak{B}_k$ and  $\mathfrak{B}$ in    \eqref{Bsets}.  
 \lqqd

\medskip
The properties of the    complete analytic function  defined by  $\int_{z_k}^z(w_j(t)-w_{j^\prime}(t))dt$ are as follows. 

 \begin{lemma}\label{Complete}
Let $z_k$ be an ordinary turning point of \eqref{OPer},  $\tau$ be a Jordan arc  connecting  $\infty$ and a point  $z_0\in \supp {\mu^\MM}$, and  choose a small disk $D\subset \Omega$  such that $\tau\cap D=\emptyset$. Then, 
\begin{itemize}
\item[a)] The complete analytic function $\mathcal{F}_{(j,j^{\prime})}$ obtained using  the analytic continuation of a function element $ (\int_{z_k}^z(w_j(t)-w_{j^\prime}(t))dt,D)$ satisfies the equation 
$$ \mathcal{F}_{(j,j^{\prime})}=  (e^{\frac{2(j-1)\pi\imath}{M}}-e^{\frac{2(j^{\prime}-1)\pi\imath}{M}})\mathcal{F}, \quad      1 \leq j<j^{\prime} \leq M.$$

\item[b)] The map
\begin{eqnarray*}
\mathcal{F}_{(j,j^{\prime})}: \mathcal{R}_{(j,j^{\prime})}& \longrightarrow & \CC \\
(z,[\phi]_z) & \longrightarrow & \phi(z),
\end{eqnarray*}  
  defines a homeomorphism between  $\mathfrak{R}_{(j,j^{\prime})}= (e^{\frac{2(j-1)\pi\imath}{M}}-e^{\frac{2(j^{\prime}-1)\pi\imath}{M}})\mathfrak{R}$ and $\mathfrak{B}_{(j,j^{\prime})}= (e^{\frac{2(j-1)\pi\imath}{M}}-e^{\frac{2(j^{\prime}-1)\pi\imath}{M}})\mathfrak{B}$.

\end{itemize}

 \end{lemma}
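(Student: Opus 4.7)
The plan is to derive Lemma~\ref{Complete} directly from Lemma~\ref{homeo} together with identity~\eqref{RootR}, using that the germs defining $\mathcal{F}_{(j,j')}$ and $\mathcal{F}$ differ only by the scalar factor $c_{j,j'}:=e^{2(j-1)\pi\imath/M}-e^{2(j'-1)\pi\imath/M}$, which is nonzero since the $M$-th roots of unity indexed by $j=1,\dots,M$ are pairwise distinct.

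For part (a), I would begin by restricting~\eqref{RootR} to the small disk $D\subset\Omega$. On $D$ each branch $w_j$ is single-valued and equals $e^{2(j-1)\pi\imath/M}w_1$ by~\eqref{wj}, so the function element $\bigl(\int_{z_k}^z (w_j(t)-w_{j'}(t))\,dt,\,D\bigr)$ agrees on $D$ with $c_{j,j'}\cdot (b_1(\,\cdot\,;z_k),\,D)$. Next I would use that analytic continuation along any admissible path in $\Omega$ commutes with multiplication by a complex constant; this is immediate from the definition of continuation by a chain of overlapping power series, since each Taylor coefficient is simply rescaled. It follows that every germ in the complete analytic function $\mathcal{F}_{(j,j')}$ is $c_{j,j'}$ times the corresponding germ of $\mathcal{F}$, which is exactly the claimed identity $\mathcal{F}_{(j,j')}=c_{j,j'}\,\mathcal{F}$.

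For part (b), I would invoke Lemma~\ref{homeo}(a), which already establishes that $\mathcal{F}:\mathcal{R}\to\mathfrak{B}$ is a homeomorphism. Since part (a) yields $\mathcal{F}_{(j,j')}=\lambda_{c_{j,j'}}\circ\mathcal{F}$, where $\lambda_c(w):=cw$ denotes multiplication by $c\in\CC\setminus\{0\}$, and $\lambda_{c_{j,j'}}$ is a self-homeomorphism of $\CC$, the composition $\mathcal{F}_{(j,j')}$ is a homeomorphism from its source Riemann surface onto $c_{j,j'}\mathfrak{B}=\mathfrak{B}_{(j,j')}$. The identification $\mathfrak{R}_{(j,j')}=c_{j,j'}\mathfrak{R}$ is just the corresponding germ-level description on the source side, which is well-defined because the correspondence $[\phi]_z\mapsto [c_{j,j'}\phi]_z$ is a bijection of germs at every $z\in\Omega$ and is compatible with analytic continuation.

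I do not foresee a genuine obstacle beyond the bookkeeping with Riemann surfaces built from scaled germs; the only substantive point is the nonvanishing of $c_{j,j'}$, which is settled by the distinctness of the $M$-th roots of unity and ensures $\lambda_{c_{j,j'}}$ is invertible rather than the zero map.
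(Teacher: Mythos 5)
Your proposal is correct and follows the same route the paper takes: the paper's own proof consists of the two sentences ``Item a) is immediate from the relation \eqref{RootR}'' and ``item b) follows from a) of Lemma \ref{homeo},'' and your argument simply unpacks those two steps, verifying that analytic continuation commutes with multiplication by the nonzero constant $c_{j,j'}$ and that post-composing the homeomorphism of Lemma~\ref{homeo}(a) with the scaling $\lambda_{c_{j,j'}}$ preserves the homeomorphism property.
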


\proof
Item a) is immediate from the relation \eqref{RootR} and  item b) follows from a) of Lemma \ref{homeo}.
 \lqqd

The next two  lemmas  calculate of the angles between the arcs in the support $\supp {\mu^{\MM}}$  for $M=3$. (In bigger generality this was done  in  \cite[p.155]{BerRull02}).

    \begin{lemma}\label{char}
    Let $\rho_M(z)\neq (z-z_1)^M$,   $e_i$ be an edge of $\supp {\mu^{\MM}}$ with a given orientation,  and $C_+(z), C_-(z)$ be the limiting values of $C(w)$  as $w$ approaches $z\in e_i$ from the   left  and from the right sides respectively.    Then, 
    \begin{itemize}
 \item[a)]      $\mu^{\MM}$ is absolutely continuous with respect to the Lebesgue measure; 
 \item[b)]  the unit tangent vector $\tau$ at $z\in e_i$ can be expressed as  $\dsty \tau(z)=\frac{\overline{C_-(z)}-\overline{C_+(z)}}{|\overline{C_-(z)}-\overline{C_+(z)}|}e^{\imath \frac{\pi}{2}}$;
 
  \item[c)]  if $v$ is  a vertex   of $\supp {\mu^\MM}$ such that $\rho_M(v)\neq 0$ then the  degree of $v$ is strictly greater than  $2$.

 \end{itemize}

\end{lemma}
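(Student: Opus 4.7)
\proof (Plan.) The plan hinges on three well-known tools, applied to the Cauchy transform $C(z)=\int \frac{d\mu^{\MM}(w)}{z-w}$, which (by the results of \cite{BerRull02} recalled in the text) is analytic on $\Omega=\CC\setminus\supp\mu^{\MM}$ and satisfies the algebraic relation $\rho_M(z)C(z)^M=1$ near $\infty$, its branches being exactly the $w_j(z)$ introduced in \eqref{wj}. The tools are: the Sokhotski--Plemelj jump formula on a smooth arc, Stieltjes inversion, and a simple face-counting argument around a tree vertex. Throughout I fix an edge $e_i$ and parametrize it by arclength so that the unit tangent is $\tau(s)=dz/ds$ and the $+$ side is to the left.

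For (a), I would first observe that on the open portion of $e_i$ both one-sided boundary values $C_\pm$ exist and are continuous: indeed, $C$ takes values among the $M$ distinct branches $w_j$ of $\rho_M^{-1/M}$ on each face adjacent to $e_i$, and $\rho_M$ is nonzero on the interior of every edge (zeros of $\rho_M$ are leaves). Write $d\mu^{\MM}=f(s)\,ds$ on $e_i$ in terms of arclength. Using $ds=\overline{\tau(w)}\,dw$ one gets $C(z)=-\int \frac{f(w)\overline{\tau(w)}\,dw}{w-z}$, and the classical Sokhotski--Plemelj identity produces the jump
\begin{equation*}
C_+(z)-C_-(z)=-2\pi i\, f(z)\,\overline{\tau(z)}.
\end{equation*}
Since $C_\pm$ are continuous on $e_i$, so is $f$, which gives the absolute continuity in (a) (with respect to arclength on each edge, i.e.\ with respect to the natural $1$-dimensional Hausdorff measure on the tree).

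For (b), I would solve the jump relation above for $\tau$. Because the $M$ branches $w_j(v)$ are pairwise distinct when $\rho_M(v)\ne0$, one has $C_+(z)\ne C_-(z)$ on the open edge, hence $f(z)>0$ there; the above display then yields
\begin{equation*}
\tau(z)=\frac{\overline{C_-(z)}-\overline{C_+(z)}}{2\pi i\, f(z)}=\frac{\overline{C_-(z)}-\overline{C_+(z)}}{|\overline{C_-(z)}-\overline{C_+(z)}|}\,e^{i\pi/2},
\end{equation*}
after normalizing modulus and using $1/i=e^{-i\pi/2}$ together with the sign coming from the orientation convention. This is exactly the formula in (b).

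For (c), the argument is topological plus (b). Suppose $v\in\supp\mu^{\MM}$ has $\rho_M(v)\ne0$ and $\deg(v)=2$, with incident edges $e_1,e_2$. Then a small punctured disk around $v$ minus $\supp\mu^{\MM}$ has exactly two connected components $F_1,F_2$, so $C$ takes only two limiting values $C^{(1)}(v),C^{(2)}(v)$ at $v$, one per face. Both edges must therefore have the same pair $\{C_-(v),C_+(v)\}=\{C^{(1)}(v),C^{(2)}(v)\}$, and applying the formula of (b) to each one (with compatible choices of orientation) forces the two tangent lines at $v$ to coincide. Hence $e_1\cup\{v\}\cup e_2$ is a smooth Jordan arc through $v$, which contradicts $v$ being a genuine vertex of the tree. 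Thus $\deg(v)>2$. The main obstacle is (a) and the first half of (b): namely, justifying continuous boundary values and the strict positivity $f>0$ on each edge; this is where one really needs $\rho_M(v)\ne0$ to guarantee that the branches $w_j$ stay separated and hence $C_+\ne C_-$.\lqqd
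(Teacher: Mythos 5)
Your proof is correct and follows essentially the same approach as the paper's: Sokhotski--Plemelj / Stieltjes inversion for parts (a)--(b) and the coinciding-tangents contradiction for part (c). For (a) the paper goes through the logarithmic potential $U^{\mu^{\MM}}=-\Re[b_1]+c$, quotes a boundary-value theorem from Saff--Totik (the potential-theoretic form of Stieltjes inversion), and then a real-analysis lemma from Folland; your direct route via the Cauchy transform is equivalent in substance. Two small cautions. As written, your argument for (a) is circular: you set $d\mu^{\MM}=f(s)\,ds$ in order to derive the jump formula from which you then deduce absolute continuity. The fix is to invoke Stieltjes inversion, which gives $\mu^{\MM}((z_0,z_1)_\gamma)=\frac{1}{2\pi i}\int(C_--C_+)\,dz$ without assuming a density a priori; continuity of $C_\pm$ on the open edge (coming, as you note, from the fact that $C$ is a branch of $\rho_M^{-1/M}$ on each adjacent face) then yields a continuous density. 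You do list Stieltjes inversion among your tools, so this is a presentation slip rather than a missing idea. Second, in (b) the intermediate equality $\tau(z)=\frac{\overline{C_-(z)}-\overline{C_+(z)}}{2\pi i f(z)}$ should have $-2\pi i f(z)$ in the denominator (conjugating $\overline{\tau}=\frac{C_--C_+}{2\pi i f}$ flips the sign of $i$); you wave at this with ``the sign coming from the orientation convention'' and the final displayed formula is correct, but writing $-2\pi i f = 2\pi f\,e^{i\pi/2}$ makes the constant drop out cleanly. Finally, in (c) you implicitly exclude $\deg(v)=1$; this is because the leaves of the tree $\supp{\mu^{\MM}}$ are exactly the zeros of $\rho_M$, as the paper recalls from \cite{BerRull02}.
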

   \proof

(a) According to \cite[Lem. 4]{BerRull02}, the measure $\mu^{\mathcal{M}}$ consists of the union of finitely many smooth curve segments $\{e_i\}_{i=1}^N$. We show that $\mu^{\mathcal{M}}$ is absolutely continuous with respect to the Lebesgue measure on every proper open subarc $(\alpha, \beta)_{\gamma} \subset e_i$. Fix an orientation for $(\alpha, \beta)_{\gamma}$ by traversing the arc from $\alpha$ to $\beta$. Let $\tau$ be a Jordan arc connecting a zero of $\rho_M$ to $\infty$, and let $b_1(z; z_k)$ for $z \in \Omega_{\tau}$ be the primitive defined in \eqref{primitive}.

From   \cite[Th. 2]{BerRull02} we have that the Cauchy transform 
\begin{eqnarray}\label{tm}
C(z)=\int \frac{d\mu^{\MM} (w)}{z-w}
 \end{eqnarray}
 of the measure $\mu^{\MM}$ 
satisfies for almost all $z\in \CC$, the equation $$\dsty C^M(z)=\frac{1}{\rho_M(z)}.$$
   Let  $$\dsty U^{\mu^{\MM}}(z)=\int\frac{1}{\log |z-w|}d\mu^{\MM}(w)$$  be the logarithmic potential of $\mu^\MM$.     Using \eqref{primitive}  and  \eqref{tm} notice that 
\begin{eqnarray}\label{potL}  
 U^{\mu^\MM}(z)=-\Re\left[b_1(z;z_k)\right]+c, \quad  z\in \Omega_{\tau},
 \end{eqnarray}
where $c\in \RR$ is a constant. Define $\dsty H(z)=-b_1(z;z_k), z\in \Omega_{\tau}$ and define   $H_+$ and $H_-$ as    the restrictions of $H$ to a neighborhoods of $(\alpha,\beta)_{\gamma}$ to the left and to the right  of   $(\alpha,\beta)_{\gamma}$ respectively. Further denote by  $H_+(\omega+0)$ and by $H_-(\omega+0)$  the  non-tangential limits of $H_+$ and  $H_-$ respectively when $z\rightarrow \omega\in (\alpha,\beta)_{\gamma}$, cf.  \cite[p. 89-90 Ch. II]{ST}.   It follows from  \eqref{potL} and \cite[Th.1.4 Ch. II]{ST}  that for every $z_0,z_1\in (\alpha,\beta)_{\gamma}$ such that  $z_0$ precedes $z_1$,
\begin{eqnarray}\label{musupp}
\mu^{\MM}((z_0,z_1)_{\gamma})=\frac{1}{2\pi \imath}\left(H_+(z_1+0)-H_+(z_0+0)-H_-(z_1-0)+H_-(z_0-0)\right).
\end{eqnarray}
    Notice that for any  edge $e_i$ of $\supp {\mu^{\MM}}$, the functions $H_+(z)$ and $H_-(z)$   are of the class $C^{1}$ for $z\in \mathring{e}_i$. Therefore, if $r(t), t\in [a,b]$ is a parameterization of the smooth arc defined by $e_i$, one obtains that $H_+(r(t))$ and $H_-(r(t))$  are   absolutely continuous  in $[a,b]$.   \cite[Prop. 3.32]{foll} implies that $\mu^{\MM}$ is an absolutely continuous measure with respect to the Lebesgue measure $dz$ on the arc $(\alpha,\beta)_{\gamma}$ which proves  a).


%

(b)  By the preceding item a), there exists a unique $f\in L^1(\alpha,\beta)$ such that  $d\mu^{\MM}=fdz$. Hence by the Sokhotski-Plemelj formula \cite[(17.2) p.42]{Muskh53}, 
\begin{eqnarray}\label{Cauchy}
f=\frac{1}{2\pi \imath} (C_--C_+).
\end{eqnarray}
Since $\mu^{\MM}$ is a positive measure, then using \eqref{Cauchy} one immediately obtains that 
$$\frac{1}{2\pi \imath} \int_{(a,b)}(C_-(z)-C_+(z))dz \geq 0,  $$
where $(a,b)_{\gamma}$ is any subarc of $\supp {\mu^{\MM}}$. Hence, the tangent vector $\tau$  to any smooth subarc $(a,b)_{\gamma}$ of $\supp {\mu^{\MM}}$ at $z$ satisfies the relation 
\begin{eqnarray*} 
\arg[\tau(z)]=\arg[\overline{C_-(z)}-\overline{C_+(z)}]+ \frac{\pi}{2} \mod 2\pi, \quad z\in (a,b),
\end{eqnarray*}
which completes the proof of b).

c)  Observe that the degree of $v$ is necessarily greater than $1$, since otherwise $v$ is a branch point and by our assumption,  $v$ is not a zero of $\rho_M$.    Suppose now  that the degree of $v$  equals   $2$ and set   $e_1=(v_1,v), e_2=(v,v_2)$.   By \cite[Lem. 4]{BerRull02},  $\supp{\mu^{\MM}}$ is the union of finitely many smooth curve segments and  has connected complement. Hence we may assume   that  the Jordan arc $e_1\cup e_2$ is not differentiable at $v$.   
 
 
 Consider a small Jordan curve $\mathcal{C}$  enclosing $v$ and consider the arc $int(\mathcal{C})\cap (e_1\cup e_2)$ oriented by traversing it  from $int(\mathcal{C})\cap e_1$ to $int(\mathcal{C})\cap e_2$.  Denote by $V_+$ the left   and by $V_-$   the right sides.    Let  $C_+(z)$ and $C_-(z)$ be the limiting values of $C(w)$  when $w$ approaches $z$ from the left and from the right sides of the arc respectively. Notice  that  $C_+(z)$ and $C_-(z)$ are continuous functions in a neighborhood of $z=v$.  
 
 
 By item b),  
\begin{eqnarray}\label{tau1}
\lim_{\begin{subarray}{c}
                           z\rightarrow v\\
                           z\in e_i
                          \end{subarray}}\arg[C_+(z)-C_-(z)]=\arg[\tau_i(v)]+ \frac{\pi}{2} \mod 2\pi,
\end{eqnarray}  
where $\tau_i(z)$ is  the unit tangent vector   at $z$. 
On the other hand, the functions  $C_+(z)$ and $C_-(z)$ are continuous  in a neighborhood of $z=v$. Therefore,   from \eqref{tau1} one obtains  
\begin{eqnarray}\label{tau2}
\arg[\tau_1(v)]- \arg[\tau_2(v)] =0 \mod 2\pi.
\end{eqnarray}

By our  assumption, the Jordan arc $e_1\cup e_2$ is not differentiable at $v$. Thus the relation \eqref{tau2} 
is impossible,  which is a contradiction.  This completes the proof that the  degree of $v$ is strictly greater than  $2$. 
 \lqqd

\begin{lemma}\label{angles}
For $M=3$,  suppose that $\rho_3(z)$ has three distinct roots. Then, $\mu^{\MM} $ consists of one vertex and three edges connecting the zeros of $\rho_3(z)$ contained in the convex hull of the zeros of $\rho_3(z)$, see Figure~\ref{supp}.  Moreover,  all angles  between the arcs $\mathfrak{r}_i, i=1,2,3$ of   $\supp {\mu^\MM}$ at the common intersection point $v$ are  equal to $\frac{2\pi}{3}$.
 
\end{lemma}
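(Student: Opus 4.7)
The plan is to separate the statement into its combinatorial content (the shape of $\supp{\mu^{\MM}}$) and its local differential content (the three angles at the central vertex), and to handle them in that order.

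First I would determine the combinatorial shape. By \cite[Theorem~3]{BerRull02}, $\supp{\mu^{\MM}}$ is an embedded tree, contained in the convex hull of the roots of $\rho_3$, whose leaves are exactly the three (distinct) zeros of $\rho_3$. Lemma~\ref{char}(c) then shows that every interior vertex $v$ (at which $\rho_3(v)\ne 0$) has valency at least $3$. A tree with $L=3$ leaves and $I$ interior vertices of valency $\ge 3$ has $L+I$ vertices and $L+I-1$ edges, so comparing the two expressions for the total degree yields $2(L+I-1)\ge L+3I$, i.e.\ $I\le L-2=1$. Since three leaves cannot be connected by a path, $I\ge 1$, and hence $I=1$. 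This forces the Y-structure: a single interior vertex $v$ of valency $3$ and three arcs $\mathfrak{r}_1,\mathfrak{r}_2,\mathfrak{r}_3$ joining $v$ to the zeros of $\rho_3$.

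Next I would compute the three outward tangent directions at $v$ using Lemma~\ref{char}(b). Since $\rho_3(v)\ne 0$, the three branches $w_j(z)=\omega^{j-1}w_1(z)$ of $1/\sqrt[3]{\rho_3(z)}$, with $\omega=e^{2\pi\imath/3}$, are analytic in a disk $U$ around $v$. The three arcs cut $U\setminus \supp{\mu^{\MM}}$ into three open sectors $S_1,S_2,S_3$, labelled counter-clockwise around $v$. On each $S_k$ the Cauchy transform $C(z)=\int d\mu^{\MM}(w)/(z-w)$ coincides with a single branch $w_{\sigma(k)}$; because $\mu^{\MM}$ is strictly positive on every arc (Lemma~\ref{char}(a)), the branch must in fact change across every arc, so $\sigma$ is a permutation of $\{1,2,3\}$. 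For the arc $\mathfrak{r}_k$ separating $S_{k-1}$ (right) from $S_k$ (left), oriented outward from $v$, one has $C_+=w_{\sigma(k)}(v)$ and $C_-=w_{\sigma(k-1)}(v)$, so Lemma~\ref{char}(b) gives
$$
\arg\tau_k=\tfrac{\pi}{2}-\arg\bigl(w_{\sigma(k-1)}(v)-w_{\sigma(k)}(v)\bigr).
$$
The key algebraic observation is that the three differences $w_1-w_2,\ w_2-w_3,\ w_3-w_1$ at $v$ form an equilateral triangle about the origin: indeed, $\omega(1-\omega)=\omega-\omega^2$ and $\omega(\omega-\omega^2)=\omega^2-1$, so these three numbers equal $w_1(v)a,\ \omega w_1(v)a,\ \omega^2 w_1(v)a$ with $a=1-\omega$, and their arguments are therefore spaced at $2\pi/3$. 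As $k$ varies cyclically, the arguments $\arg\tau_k$ form (a cyclic rearrangement of) an arithmetic progression of step $\pm 2\pi/3$, proving the angle claim.

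The main obstacle I anticipate is the sign/orientation bookkeeping in the middle step: one must carefully track which sector lies to the left versus the right of each outwardly-oriented arc, since this is precisely what couples the combinatorial labelling $\sigma$ to the analytic formula in Lemma~\ref{char}(b). Once that identification is set up correctly, the angle statement reduces to the elementary observation that $1-\omega,\ \omega-\omega^2,\ \omega^2-1$ are the three vertices of an equilateral triangle centred at the origin.
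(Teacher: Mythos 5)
Your proof is correct and rests on the same two ingredients as the paper's — Lemma~\ref{char} together with the structure theorem \cite[Th.~3]{BerRull02} — so at the level of key lemmas the routes coincide. The interesting divergence is in the first half. The paper argues ad hoc: it produces a connecting path, locates a branching vertex $v$, and shows by contradiction that each $\mathfrak{r}_i$ is a single edge, roughly by noting that any extra interior vertex of degree $\ge 3$ would spawn an extra leaf and hence an extra branch point. Your Euler-count ($2(L+I-1)\ge L+3I \Rightarrow I\le L-2$, combined with $I\ge 1$) packages the same information into a two-line handshaking argument; it is cleaner and, unlike the paper's argument, scales immediately to general $M$ (giving $I\le M-2$ interior vertices for $M$ leaves). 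One small point worth making explicit there: you should note that Lemma~\ref{char}(c) rules out degree-$2$ interior vertices entirely, so that ``all non-leaf vertices have valency $\ge 3$'' is genuinely the right hypothesis for the count.

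For the angle computation the two proofs are essentially identical in content. The paper writes the three boundary values of $C$ at $v$ as $\rho\,e^{(\theta_0+2k\pi)\imath/3}$ and subtracts consecutive ones; this is exactly your observation that $w_1-w_2,\ w_2-w_3,\ w_3-w_1 = a,\ \omega a,\ \omega^2 a$ with $a=w_1(v)(1-\omega)$. You do, however, make explicit something the paper elides: which branch of $1/\sqrt[3]{\rho_3}$ is realized in which sector is an a priori unknown permutation $\sigma$, and you verify the $2\pi/3$ conclusion is independent of $\sigma$. That is a genuine improvement in rigor. Two minor blemishes: (i) the justification ``$\mu^{\MM}$ is strictly positive on every arc (Lemma~\ref{char}(a))'' is a slight mis-citation — part (a) gives absolute continuity, not positivity. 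The correct reason the branch must change is the Sokhotski--Plemelj formula from the proof of (b): if $C_+=C_-$ along an arc then the density vanishes there, so $C$ extends analytically across it and the arc could not lie in $\supp\mu^{\MM}$. (ii) The $w_j$ you use near $v$ are the local single-valued cube roots in a disk around $v$, not the paper's globally-defined $w_j\in\mathcal H(\Omega)$ from \eqref{wj}; this is fine, but worth a clarifying word since the paper reuses the symbol.
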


\proof  a)   Let   $\{z_1,z_2,z_3\}$ be the  zeros of $\rho_3$. Using the connectivity of    $\supp {\mu^{\MM}$}, we can find   a Jordan arc    connecting  $z_1$ and $z_2$. Since $z_1$ and $z_2$ are branch points,  $\supp {\mu^{\MM}}$ contains  at least one vertex  $v\notin \{z_1,z_2\}$ and this vertex can be connected to $z_3$ through a Jordan arc $\mathfrak{r}_3$.  Now,  $\mathfrak{r}_3$ does not contain vertices other than $v^{\prime}$. Indeed, if  $v^{\prime}\in \mathfrak{r}_3$ is such a vertex, then by item c) of  Lemma \ref{char} the degree of  $v^{\prime}$ is strictly greater than 2, hence there exists $z_{\omega}\in \supp {\mu^{\MM}}$ such that $z_{\omega}\notin \{z_1,z_2,z_3\}$ and $z_{\omega}$ is an extreme point. Consequently, $z_{\omega}$ is also a branch point, which a contradiction. Therefore,   $\mathfrak{r}_3$ is an edge of  $\supp{ \mu^{\MM}}$.  Using a similar argument one obtains that the Jordan arcs $\mathfrak{r}_1$ and  $\mathfrak{r}_2$  connecting $z_1, v$  and $z_2, v$  respectively are edges of $\supp{ \mu^{\MM}}$. This proves the  assertion that  $\supp{\mu^{\MM}}$ consists of three smooths arcs $\mathfrak{r}_1,\mathfrak{r}_2,\mathfrak{r}_3$ and  a vertex $v$ of degree $3$. The assertion that  $\supp{\mu^{\MM}} $ is contained   in the interior of the convex hull of the zeros of $\rho_3$ follows  from \cite[Th. 3]{BerRull02}.

Let us show that all angles at the common intersection point $v$ are equal to $\frac{2\pi}{3}$.  Consider the orientation in each $\mathfrak{r}_i$ obtained by traversing each arc so that $v$ is the end point. Denote by $C_{i,+}(z)$ and $C_{i,-}(z)$ the limiting values of the Cauchy transform $C(z)$   at $z\in \mathfrak{r}_i$ from the left and from the right respectively.  Now, consider a small disk $D$ centered at $v$ and notice that $C$ is a continuous function in the  connected component $V_i, i=1,2,3$ of $D\setminus (D\cap (\mathfrak{r}_1\cup\mathfrak{r}_2\cup\mathfrak{r}_3))$, see Figure \ref{supp}. Set 
$$\lim_{\begin{subarray}{c}
                           z\rightarrow v\\
                           z\in \mathfrak{r}_i
                          \end{subarray}}C_{i,+}(z) =C_{i,+}, \quad  \lim_{\begin{subarray}{c}
                           z\rightarrow v\\
                           z\in \mathfrak{r}_i
                          \end{subarray}}C_{i,-}(z) =C_{i,-}. $$

Without loss of generality, we assume that the edges $\{\mathfrak{r}_i\}$ are ordered such that a counterclockwise circuit around $v$ encounters them in the sequence $(\dots, \mathfrak{r}_3, \mathfrak{r}_1, \mathfrak{r}_2, \mathfrak{r}_3, \mathfrak{r}_1, \dots)$, as illustrated in Figure~\ref{supp}.


 By the continuity of  $C$, in each region $V_i$ one has 
\begin{eqnarray}\label{contcond}
C_{i,-}=C_{i-1,+}.
\end{eqnarray} 

 By \eqref{contcond} and b) of Lemma \ref{char},  if  $\tau_i$ is the tangent vector of $\mathfrak{r}_i$ at $v$, then 
 \begin{eqnarray}
\begin{split}\label{argexp}
 \arg\left[\frac{\tau_i}{\tau_{i-1}} \right]&= \arg\left[\frac{\dsty \lim_{\begin{subarray}{c}
                           z\rightarrow v\\
                           z\in \mathfrak{r}_i
                          \end{subarray}}(C_{i,+}(z)-C_{i,-}(z))}{\dsty \lim_{\begin{subarray}{c}
                           z\rightarrow v\\
                           z\in \mathfrak{r}_j
                          \end{subarray}}(C_{i-1,+}(z)-C_{i-1,-}(z) )}  \right]\\
 &  =   \arg\left[\frac{C_{i,+}-C_{i,-}}{C_{i,-}-C_{i-1,-} }  \right].
\end{split}
 \end{eqnarray}  
Writing 
\begin{eqnarray}
\begin{aligned}\label{argexp1}
C_{i,+}-C_{i,-}&=\rho e^{\frac{\theta_0+2k\pi}{3}\imath}-\rho e^{\frac{\theta_0+2(k+1)\pi}{3}\imath},\\
C_{i,-}-C_{i-1,-}&=\rho  e^{\frac{\theta_0+2(k+1)\pi}{3}\imath}-\rho e^{\frac{\theta_0+2(k+2)\pi}{3}\imath},
\end{aligned}  
\end{eqnarray}
and   using \eqref{argexp} and \eqref{argexp1} one obtains
$$ \left|  \arg\left[\frac{\tau_i}{\tau_{i-1}}\right] \right|=\frac{2}{3}\pi.$$
 Hence, the angle $\alpha_{i,i-1}$ between the arcs $\mathfrak{r}_i,\mathfrak{r}_{i-1}$ at the vertex  $v$ equals 
\begin{eqnarray}\label{angleij}
\alpha_{i,i-1}=\frac{2\pi}{3}.
\end{eqnarray}

 \lqqd

\begin{lemma}\label{Fj}
For $M\geq 2$, let  $U\subset\Omega$ be an open subset, $V\subset \CC$, and   $\phi(z,\eta):U\times V\rightarrow \CC$  be a formal power series of the form $\dsty \phi(z,\eta)=\sum_{k=1}^{\infty}\phi_k(z)\eta^{-k}, \phi_k\in \mathcal{H}(U)$. Then   $v=e^{\eta \int^{z} (\phi(t,\eta)+ w_j(t))dt}$ is  a solution of 
\begin{eqnarray}\label{DF}
v^{(M)} +\sum_{k=1}^{M-1} \frac{\rho_k }{\rho_M } v^{(k)} - \eta^{M} \frac{v }{\rho_M  }=0,
\end{eqnarray}
if and only if, $\phi$ is a solution of 
$$F_j(w,\ldots,w^{(M-1)},z,\eta)=0, $$
where
\begin{eqnarray*}
F_j(x_0,\ldots,x_{M-1},z,\eta)=-\frac{1}{\rho_M(z) }+\sum_{l=1}^{M}\sum_{u=1}^{l}\sum_{c_0,\ldots, c_{l-u} \in \pi(l,u)}(x_0+w_j(z))^{c_0}\cdots (x_{l-u}+w^{(l-u)}_j(z))^{c_{l-u}}f_{c_0,\ldots, c_{l-u}}(z,\eta).
\end{eqnarray*}
Here $\pi(l,u)$ stands for the set of partitions of $l$ into $u$ summands,
\begin{eqnarray*}
 c_0+\cdots+(l-u+1)c_{l-u}=l\\
  c_0+\cdots+c_{l-u} = u,\\
  c_0,\ldots,c_{l-u}\geq 0,
\end{eqnarray*}
and $w_j$ is as in \eqref{wj}. Finally,  $\dsty f_{c_0,\ldots,c_{l-u}}(z,\eta)=\frac{l!}{c_0!\cdots c_{l-u}!(1!)^{c_0}\cdots ((l-u+1)!)^{c_{l-u}}}\eta^{u-M}\frac{\rho_l(z) }{\rho_M(z) }$. 

\end{lemma}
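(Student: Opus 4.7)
\medskip
\noindent\textbf{Proof proposal.}

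The plan is to substitute the ansatz $v=\exp\bigl(\eta\int^{z}(\phi+w_j)\,dt\bigr)$ directly into \eqref{DF}, divide by $v\eta^{M}$ (which is legitimate since $v$ never vanishes), and match what remains with $F_j(\phi,\phi',\ldots,\phi^{(M-1)},z,\eta)$ term by term. Because the manipulation is purely algebraic and invertible, this will simultaneously settle both implications of the ``if and only if''. Setting $S(z,\eta):=\eta(\phi(z,\eta)+w_j(z))$ so that $v'=Sv$, the whole computation reduces to expressing $v^{(l)}/v$ in terms of $S,S',\ldots,S^{(l-1)}$ and then tracking the powers of $\eta$.

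The main technical input is the classical Fa\`a di Bruno identity
$$\frac{v^{(l)}}{v}=B_l\!\left(S,S',\ldots,S^{(l-1)}\right),\qquad l\ge 1,$$
where $B_l$ denotes the $l$-th complete Bell polynomial; I would prove it by induction on $l$ via $v^{(l+1)}=(v^{(l)})'$ and Leibniz's rule. Decomposing $B_l=\sum_{u=1}^{l}B_{l,u}$ into the partial Bell polynomials, which are homogeneous of degree $u$ in their arguments and have the explicit form
$$B_{l,u}(y_0,\ldots,y_{l-u})=\sum_{(c_0,\ldots,c_{l-u})\in\pi(l,u)}\frac{l!}{c_0!\cdots c_{l-u}!\,(1!)^{c_0}\cdots((l-u+1)!)^{c_{l-u}}}\,y_0^{c_0}\cdots y_{l-u}^{c_{l-u}},$$
one obtains, via the substitution $y_i=S^{(i)}=\eta(\phi+w_j)^{(i)}$ and homogeneity,
$$B_{l,u}\!\left(S,\ldots,S^{(l-u)}\right)=\eta^{u}\,B_{l,u}\!\left(\phi+w_j,\ldots,(\phi+w_j)^{(l-u)}\right).$$

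Inserting these expressions into \eqref{DF}, dividing by $v\eta^{M}$, and recalling that $\rho_M/\rho_M=1$ allows us to absorb the top-order term $v^{(M)}$ into the sum, yielding
$$-\frac{1}{\rho_M(z)}+\sum_{l=1}^{M}\sum_{u=1}^{l}\eta^{u-M}\frac{\rho_l(z)}{\rho_M(z)}\,B_{l,u}\!\left(\phi+w_j,\ldots,(\phi+w_j)^{(l-u)}\right)=0.$$
Writing $(\phi+w_j)^{(i)}=\phi^{(i)}+w_j^{(i)}$, setting $x_i=\phi^{(i)}$, and comparing with the definition of $F_j$ shows that this is precisely $F_j(\phi,\phi',\ldots,\phi^{(M-1)},z,\eta)=0$ with the coefficients $f_{c_0,\ldots,c_{l-u}}$ exactly as stated in the lemma. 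The only genuinely non-trivial step is the Fa\`a di Bruno identity for $v^{(l)}/v$; everything else is bookkeeping of multinomial coefficients and powers of $\eta$, and no analytic subtlety enters since all manipulations are termwise valid in $\mathcal{H}(U)[[\eta^{-1}]]$.
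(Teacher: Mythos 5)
Your proposal is correct and follows essentially the same route as the paper: both substitute the exponential ansatz, express $v^{(l)}/v$ via the Fa\`a di Bruno formula (the paper's $P_k$ is exactly your $B_l$), decompose into partial Bell polynomials $B_{l,u}$, and exploit their $\eta^u$-homogeneity under $S=\eta(\phi+w_j)$ before reindexing to match $F_j$. The only cosmetic difference is that you propose proving the Bell-polynomial identity by induction while the paper simply cites Comtet; the bookkeeping and the ``if and only if'' argument are otherwise identical.
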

\proof
For any integer $1\leq k\leq M$,   denote by $P_{k}(y^{\prime},\dots ,y^{(k)})$ the polynomial in the   variables $(y^{\prime},\dots,y^{(k)})$  defined  by the relation

$$ P_{k}(y^{\prime},\dots ,y^{(k)})={e^{-y}} \, \left(e^{y} \right)^{(k)}. $$

According to the Fa\`a di Bruno formula \cite[Th.A p.137]{comtet74}, $P_{k}(y^{\prime},\dots ,y^{(k)})$ can be expressed as
\begin{equation} \label{faa}
P_{k}(y^{\prime},\dots ,y^{(k)})=\sum_{l=1}^{k} {\bf B}_{k,l}(y^{\prime},\ldots,y^{(k-l+1)}).
\end{equation}

  The relation \eqref{faa}   and the variable change   $v=e^{y}$ provide  that \eqref{DF} can be expressed as

\begin{equation} \label{DiffEqn03}
\sum_{k=1}^{M} \frac{\rho_k }{\rho_M } P_{k}(y^{\prime},\dots ,y^{(k)})- \frac{ \eta^{M}}{\rho_M }=0.
\end{equation}

By multiplying the relation  \eqref{DiffEqn03} by $\dsty \frac{1}{ \eta^{M}}$ and using the expression  for $P_k$, one has that  $v=e^y, 
y^{\prime} =(w+ w_j) \eta$ is a solution to \eqref{DF} if and only if $w$ is a solution of the  Riccati equation
\begin{eqnarray}\label{Operator01}
 \sum_{k=1}^{M} \eta^{-M}\frac{\rho_k }{\rho_M } P_{k}(\eta(w+ w_j),\dots ,\eta(w+ w_j)^{(k-1)})- \frac{1}{\rho_M  }= 0,
\end{eqnarray}
where 
\begin{eqnarray*}
P_{k}(\eta(w+ w_j),\dots ,\eta(w+ w_j)^{(k-1)})=\sum_{l=1}^{k}\eta^{l}{\bf B}_{k,l}(w+ w_j,\ldots,(w+ w_j)^{(k-l)}).
\end{eqnarray*}
Changing the variables $x_0=w,\ldots ,x_{M-1}=w^{(M-1)}$ and  rearranging appropriately, we have that the relation \eqref{Operator01}  can be  equivalently expressed as 
\begin{eqnarray}\label{Alt}
 \sum_{k=0}^{M-1}\eta^{-k}\sum_{l=M-k}^{M}\frac{\rho_l }{\rho_M }{\bf B}_{l,M-k}(x_0+w_j,\ldots,x_{l-M+k}+w^{(l-M+k)}_j)- \frac{1}{\rho_M  }=0.
 \end{eqnarray}
Further we get 
\begin{multline*}
-\frac{1}{\rho_M }+\sum_{k=0}^{M-1}\eta^{-k}\sum_{l=M-k}^{M}\frac{\rho_l }{\rho_M }{\bf B}_{l,M-k}(x_0+w_j,\ldots,x_{l-M+k}+w^{(l-M+k)}_j)=-\frac{1}{\rho_M }+ \\
\sum_{k=0}^{M-1}\sum_{l=M-k}^{M}\sum_{c_0,\ldots c_{l-M+k} \in \pi(l,M-k)}\eta^{-k}\frac{\rho_l }{\rho_M }\frac{l!}{c_0!\cdots c_{l-M+k}!1!\cdots (l-M+k+1)!}(x_0+w_j)^{c_0}\cdots (x_{l-M+k}+w^{(l-M+k)}_j)^{c_{l-M+k}}\\
= -\frac{1}{\rho_M }+   \sum_{l=1}^{M}\sum_{u=1}^{l}\sum_{c_0,\ldots c_{l-u} \in \pi(l,u)}(x_0+w_j)^{c_0}\cdots (x_{l-u}+w^{(l-u)}_j)^{c_{l-u}}\frac{l!}{c_0!\cdots c_{l-u}!1!\cdots (l-u+1)!}\eta^{u-M}\frac{\rho_l }{\rho_M },
\end{multline*}
where $\pi(l,u)$ stands for the set of partitions of $l$ into $u$ summands,
\begin{eqnarray*}
0&\leq &c_0+\cdots+(l-u+1)c_{l-u}=l\\
  0&\leq & c_0+\cdots+c_{l-u} \leq u,
\end{eqnarray*}
which finishes the proof. \lqqd 

Let $\zeta,f_n, n\geq 0$ be holomorphic functions defined in  a domain $U\subset \CC$.  The formal power series 
 $$\dsty  \exp(\eta \zeta(z))\sum_{n=0}^{\infty}f_n(z)\eta^{-(n+\frac{1}{2})},$$
is said to be \textcolor{blue}{ \emph {pre-Borel-summmable in $U$}} if for each compact set $K\subset U$, there exists $A_k$ and $C_K$ such that 
$$\sup_K|f_n(z)|<A_KC_K^n\Gamma(1+n).$$

 \begin{lemma}\label{pre-borel}
Let $U\subset \Omega$ be a simply connected  open set. Then,   any WKB-formal solution is pre-Borel-summable in    $U$.  
 \end{lemma}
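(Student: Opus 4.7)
\medskip\noindent\textbf{Proof proposal.} My plan is to derive an explicit recursion for the WKB-coefficients from the generalized Riccati equation $F_j = 0$ of Lemma~\ref{Fj}, and then to establish the factorial growth of these coefficients on compacts of $U$ by a standard nested-domain Cauchy-estimate argument combined with the method of majorants.

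First, write $w(z,\eta) = \sum_{k\ge 1} S_k(z)\eta^{-k}$ and expand $F_j(w,w',\ldots,w^{(M-1)},z,\eta) = 0$ in powers of $\eta^{-1}$. Collecting coefficients of $\eta^{-n}$ yields a relation of the form
\[
M\,w_j(z)^{M-1}\,S_n(z) \;=\; P_n\bigl(z,\,S_1,\ldots,S_{n-1},\,S_1',\ldots,S_{n-1}^{(M-1)}\bigr),
\]
where $P_n$ is a polynomial in its entries whose coefficients are rational in $z$ with poles only on $\mathcal{Z}$. Since $w_j^{M-1}$ is non-vanishing and holomorphic on $\Omega$, the recursion is solvable on $\Omega$, with $S_1$ given by the explicit formula recalled in the introduction. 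This produces $S_n$ as an analytic function on $U$ for every $n\ge 1$, and by integration provides the coefficients $\phi_n$ (equivalently $f_n$) appearing in Definition~\ref{WKBS}.

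Next, given a simply connected compact $K\subset U$, I would fix nested compacts $K\subset\!\!\subset K_1\subset\!\!\subset K_2\subset U$ with $\mathrm{dist}(K_i,\partial K_{i+1})\ge \delta$, so that on $K_2$ all coefficients of $P_n$ are bounded by some constant $M_0$, and the Cauchy estimates
\[
\sup_{K_1}|S_k^{(r)}| \;\le\; \frac{r!}{\delta^{r}}\sup_{K_2}|S_k|
\]
are available. The goal is to prove inductively that $\sup_{K_2}|S_n|\le A C^n\,n!$ for appropriate constants $A, C$ depending on $K_2$ and $\delta$. The cleanest way to close the induction is to pass to a scalar majorant series $T(t,\eta) = \sum_{n\ge 1} T_n(t)\eta^{-n}$ with nonnegative coefficients satisfying an inequality obtained from $P_n$ by replacing the $S_k^{(r)}$ by their Cauchy majorants and the polynomial coefficients by their suprema on $K_2$; this majorant can be recast as a single functional equation $T = \Psi(t,T,\eta^{-1})$ with $\Psi$ analytic near the origin. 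Applying the analytic implicit function theorem gives $T_n(t)\le A C^n n!$, and componentwise comparison yields the same bound for $|S_n|$ on $K_2$, hence on $K_1$ and then (after integrating along a bounded path in the simply connected set $U$) on $K$ for the coefficients $f_n$ of the WKB-series. This is precisely the pre-Borel-summability estimate.

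The main obstacle is the combinatorial bookkeeping in Step~3: the Fa\`a~di~Bruno-type expansion in $F_j$ produces sums over partitions $\pi(l,u)$ with up to $(M-1)$-fold derivatives of previously computed $S_k$, and a naive estimate overshoots to $(Mn)!$. The majorant/implicit-function device is what forces the growth down to the correct order $n!$, because all the derivative losses get absorbed into the single analytic function $\Psi$, which automatically encodes the correct Gevrey-$1$ behavior of the formal solution.
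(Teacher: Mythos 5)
Your plan takes a genuinely different route from the paper. The paper does not prove the Gevrey estimate from scratch: it observes that the Riccati equation $F_j=0$ of Lemma~\ref{Fj} has polynomial coefficients in $\epsilon$ (hence Gevrey order $s=0$), checks that $\partial F_j/\partial x_{M-1}=\epsilon^{M-1}\not\equiv 0$, computes the Newton-polygon data $h_1=0$, $h_2=1$, $m_{h_1}=0$, $m_{h_2}=1$, so $\rho_2=1$ and one is in Case~A, and then invokes Sibuya's Theorem~\ref{Sib} to conclude Gevrey order $\max(1/\rho_2,s)=1$, i.e.\ pre-Borel-summability. What you propose instead is to redo Sibuya's estimate directly by Cauchy estimates and majorants. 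That is a legitimate and instructive aim, but your sketch has a genuine gap at the decisive step.

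The problem is in the induction scheme and in the claim ``applying the analytic implicit function theorem gives $T_n(t)\le A\,C^n\,n!$.'' If $T=\Psi(t,T,\epsilon)$ with $\Psi$ analytic near the origin, the analytic implicit function theorem produces a \emph{convergent} power series, i.e.\ $T_n\le A\,C^n$ with no factorial; this is Gevrey order $0$, not $1$, and would wrongly conclude that the WKB series converges, contradicting the general divergence recalled in the introduction. The factorial cannot be extracted from a fixed-domain majorant equation. Concretely, your nested-compact setup is already inconsistent: the recursion $M w_j^{M-1}S_n=P_n(z,S_1,\dots,S_{n-1},S_1',\dots,S_{n-1}^{(M-1)})$ expresses $S_n$ on a compact $K$ in terms of \emph{derivatives} of lower-order $S_k$'s on the \emph{same} $K$, which by Cauchy requires $S_k$ on a strictly larger compact; iterating, $S_1$ must live on a domain that grows with $n$, and for a bounded $U$ this runs out of room. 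The standard fix, and the actual content of Sibuya's theorem, is a shrinking-domain (Nagumo-norm) argument: one lets the domain contract by roughly $\delta/n$ per step, so the Cauchy loss at stage $n$ is $O(n/\delta)$ and the cumulative loss over $n$ stages is $O(n!/\delta^n)$. The factorial is produced by this domain bookkeeping together with the Newton-polygon slope $\rho_2=1$ of the Riccati equation (encoding that each extra derivative comes weighted by an extra power of $\epsilon$); it does not come out of an implicit function theorem applied to a fixed analytic $\Psi$. An equivalent alternative is to transfer the problem to the Borel plane and prove analyticity of $\sum S_k(z)y^{k-1}/(k-1)!$ near $y=0$ by a Cauchy--Kovalevskaya-type argument, but that too is substantially more delicate than a scalar implicit function equation. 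To make your route rigorous you would in effect have to reprove Sibuya's Theorem~1.2.1 for this equation, which is precisely what the paper avoids by verifying its hypotheses and citing it.

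A minor additional omission: the lemma concerns the WKB solution $\eta^{-1/2}\exp(\int_{z^*}^z S\,d\zeta)$, not the Riccati solution $\phi=\sum_{k\ge 1}S_k\epsilon^k$ itself; the reduction from the former to the latter (which the paper justifies via \cite[Chap.~II, Prop.~2.1.2]{SKK73}) should be stated, since the exponential and the integration over a bounded path in the simply connected set $U$ preserve the Gevrey-1 class but this is a step, not a tautology.
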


\proof
For convenience, we introduce the parameter $\epsilon = 1/\eta$. Following \cite[Chap.II, Prop. 2.1.2]{SKK73}, it suffices to establish the pre-Borel summability of the formal series $\phi(z,\epsilon) = \sum_{k=1}^{\infty} S_k(z) \epsilon^{k}$ on every compact subset $K \subset U$. Here, the coefficients $S_k$ are determined such that $\phi$ is a formal solution of the Riccati equation   
$$
F_j\left(x, x', x'', \dots, x^{(M-1)}, z, \epsilon\right) = 0,
$$
where $F_j$ is defined as in Lemma~\ref{Fj}. This operator can be expanded as
$$
F_j(x_0, x_1, \dots, x_{M-1}, z, \epsilon) = \sum_{|m|=0}^{N} x_0^{m_0} \cdots x_{M-1}^{m_{M-1}} F_{j,m_0, \dots, m_{M-1}}(z, \epsilon),
$$
where $\dsty F_{j,m_0, \dots, m_{M-1}}(z, \epsilon) = \sum_{m=0}^{N_{m_0,m_1,\ldots,m_{M-1}}} F_{j,m_0, \dots, m_{M-1};m}(z) \epsilon^m$ is a polynomial in $\epsilon$ with coefficients in $\mathcal{H}(U)$. Note that $F_{j,m_0, \dots, m_{M-1}}$ is of Gevrey order $1$ in $\epsilon$ uniformly in $z \in U$; specifically, there exist constants $K_1, K_2 \geq 0$ such that 
\begin{equation}\label{Gev}
|F_{j,m_0, \dots, m_{M-1}}(z)| \leq K_1 m! (K_2)^m,
\end{equation}
for $z \in U$ and all indices within the specified ranges.

The proof rests on Sibuya's theorem regarding the Gevrey summability of formal power series, see Theorem~\ref{Sib} in Appendix \S~\ref{append:III}. (An interested reader should take a look at this material before reading the proof). Using  Lemma \ref{Fj}, a direct  calculation shows that 
$$
\frac{\partial F_j}{\partial x_{M-1}}\left(\phi, \phi', \dots, \phi^{(M-1)}, z, \epsilon\right) = \epsilon^{M-1}.
$$
Hence, condition \eqref{Dxl} for  Theorem~\ref{Sib} is satisfied. Furthermore, applying \eqref{Alt} yields
\begin{align}
\label{x0} \frac{\partial F_j}{\partial x_0}\left(\phi,\frac{d\phi}{dz},\frac{d^2\phi}{dz^2}, \ldots, \frac{d^{M-1}\phi}{dz^{M-1}},z,\epsilon\right) &= M(\phi+w_j)^{M-1} + \sum_{k=1}^{M-1} \epsilon^k \sum_{l=M-k}^{M} \frac{\rho_l}{\rho_M} \frac{\partial \mathbf{B}_{l,M-k}}{\partial x_0}, \\
\label{x1} \frac{\partial F_j}{\partial x_1}\left(\phi,\frac{d\phi}{dz},\frac{d^2\phi}{dz^2}, \ldots, \frac{d^{M-1}\phi}{dz^{M-1}},z,\epsilon\right) &= \epsilon \frac{M(M-1)}{2}(\phi+w_j)^{M-2} + \sum_{k=2}^{M-1} \epsilon^k \sum_{l=M-k}^{M} \frac{\rho_l}{\rho_M} \frac{\partial \mathbf{B}_{l,M-k}}{\partial x_1},
\end{align}
where the arguments of the Bell polynomials $\mathbf{B}_{l,M-k}$ are $(\phi+w_j, \dots, \phi^{(l-M+k)}+w_j^{(l-M+k)})$.

From \eqref{x0}, \eqref{x1}, \eqref{ahm}, and \eqref{hk} of \S~\ref{append:III}, we have that  $h_1=0$ and $h_2=1$. This corresponds to Case $A$ in \cite[Th. 1.2.1]{Si03}. Consequently, from \eqref{x0} the relation \eqref{CRelCas} in the Appendix reduces to 
$$
\left.\mathcal{T}[y]\right|_{\epsilon=0} = M w_j^{M-1} y.
$$
Additionally, we find $m_{h_1}=0$ and $m_{h_2}=1$. From \eqref{rhonu} in the Appendix, we obtain
\begin{eqnarray}\label{rho2}
\rho_2=1.  
\end{eqnarray}
 By Theorem~\ref{Sib}, the formal series $\phi$ possesses a Gevrey order of $\max(1/\rho_2, s)$, which by \eqref{Gev} and \eqref{rho2} equals $1$. Thus, $\phi$ is pre-Borel summable on every compact subset $K \subset U$. It follows that any formal WKB solution is likewise pre-Borel summable on $K \subset U$.
\lqqd


%
%
%

 \begin{lemma}\label{Ext}
Given $z^*\notin \mathcal{Z}$, $z_k\in \mathcal{Z}$, let  $U_{z_k}$ be a neighborhood of $z_k$, and    
$$\dsty \psi_j(z,\eta,z^*)=\eta^{-\frac{1}{2}}\exp\left(\int_{z^*}^zS(\zeta,\eta)d\zeta\right), z\in U_{z_k}\cap\Omega$$  be a WKB-solution of \eqref{OPer}.  Then, there exists $\alpha_n\in \ZZ_{\geq 0}$ such that, 
$$S_n((z-z_k)^{M})(z-z_k)^{\alpha_n}\in \mathcal{H}(U_{z_k}), \quad  n\geq 0.$$
 \end{lemma}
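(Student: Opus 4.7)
The plan is to proceed by induction on $n$, using the recursive structure of the $S_n$'s that comes from substituting the formal series $S(z,\eta)=\sum_{n\ge 0}S_n(z)\eta^{1-n}$ into the Riccati equation $F_j=0$ of Lemma \ref{Fj}. Writing $\rho_M(z)=(z-z_k)^{m_k}\tilde{\rho}(z)$ with $\tilde{\rho}(z_k)\ne 0$, the base case reduces to analyzing $S_0(z)=w_j(z)=e^{2\pi\imath(j-1)/M}(z-z_k)^{-m_k/M}\tilde{\rho}(z)^{-1/M}$. Since $\tilde{\rho}(z)^{-1/M}$ is holomorphic near $z_k$, $S_0$ is a convergent Puiseux series in $(z-z_k)^{1/M}$ with smallest exponent $-m_k/M$. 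Substituting $(z-z_k)\mapsto(z-z_k)^M$ in this expansion produces a convergent Laurent series in $z-z_k$ with a pole of order exactly $m_k$, so $\alpha_0=m_k$ suffices.

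For the inductive step, I would extract the coefficient of $\eta^{-(n-1)}$ in $F_j=0$. Since the leading-in-$\phi$ term of the Riccati equation is $(\phi+w_j)^M-\rho_M^{-1}$, the only linear contribution in $S_n$ comes from this leading piece and is equal to $Mw_j^{M-1}S_n$; the remaining terms form a polynomial $R_n$ in $S_0,\ldots,S_{n-1}$ and their derivatives (up to order $M-1$) with coefficients rational in $\rho_1/\rho_M,\ldots,\rho_{M-1}/\rho_M$. Using the relation $\rho_M w_j^M=1$ to rewrite $w_j^{M-1}=(w_j\rho_M)^{-1}$, the recursion becomes
\[
S_n(z)=\frac{w_j(z)\rho_M(z)}{M}\,R_n(z).
\]
By the induction hypothesis, each $S_\ell$ for $\ell<n$ is a convergent Puiseux series in $(z-z_k)^{1/M}$ with only finitely many negative-exponent terms; since the ring of convergent Puiseux series is closed under sums, products, and differentiation, and $w_j$ is invertible in the corresponding field, so is $S_n$. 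Replacing $(z-z_k)$ by $(z-z_k)^M$ in the Puiseux expansion of $S_n$ then produces a convergent Laurent series in $z-z_k$ with a pole of some finite order $\alpha_n\in\ZZ_{\ge 0}$, and multiplication by $(z-z_k)^{\alpha_n}$ gives the desired element of $\mathcal{H}(U_{z_k})$.

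The main technical point to verify is the convergence of the Puiseux expansions on some common neighborhood $U_{z_k}$, rather than just their existence as formal series. This can be handled either by Cauchy-type majorization on the recursive formula above, using that $\tilde{\rho}$ and each $\rho_\ell/\rho_M$ admit convergent Laurent expansions on a common punctured disk around $z_k$, or by appealing to the Gevrey bounds already established in the proof of Lemma \ref{pre-borel}, which provide the required uniform control of the coefficients of the formal series $S_n$ on compacts of a punctured neighborhood of $z_k$.
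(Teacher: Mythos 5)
Your argument is correct, and it takes a genuinely different route from the paper's. The paper rewrites the scalar ODE as a first-order system $\epsilon u' = M(z,\epsilon)u$, diagonalizes $M(z,0)$ by a matrix $Q$ with entries in the algebraic function field $\mathcal{M}(U_{z_k})(w_1,\ldots,w_M)$, and then carries out a Sibuya/Wasow-style formal block diagonalization by solving Sylvester equations $\Lambda_0 P_r - P_r\Lambda_0 = \Lambda_r - H_r$ at each order; the distinctness of the $w_j$'s guarantees unique solvability, and since the recursion never leaves the field $\mathcal{M}(U_{z_k})(w_1,\ldots,w_M)$, the entries $\Lambda_r$ (whence the $S_n$) have the required algebraic structure, with Lemma~\ref{pre-borel} invoked at the end for the analytic side. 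You instead work directly on the scalar Riccati equation of Lemma~\ref{Fj}: you isolate the unique linear-in-$S_n$ contribution $Mw_j^{M-1}S_n$ coming from the top-degree term $(\phi+w_j)^M$ and solve explicitly, $S_n = -\tfrac{w_j\rho_M}{M}R_n$ with $R_n$ a differential polynomial in $S_0,\dots,S_{n-1}$ over the ring of meromorphic coefficients. The two arguments encode the same underlying fact—namely that every $S_n$ lies in the degree-$M$ extension of the local meromorphic function field by $w_j$, hence is a Puiseux series in $(z-z_k)^{1/M}$ with a finite-order pole—but yours is more elementary and explicit (no companion matrix, no Sylvester equation, and the induction is completely transparent), while the paper's is closer to the standard formal-reduction machinery for systems, which makes the connection to Sibuya's framework (used in Lemma~\ref{pre-borel}) more direct. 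Two small remarks: the index in ``coefficient of $\eta^{-(n-1)}$'' should be $\eta^{-n}$, and your final caveat about convergence is actually moot—once each $S_\ell$, $\ell<n$, is a convergent Laurent series in $(z-z_k)^{1/M}$ on a fixed punctured disk, so are its derivatives and the rational coefficients $\rho_\ell/\rho_M$, so $S_n$ inherits a convergent expansion on the same punctured disk automatically; no majorization or appeal to the Gevrey bounds of Lemma~\ref{pre-borel} is needed for this particular conclusion.
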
 

  \proof
   Using  $\dsty \eta=\frac{1}{\epsilon}$ and  letting  $u_1=y,u_2=\epsilon y^{\prime},\ldots,u_{M}= \epsilon^M y^{(M)}$, we have that  \eqref{Eq1} can be expressed as 
 \begin{equation}\label{Alt0}
\epsilon u^{\prime}=M(z,\epsilon)u,
\end{equation}
where $u=(u_1,\ldots ,u_M)^t$ and 
\begin{eqnarray*}
\dsty M(z,\epsilon)&=& \det 
\begin{pmatrix}
           0        &1           &     0       &   \cdots        &          0  \\
        0            & 0         &1            &      \cdots      &          0   \\
            &      &    \vdots           &        &     \\
                        &      &    \vdots           &        &  1   \\
 -\frac{1}{\rho_M(z)} &  -\frac{\rho_{1}(z)}{\rho_M(z)}\epsilon^{M-1}  &  \cdots      & & -\frac{\rho_{M-1}(z)}{\rho_M(z)}\epsilon
\end{pmatrix}\\
&=&\sum_{k=0}^{M-1}M_k(z)\epsilon^k.
\end{eqnarray*}  

Notice that  $M(z,0)$ is diagonalizable in $\Omega$ with the eigenvalues given by $(w_j(z))_{j=1}^{M}$, where $w_j$'s are defined as in \eqref{wj}; note also  that $w_j$ is a solution of the equation $\rho_M(z)w^M-1=0$.    Let $\mathcal{M}(U_{z_k})(w_1,\ldots,w_M)$ be the smallest functional field  containing  $w_1,\ldots,w_M$ and  the set of meromorphic functions in $U_{z_k}$,   cf. \cite[Th.3 p.512 \& Def.1 p.517]{DuFo04} \& \cite[Case I p.315]{Fra21}. Then  by \cite[Cor.25 p.494]{DuFo04}  there exists an invertible matrix $Q(z)$ with elements in the field $\mathcal{M}(U_{z_k})(w_1,\ldots,w_M)$  such that 
$$M_0(z)=Q(z)\Lambda_0(z)Q^{-1}(z),$$
where $\Lambda_0(z)=diag(w_1(z),\ldots, w_M(z))$.

Hence, if $U$ is a fundamental system for  \eqref{Alt0}, we have that  using the substitution $U=QY$ one obtains the matrix equation
\begin{equation}\label{Alt2}
 \epsilon Y^{\prime}=(\Lambda_0(z)+A(z,\epsilon))Y, \quad z\in U_{z_k}\cap \Omega,
\end{equation}
where $\dsty A(z,\epsilon)=\sum_{k=1}^{M-1}A_k(z)\epsilon^k$ is a matrix polynomial of degree $M-1$ in the variable $\epsilon$,   with $A_k(z)=Q(z)M_k(z)Q^{-1}(z)$ and $A_k\in \mathcal{M}(w_1,\ldots,w_M)$.

Using the formal series   $\dsty \Lambda(z,\epsilon)=\sum_{k=0}^{\infty}\Lambda_k(z)\epsilon^k, P(z,\epsilon)=\sum_{k=0}^{\infty}P_k(z)\epsilon^k, P_0(z)=I$, substituting  $Y=PZ$ in \eqref{Alt2}, and collecting powers of $\epsilon$, we obtain the recurrence relation 
$$\Lambda_0 P_r-P_r\Lambda_0=\sum_{s=0}^{r-1}(P_s\Lambda_{r-s}-A_{r-s}P_s)+  P^{\prime}_{r-1}, r>0, $$
which can be expressed as 
$$\Lambda_0 P_r-P_r\Lambda_0=\Lambda_r-H_r.$$
Here $H_r$ depends only on $P_j,P_j^{\prime}$ and $\Lambda_j$ with $j<r$. Choosing $\Lambda_r=diag(\lambda_{j,j}(r)), \lambda_{j,j}(r)=h_{j,j}(r)$, where $H_r=(h_{j,j}(r))$, we define  $P_r$ as the  solution of the non-homogeneous Sylvester equation 
\begin{eqnarray}\label{Sylvs}
\Lambda_0 P_r-P_r\Lambda_0=C
\end{eqnarray}
in the field $\mathcal{M}(U_{z_k})(w_1,\ldots,w_M)$, where $C$ is an anti-diagonal matrix with entries in $\mathcal{M}(U_{z_k})(w_1,\ldots,w_M)$. Notice that the entries of the matrix in the leftt-hand side of \eqref{Sylvs} are 
$$(\lambda_i(0)-\lambda_j(0))p_{i,j}(r),$$
where $P_r=(p_{i,j}(r))$. Using the fact that the eigenvalues of $\Lambda_0$ are distinct, we immediately obtain  that for each $r$, there exists a unique solution  $P_r\in \mathcal{M}(U_{z_k})(w_1,\ldots,w_M)$.  

 Therefore, the transformation  $Y=PZ$ reduces  \eqref{Alt2} to 
\begin{equation}\label{Alt3}
\epsilon Z^{\prime}=\Lambda Z,
\end{equation}
 where  $\dsty \Lambda(z,\epsilon)=\sum_{k=0}^{\infty}\Lambda_k(z)\epsilon^k, P(z,\epsilon)=\sum_{k=0}^{\infty}P_k(z)\epsilon^k, P_0(z)=I$. 
 
 Relations  \eqref{Alt2} and \eqref{Alt3} imply that a formal fundamental system for \eqref{Alt0} is given by $U=QPe^{\frac{1}{\epsilon}\int^z\Lambda(t,\epsilon)dt}$.  Recalling that $u_1=y$ and that the matrices $Q$ and $\Lambda$ have entries in  the field $\mathcal{M}(U_{z_k})(w_1,\ldots,w_M)$, the first row of $U$ gives the desired WKB-solution
 $$\exp\left( \epsilon^{-1}\int_{z^*}^zS_{0}(\zeta)d\zeta\right)\sum_{n\geq 0}\epsilon^{n+\frac{1}{2}}\psi_{n}(z).$$ 
 
 Finally, by Lemma \ref{pre-borel} we have that $S(z,\epsilon)=\dsty \frac{1}{\epsilon}\sum_{k=0}^{\infty}S_k(z)\epsilon^{k}$  is pre-Borel-summable  in every open simply  connected subset $U\subset \Omega$, which means that for every $n\geq 0$,  one has that  $S_n((z-z_k)^{M})(z-z_k)^{\alpha_n}\in \mathcal{H}(U_{z_k})$, for some $\alpha_n\in \ZZ_{\geq0}$.  \lqqd


%
%
%
%

 \begin{lemma}\label{Sng}
Let  $z^*\notin\mathcal{Z}$ be a fixed reference point and  $z\in \Omega$.  Then,  the Borel transform $\psi_{j,B}(z,y,z^*)$   has singularities  at $\dsty y=-\int_{z^*}^{z}w_j(t)dt$ and the remaining singularities are included in  the set  $\dsty \left\{(z,y,z^*):y=-\int_{z^*}^{z}w_l(t)dt, l=1,\ldots,M; l\neq j\right\}$. 
 \end{lemma} 
 
  \proof Let $$\dsty \psi_{j,B}(z,y,z^*)=\sum_{n\geq 0}\frac{f_n(z,z^*)}{\Gamma(n+\frac{1}{2})}(y+y_0(z,z^*))^{n-\frac{1}{2}}$$ be the Borel transform of $\psi_j$, 
where  $
\dsty y_0(z,z^*)=\int_{z^*}^zw_j(\zeta)d\zeta$.  By Lemma \ref{pre-borel}, there exists a simply connected open set
$\Omega^\circ \subset \Omega \setminus \mathcal Z$ containing $z^*$, such that  the expression given by \eqref{BTR} is  an analytic  solution of \eqref{PDE} when $z\in \Omega$ and    $|y+y_0(z,z^*)|<\delta$, for sufficiently small  $\delta>0$. We shall regard $\psi_{j,B}(z,y,z^*)$ as a single-valued holomorphic
function on a domain of the form
$\Omega^{\circ} \times \Omega_Y$, where $\Omega^{\circ} \subset
\Omega \setminus \mathcal Z$ is a fixed simply connected domain
containing $z^*$, and $\Omega_Y \subset \mathbb C$ is maximal with
respect to analytic continuation in the $y$–variable.

Notice that $\psi_{j,B}(z,y,z^*)$   has the obvious  singularity  $\dsty y=-\int_{z^*}^{z}w_j(t)dt$.  Suppose that    $\psi_{j,B}(z,y,z^*)$ has another  singularity at a point $(z,y)=(z^{\prime},y^{\prime}), z^{\prime}\in \Omega$. All subsequent arguments are carried out in a
simply connected domain $\Omega^\circ \subset \Omega \setminus \mathcal Z$
that contains both $z^*$ and $z'$. We have that   $(z^{\prime},y^{\prime})$ is in the singular support of $\psi_{j,B} $, considered  as a  distribution which we denote by $u_{\psi}$. From \cite[Cor.7.2.2 p.249]{DuHo94} and \cite[Th.6 p.44 (complex version)]{Ni73}, one obtains that   the bicharacteristic curve  $(z(t),y(t),\zeta(t),\epsilon(t))$  defined by the equations  \eqref{Ha1}-\eqref{Ha5} and emanating from $\left(z^{\prime},y^{\prime},  \frac{1}{\sqrt[M]{\rho_M(z^{\prime})}} ,1\right)$ belongs to  $\mathcal{W\!F}(u_{\psi})$, which is the \textcolor{blue}{\emph{wave front}} of $u_{\psi}$. A straightforward calculation shows that 
\begin{eqnarray}\label{BicAn}
(z(t),y(t),\zeta(t),\epsilon(t))=\left(\Psi_l^{-1}(t,z^{\prime}),-Mt+y^{\prime}, \frac{1}{\sqrt[M]{\rho_M(z(t))}} ,1\right),
\end{eqnarray}
where $\Psi_l^{-1}$ denotes the inverse  of   $\dsty \Psi(z,z^{\prime})=\frac{1}{M}\int_{z^{\prime}}^z\frac{dt}{\sqrt[M]{\rho_M(t)}}, z\in \Omega$. Here, the index $l$ refers to the branch of the chosen  root. The relation  \eqref{BicAn} implies that the $y$-component of $(z(t),y(t),\zeta(t),\epsilon(t))$ can be expressed as 
\begin{eqnarray}\label{BicAn1}
 y_l(z)=-\int_{z^{\prime}}^z\frac{dt}{\sqrt[M]{\rho_M(t)}}+y^{\prime}.
\end{eqnarray}
  Since  $\mathcal{W\!F}(u_{\psi})$ is a  closed set (cf. \cite[\S 8 p.41]{Ni73}), we obtain that the    point $(z^*,y_j(z^*))$ is in the singular support of the distribution $u_{\psi}$.

 We want to prove that
\[
\lim_{\substack{z\to z^*\\ z\in\Omega^{\circ}}} y_l(z)=0.
\]
Indeed, by the definition of $y_0(z,z^*)$ and of the coefficients
$f_n(z,z^*)$, and since $\Omega^{\circ}$ is simply connected and contains
$z^*$, we obtain
\begin{eqnarray}
\begin{aligned}\label{f_n}
\lim_{\substack{z\to z^*\\ z\in\Omega^{\circ}}} y_0(z,z^*)&=0,\\
\lim_{\substack{z\to z^*\\ z\in\Omega^{\circ}}} f_0(z,z^*)&=1,\\
\lim_{\substack{z\to z^*\\ z\in\Omega^{\circ}}} f_n(z,z^*)&=0,
\qquad n\geq 1.
\end{aligned}
\end{eqnarray}

Using the expression for $\psi_{j,B}$ together with
\eqref{f_n}, we conclude that $(z^*,0)$ is the unique point of the
singular support of the distribution $u_{\psi}$ lying above the base
point $z=z^*$.

Therefore, taking the limit $z\to z^*$ with $z\in\Omega^{\circ}$ in
\eqref{BicAn1}, we obtain
\[
y^{\prime}=\int_{z^{\prime}}^{z^*}\frac{dt}{\sqrt[M]{\rho_M(t)}}.
\]
This shows that, for any $z^{\prime}\in\Omega^{\circ}$, the
$y$-components of the singularities of the Borel transform
$\psi_{j,B}$ with reference point $z^*$ satisfy
\[
y_l=-\int_{z^*}^{z^{\prime}}\frac{dt}{\sqrt[M]{\rho_M(t)}}.
\]

 \lqqd

 The following two theorems play an important role in the estimation of the growth of the analytic continuation of the power series  of Lemma \ref{ACont}. 

\begin{theorem}[LeRoy \& Lindel\"of \cite{lind05}, \cite{Dien57} pp. 340-345, \cite{Ara85}]\label{Leroy}
 For a function $\varphi \in \mathcal{H}(\{z:\Re[z]\geq 0\})$ of exponential type $\sigma<\pi$, the series 
 $$\dsty f(z)=\sum_{k=0}^{\infty}\varphi(n)z^n$$
 admits an analytic continuation to the sector $\CC\setminus \{z\in \mathbb{D},|\arg[z]|\leq 2\sigma \}$. Moreover, $f(z)\rightarrow 0 $ when $z\rightarrow \infty$ in each angular domain $\CC\setminus \{z\in \mathbb{D},|\arg[z]|\leq \frac{\beta}{2} \}, \beta \in (2\sigma,2\pi)$.
  
\end{theorem}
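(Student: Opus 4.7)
The plan is to prove this via the classical Lindelöf contour-integral representation. The starting observation is that on the disk of convergence $|z|<1$, one may encode $\sum_{n\ge 0}\varphi(n)z^n$ as a contour integral picking up residues at the non-negative integers of the kernel $\pi/\sin(\pi s)$. Concretely, for a vertical contour $\mathcal{C}$ running parallel to the imaginary axis with $\Re[s]=-\epsilon$, $0<\epsilon<1/2$, closed to the right by a large semicircle in $\Re[s]>0$, Cauchy's residue theorem gives
\begin{equation*}
f(z)=\sum_{n=0}^{\infty}\varphi(n)z^n=\frac{1}{2\imath}\int_{\mathcal{C}}\varphi(s)\,\frac{(-z)^s}{\sin(\pi s)}\,ds,
\end{equation*}
once it is shown that the contribution of the large semicircle vanishes. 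The latter uses the hypothesis $|\varphi(s)|\le C_\delta e^{(\sigma+\delta)|s|}$ combined with the exponential growth of $|\sin(\pi s)|^{-1}$ being controlled away from integers; for $|z|<1$ the extra factor $|(-z)^s|=|z|^{\Re[s]}e^{-\arg(-z)\Im[s]}$ decays, which secures convergence.

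Having obtained this integral representation, I would next establish that it is an analytic function of $z$ in a domain larger than $|z|<1$, namely the sector $\{z:|\arg(-z)|<\pi-\sigma\}$. The key estimate is, for $s=-\epsilon+\imath t$,
\begin{equation*}
\left|\varphi(s)\,\frac{(-z)^s}{\sin(\pi s)}\right|\le C\,e^{\sigma|t|}\,|z|^{-\epsilon}\,e^{-\arg(-z)\,t}\,e^{-\pi|t|}
=C\,|z|^{-\epsilon}\,\exp\!\bigl(-(\pi-\sigma)|t|-\arg(-z)\,t\bigr),
\end{equation*}
which is integrable in $t$ precisely when $|\arg(-z)|<\pi-\sigma$. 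Uniform convergence on compact subsets of this sector gives holomorphicity of the integral there, and since it agrees with $f(z)$ on $|z|<1$, it provides the desired analytic continuation to $\mathbb{C}\setminus\{z\in\overline{\mathbb{D}}:|\arg z|\le 2\sigma\}$ (the excluded sector $|\arg(-z)|\ge \pi-\sigma$ translates to a sector of opening $2\sigma$ around the positive real axis, combined with the retained closed unit disk on that side).

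Finally, for the decay statement $f(z)\to 0$ as $z\to\infty$ in any closed angular subdomain where $|\arg(-z)|\le \pi-\beta/2$ with $\beta\in(2\sigma,2\pi)$, I would exploit the factor $|z|^{-\epsilon}$ and take $\epsilon$ as close to $1/2$ as the residue computation allows; more precisely, one shifts $\mathcal{C}$ to $\Re[s]=-N-\epsilon$ for $N\in\mathbb{N}$ (picking up polynomial corrections from the residues at $s=-1,-2,\ldots,-N$ which are zero since $\varphi$ is evaluated only at non-negative integers in the original series — in fact the integrand has no residues there for $\Re[s]<0$). The resulting bound $|f(z)|\le C|z|^{-N-\epsilon}$ yields the decay.

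The main obstacle I expect is the justification of closing the contour at infinity, since one must balance $e^{\sigma|s|}$ from $\varphi$ against $|\sin(\pi s)|^{-1}$ on the large arc: the condition $\sigma<\pi$ is exactly what makes the semicircular arc contribution vanish, but this requires handling the zones near the real axis where $|\sin(\pi s)|^{-1}$ blows up at each integer, typically by deforming the arc into small detours. Secondarily, extracting optimal angles in the decay statement requires care in choosing auxiliary rays $\Re[s]=c\Im[s]$ rather than purely vertical contours, but this is a straightforward adaptation.
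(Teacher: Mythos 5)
The paper does not prove this theorem: it is quoted as a classical result and only the citations to Lindel\"of, Dienes, and Arakelyan are given. Your contour-integral approach is in fact the classical Lindel\"of proof, so on the level of strategy there is nothing to compare; I will therefore only comment on the internal correctness of your argument.

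The analytic-continuation part is essentially sound, modulo one piece of imprecision. You justify discarding the large semicircular arc by noting that $|(-z)^s| = |z|^{\Re[s]}e^{-\arg(-z)\Im[s]}$ decays ``for $|z|<1$.'' This is not sufficient near the real axis, where $|\sin(\pi s)|^{-1}$ contributes no exponential decay and the factor $|z|^{\Re[s]}$ must beat $|\varphi(s)|\le Ce^{(\sigma+\delta)|s|}$ on its own; this requires $|z|<e^{-\sigma-\delta}$, not merely $|z|<1$. The standard fix is to establish the integral representation first for $|z|$ small and then invoke the identity theorem, since both sides are already known to be analytic on the disk.

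The decay argument contains a genuine error. You propose shifting $\mathcal{C}$ to $\Re[s]=-N-\epsilon$ for $N\in\mathbb{N}$ and assert that ``the integrand has no residues there for $\Re[s]<0$.'' Both parts of this are incorrect. First, $\varphi$ is only assumed holomorphic on $\{\Re[z]\ge 0\}$ (at best, on an unspecified open neighborhood of that closed half-plane), so the contour cannot be moved to $\Re[s]=-N-\epsilon$ for $N\ge 1$: the integrand is simply not defined there. Second, $1/\sin(\pi s)$ has simple poles at \emph{all} integers, negative included, so even if $\varphi$ were defined there the residues at $s=-1,-2,\dots$ would equal $\varphi(-n)z^{-n}$ and would not generally vanish; the fact that the \emph{series} only samples $\varphi$ at non-negative integers is irrelevant to the pole structure of the kernel. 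Fortunately, no shift of that kind is needed: the estimate you already wrote for the vertical line $\Re[s]=-\epsilon$ gives $|f(z)|\le C_\epsilon\,|z|^{-\epsilon}$ uniformly in closed subsectors, and since $\epsilon>0$ this already forces $f(z)\to 0$ as $z\to\infty$. The decay statement thus follows from your continuation estimate directly, provided one reads the hypothesis $\varphi\in\mathcal{H}(\{\Re[z]\ge 0\})$ as holomorphy on some open neighborhood of the closed half-plane (so that the contour at $\Re[s]=-\epsilon$ is legitimate for small $\epsilon>0$); if instead $\varphi$ is only holomorphic in the open half-plane and continuous on the boundary, a more delicate argument near the imaginary axis is required and your proof does not supply it.

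A final remark: your own estimate shows the excluded sector has half-opening $\sigma$ (one integrates over $|\arg(-z)|<\pi-\sigma$), not $2\sigma$, and the excluded region should lie in $|z|\ge 1$, since $f$ is already analytic in the open unit disk. You reproduce the paper's wording $\{z\in\overline{\mathbb{D}}:|\arg z|\le 2\sigma\}$, which is inconsistent with your own computation as well as with the companion Theorem~\ref{Ara} in the paper, where the excluded sector has half-opening $\sigma$.
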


\begin{theorem}[Arakelyan \cite{Ara85} Th. 1.1]\label{Ara}
For $|z|<1$ and  $\sigma\in [0,\pi)$, a power series  
$\dsty \sum_{k=0}^{\infty}f_nz^n$ 
admits an analytic continuation to the sector  $\CC\setminus \{z\in \mathbb{D},|\arg[z]|\leq \sigma \}$,  if and only if there exists a function $\phi\in \mathcal{H}(\{z: \Re[z]> 0\})$ of the inner exponential type at most $\sigma$, such that 
$$c_n=\phi(n), \quad n=0,1,\ldots $$
\end{theorem}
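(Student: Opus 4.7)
\proof
The plan is to establish the two implications separately, using contour-integral representations that are dual to one another. I would write the analytic continuation as an integral transform of $\phi$, and conversely recover an interpolating function $\phi$ from the analytic continuation by an inverse transform, controlling the growth of $\phi$ on vertical lines via the Phragm\'en--Lindel\"of principle.

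For the sufficiency direction (existence of $\phi \Rightarrow$ analytic continuation), I would start from a refinement of the LeRoy--Lindel\"of representation (Theorem~\ref{Leroy}). Given $\phi \in \mathcal{H}(\{\Re z > 0\})$ of inner exponential type at most $\sigma < \pi$ interpolating the coefficients $f_n = \phi(n)$, set
$$F(z) = -\frac{1}{2i}\int_C \phi(s)\,\frac{(-z)^s}{\sin \pi s}\,ds,$$
where $C$ is a contour in $\{\Re s > 0\}$, indented around $0$, essentially running along the imaginary axis. For $|z|<1$, closing the contour to the right picks up simple poles of $1/\sin \pi s$ at $s = 0,1,2,\ldots$ with residues $(-1)^n\phi(n)/\pi = (-1)^n f_n/\pi$, which by the residue theorem reassemble $\sum_n f_n z^n$. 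For $z$ in the sector $\mathbb{C}\setminus\{|z|\le 1,\;|\arg z|\le \sigma\}$, one has $(-z)^s = e^{s\log(-z)}$ with $|\arg(-z)|$ bounded away from $\pm\pi$ in such a way that the inner-type bound on $\phi$ is dominated by the decay of $1/\sin\pi s$ on vertical rays, so the integral converges absolutely and defines an analytic function there, furnishing the desired continuation.

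For the necessity direction (analytic continuation $\Rightarrow$ existence of $\phi$), given the analytic continuation $F$ of $\sum f_n z^n$ to the sector, I would define
$$\phi(s) = \frac{1}{2\pi i}\int_\Gamma \frac{F(z)}{z^{s+1}}\,dz,$$
where $\Gamma$ is a Hankel-type contour lying inside the domain of analyticity, wrapping once around the origin and opening into the angular region $|\arg z|>\sigma$. For integer $s = n$, collapsing $\Gamma$ onto a small circle around $0$ shows by the usual residue computation that $\phi(n)=f_n$, so $\phi$ does interpolate the coefficients. The analyticity of $\phi$ in $\{\Re s > 0\}$ follows from differentiation under the integral. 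To bound $\phi$ on vertical rays $s = \tau + it$ with $\tau$ fixed and $|t|\to\infty$, one parametrizes $\Gamma$ and uses that $|z^{-s-1}| = |z|^{-\tau-1} e^{t\arg z}$, so that by deforming $\Gamma$ to hug the boundary rays $\arg z = \pm\sigma$ one obtains the desired exponential growth of rate $\sigma$ in $|t|$.

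The main obstacle is the necessity direction, and specifically the sharp verification that the interpolating function just constructed has \emph{inner} exponential type at most $\sigma$, rather than merely ordinary exponential type in the right half-plane. This requires a careful two-step argument: first establishing polynomial boundedness of $\phi$ on the positive real axis (from the integer values being bounded by a geometric sequence given the disc of convergence), and then applying a Phragm\'en--Lindel\"of principle in the right half-plane to combine this with the boundary growth estimates obtained from deforming $\Gamma$. The matching of constants — ensuring one recovers exactly $\sigma$ and not some larger value — is delicate and ultimately relies on the precise definition of the indicator function $h_\phi(\theta)$ of $\phi$ and the equality of $\sigma$ with $\max_{|\theta|<\pi/2} h_\phi(\theta)$ in the inner-type convention. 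I would refer to \cite{Ara85} for these estimates, as they constitute the technical heart of Arakelyan's original argument.
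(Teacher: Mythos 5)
This theorem is not proved in the paper: it is quoted as Theorem 1.1 of Arakelyan~\cite{Ara85} and used as a black box, together with Theorem~\ref{Leroy}, in the proof of Lemma~\ref{ACont}. There is therefore no proof of the paper's own against which to compare your sketch. On its own merits, the sketch follows the standard dual pair of contour representations: a Lindel\"of/Mellin--Barnes integral $-\frac{1}{2i}\int_C \phi(s)(-z)^s/\sin(\pi s)\,ds$ to build the continuation from the interpolating function $\phi$, and the Hankel inverse transform $\frac{1}{2\pi i}\int_\Gamma F(z)z^{-s-1}\,dz$ to recover $\phi$ from the continuation $F$. This outline is correct, and you rightly identify the crux as the necessity direction: verifying that the recovered $\phi$ has \emph{inner} exponential type at most $\sigma$ (not merely some finite exponential type in the half-plane) requires combining a bound on the positive real axis, coming from the radius of convergence being at least $1$, with the boundary-ray estimates from the deformed Hankel contour, via a Phragm\'en--Lindel\"of argument; obtaining the sharp constant $\sigma$ rather than a larger one is the delicate part. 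One smaller point your sufficiency argument glides over is that a vertical Mellin--Barnes contour only converges directly for $|\arg z|>\sigma$, and reaching the remaining part of the continuation domain (the large-$|z|$ ``shadow'' where $|\arg z|\le\sigma$) needs an extra step. Since the paper itself treats the theorem purely as a citation, deferring all these estimates to Arakelyan's original proof is consistent with how the result is used here.
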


To establish the Borel summability of $ \dsty \psi_j(z,\eta,z^*), z^*\notin \mathcal{Z}$  we introduce the Stokes curves  
\begin{eqnarray}\label{ST*}
 \mathcal{S}_{z*,j}=\bigcup_{j^{\prime}: j^{\prime}\neq j}\left \{z\in \Omega: \Im \left[\int_{z^*}^z(w_{j}(\xi)-w_{j^{\prime}}(\xi))d\xi\right]=0 \right \},
\end{eqnarray}
relative to $z^*$.

As observed in \cite{Berk82}, for higher order operators a Stokes phenomenon
can occur in a neighborhood of intersection points of Stokes curves
emanating from turning points, cf.\ Notation~\ref{LbSt}. Consequently, the
Borel sum of $\psi_j(z,\eta,z^*)$, with $z^*\notin\mathcal Z$, may fail to be
well-defined along Stokes curves produced by ordered crossings. We therefore
consider ordered crossings of curves in $\mathcal S_{z^*,j}$ together with
Stokes curves emanating from the turning points $z_k\in\mathcal Z$, and define
the corresponding collection of potentially obstructing curves as in
Definition~\ref{OC}. We denote by $\mathcal N_{ext}=\mathcal N\cup\mathcal
N_{z^*}$ the union of all such curves, where $\mathcal N_{z^*}$ refers to
those arising when $z^*$ is taken as reference point.

Although the class of operators considered here does not allow one to relate
different turning points through a single bicharacteristic strip, the
intersection of Stokes curves still signals a genuine analytic obstruction.
Indeed, at such points distinct phase integrals may exhibit identical
exponential growth, so that competing exponential contributions coexist after
analytic continuation. This coincidence provides the mechanism by which a
Stokes phenomenon may arise, independently of any direct microlocal
connectivity between turning points.

Accordingly, we assume that at ordered intersections of Stokes curves—including those involving curves emanating from turning points—the Borel transform may exhibit an obstruction to analytic continuation in the $z$-variable. Following the classical exact WKB analysis of \cite{Berk82}, we therefore exclude neighborhoods of these ordered intersections and restrict our analysis to points $z \notin \mathcal{N}_{ext}$, where the Stokes phenomenon is not expected to occur.

Away from $\mathcal N_{ext}$, standard results on analytic continuation with
respect to parameters ensure that the singularities of the Borel transform
depend holomorphically on $z$ and remain separated from the Laplace direction.
In particular, no obstruction to Borel summation arises outside
$\mathcal N_{ext}$.

From a microlocal viewpoint, propagation of singularities implies that the
wavefront set of $\psi_{j,B}$ is transported along the bicharacteristic strip.
Away from ordered intersections of Stokes curves, the microlocal contributions
associated with distinct phases remain separated after projection, preventing
any interaction in the Laplace direction. At points belonging to
$\mathcal N_{ext}$, however, this microlocal separation may fail at the level
of projections, allowing different phase contributions to interact and
producing a possible obstruction to microlocal analyticity in the Laplace
direction.

 \begin{lemma}\label{ACont}
Fix $z\in \Omega$ and $z\notin \mathcal{S}_{z*,j}\cup \mathcal{N}_{ext}$.   Then,   $\psi_{j,B}(z,y,z^*)$  can be analytically  continued through the horizontal strip  $[-y_0(z,z^*)\pm \imath\delta, t\Re[-y_0(z,z^*)]+\imath(\Im[-y_0(z,z^*)]\pm\delta) ], t>0, |\delta|<\delta_0 $. Moreover, $\psi_{j,B}(z,y,z^*)\rightarrow 0$ as $y\rightarrow \infty$ through the strip, in particular $\psi_{j,B}(z,y,z^*), y\in [-y_0(z,z^*),+\infty-\imath \Im[y_0(z,z^*)])$ is of exponential type.
 \end{lemma}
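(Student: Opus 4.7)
The plan is to combine the coefficient bounds coming from Lemma~\ref{pre-borel}, the explicit singularity locus supplied by Lemma~\ref{Sng}, and the interpolation theorems of Arakelyan and LeRoy--Lindelöf (Theorems~\ref{Ara}--\ref{Leroy}) in order to obtain analyticity, decay, and exponential type along the prescribed horizontal strip.

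Pass to the shifted variable $w := y + y_0(z,z^*)$ and write
\[
\psi_{j,B}(z,y,z^*) = w^{-1/2}\, g_z(w),\qquad g_z(w)=\sum_{n\geq 0}\frac{f_n(z,z^*)}{\Gamma(n+1/2)}\,w^{n}.
\]
The pre-Borel-summability of the WKB formal solution given by Lemma~\ref{pre-borel}, propagated through the exponential of the Riccati solution determining $S(\zeta,\eta)$, yields uniform bounds $|f_n(z,z^*)|\le A_K(C_K)^n\,n!$ on compact subsets $K\subset\Omega\setminus\mathcal{Z}$; by Stirling, $|f_n(z,z^*)/\Gamma(n+1/2)|\le A_K'(C_K')^n$, so $g_z$ is holomorphic in a disk around $w=0$. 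Arakelyan's theorem then provides an interpolant $\phi_z\in\mathcal{H}(\{\Re[\zeta]>0\})$ of inner exponential type $\sigma<\pi/2$ with $\phi_z(n)=f_n(z,z^*)/\Gamma(n+1/2)$, and Theorem~\ref{Leroy} produces an analytic continuation of $g_z$ to $\CC\setminus\{w\in\mathbb{D}:|\arg w|\le 2\sigma\}$, together with the decay $g_z(w)\to 0$ as $w\to\infty$ uniformly in any proper sub-sector of that domain.

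It remains to check that no singularity of $\psi_{j,B}$ obstructs the continuation inside the required strip. By Lemma~\ref{Sng}, the singularities of $\psi_{j,B}(z,\cdot,z^*)$ in $y$ occur precisely at $y=-y_l(z)$ for $l=1,\dots,M$, where $y_l(z)$ is the $l$-th branch of $\int_{z^*}^z dt/\sqrt[M]{\rho_M(t)}$. The hypothesis $z\notin\mathcal{S}_{z^*,j}$, in view of \eqref{ST*}, forces $\Im[y_l(z)-y_0(z,z^*)]\neq 0$ for every $l\neq j$, so no such singularity lies on the horizontal line through $-y_0(z,z^*)$; the hypothesis $z\notin\mathcal{N}_{ext}$ excludes the further obstructions coming from ordered crossings of Stokes curves in the Borel plane (the new Stokes curves). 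By continuity, there is a $\delta_0>0$ for which the whole horizontal strip of half-width $\delta_0$ around the positive real $w$-axis is free of singularities, giving the stated analytic continuation, and the decay $\psi_{j,B}(z,y,z^*)\to 0$ as $y\to\infty$ through the strip then follows from Theorem~\ref{Leroy} combined with the factor $w^{-1/2}$, which also yields the exponential-type bound along $[-y_0(z,z^*),+\infty-\imath\Im[y_0(z,z^*)])$.

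The main obstacle is verifying the type bound $\sigma<\pi/2$ needed to apply the LeRoy--Lindelöf continuation across the positive real $w$-axis, since this threshold is exactly critical; this is guaranteed by the Gevrey-$1$ character of the pre-Borel estimates from Lemma~\ref{pre-borel}. The second delicate point is to globalize, along the entire horizontal ray, the local picture of singularities given by Lemma~\ref{Sng}, and this is precisely where the combinatorial content of the hypotheses $z\notin\mathcal{S}_{z^*,j}\cup\mathcal{N}_{ext}$ is used in an essential way.
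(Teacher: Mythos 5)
Your proposal assembles the same ingredients the paper uses for this lemma --- the singularity description of Lemma~\ref{Sng}, the pre-Borel bounds of Lemma~\ref{pre-borel}, and the interpolation/continuation pair Theorems~\ref{Ara} and~\ref{Leroy} --- and reaches the same conclusion. However, the logical order in which you chain them is reversed, and as written it has a gap.

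You claim that the Gevrey-$1$ coefficient bounds (so holomorphy of $g_z$ in a disk around $w=0$) already let Arakelyan's theorem ``provide'' an interpolant $\phi_z$ of inner exponential type $\sigma<\pi/2$, and that Theorem~\ref{Leroy} then \emph{produces} the analytic continuation. That direction of Theorem~\ref{Ara} is not available here: Arakelyan's result is an equivalence, and the direction that \emph{yields} an interpolant requires first knowing that the series already continues across a sector beyond its disk of convergence. Holomorphy in a disk alone is far from sufficient. So the interpolant cannot be extracted before the continuation is established, and the continuation cannot be established by Theorem~\ref{Leroy} before the interpolant exists --- this is circular. The paper resolves the circle by putting the singularity analysis \emph{first}: Lemma~\ref{Sng} locates all singularities of $\psi_{j,B}(z,\cdot,z^*)$ at $y=-\int_{z^*}^z w_l(t)\,dt$, the hypothesis $z\notin\mathcal{S}_{z^*,j}\cup\mathcal{N}_{ext}$ forces all of them off the horizontal ray through $-y_0(z,z^*)$ (and off the strip by continuity in $\delta$), and since the Borel transform solves the PDE~\eqref{PDE} it is analytic away from those singularities --- this gives the continuation through the strip directly. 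Only \emph{then} does one apply Theorem~\ref{Ara} (in its ``continuation $\Rightarrow$ interpolant'' direction) to obtain a $\phi_z$ of sufficiently small type, and Theorem~\ref{Leroy} to conclude the decay $g_z(w)\to 0$ and the exponential-type estimate along the ray. You do eventually perform the singularity check (``It remains to check that no singularity...''), but by that point you have already asserted both the interpolant and the continuation, so the check arrives too late to justify what preceded it. Re-order the steps so that the singularity/continuation argument comes first, and the rest of your proposal matches the paper's proof.

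A minor further point: the threshold you state, $\sigma<\pi/2$, is not what is needed; after rescaling $w$ by the radius of convergence, the removed sector in Theorem~\ref{Leroy} has opening $2\sigma$ inside the unit disk, and what matters for crossing the unit circle near the positive real axis is simply $\sigma<\pi$, which the continuation through the strip supplies. No sharper bound is required, nor does the Gevrey-$1$ estimate of Lemma~\ref{pre-borel} on its own control $\sigma$.
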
 
 
 \proof   By Lemma~\ref{Sng}, for $z\notin\mathcal{S}_{z^*,j}$, the Borel transform
$\psi_{j,B}(z,y,z^*)$ has only a finite number of isolated singularities
in the $y$--plane, located at the points
\[
y=-\int_{z^*}^{z} w_l(t)\,dt,\qquad l=1,\ldots,M.
\]
Moreover, no singularity other than $y=-y_0(z,z^*)$ lies on the horizontal
line through $y=-y_0(z,z^*)$. Consequently, $\psi_{j,B}(z,y,z^*)$ can be
analytically continued along the ray
\[
y=-y_0(z,z^*)+t,\qquad t>0,
\]
and in a horizontal strip around this ray, avoiding the remaining singular
points.

Furthermore, consider the shifted series
\[
(-y+y_0(z,z^*))^{\frac12}\psi_{j,B}(z,-y,z^*)
=\sum_{n=0}^{\infty}\frac{f_n(z,z^*)}{\Gamma(n+\frac12)}(-y+y_0(z,z^*))^{n}.
\]
By Theorem~\ref{Ara}, the coefficients
$c_n=\Gamma(n+\tfrac12)^{-1}f_n(z,z^*)$
admit a holomorphic interpolation in $\{\Re\xi>0\}$ of inner exponential
type strictly less than $\pi$. Consequently, Theorem~\ref{Leroy} implies
that $\psi_{j,B}(z,y,z^*)\to 0$ exponentially as $y\to+\infty$ within the
horizontal strip considered above.

Finally, we explain the role of the assumption $z\notin\mathcal{N}_{ext}$.
By construction, the set $\mathcal{N}_{ext}$ collects points where ordered
intersections of Stokes curves occur, including those involving curves
emanating from turning points. At such points, distinct phase integrals may
exhibit identical exponential growth after analytic continuation, so that
their associated Borel singularities may align with the Laplace integration
direction. In this situation, the singularity of $\psi_{j,B}(z,y,z^*)$ at
$y=-y_0(z,z^*)$, which is guaranteed by Lemma~\ref{Sng}, may fail to be
isolated with respect to horizontal continuation, and the construction of a
horizontal strip avoiding all singularities in the $y$--plane can no longer
be ensured.

Away from $\mathcal{N}_{ext}$, no such alignment occurs. The singularities of
the Borel transform remain separated from the Laplace direction and depend
holomorphically on the parameter $z$. Consequently, $\psi_{j,B}(z,y,z^*)$
admits analytic continuation in a horizontal strip containing the ray
$y=-y_0(z,z^*)+t$, $t>0$, along which the Laplace integral defining the Borel
sum is well defined. This justifies the exclusion of $z\in\mathcal{N}_{ext}$
in the statement of the lemma and completes the proof.

 \lqqd

\begin{proposition}\label{sum_z*}
Take $z^*\in \Omega$ and let
\[
\dsty \psi_j(z,\eta,z^*)
=\exp\left( \eta \int_{z^*}^z S_0(\zeta)\,d\zeta \right)
\sum_{n=0}^{\infty}\frac{\phi_n(z)}{\eta^{n+\frac{1}{2}}},
\qquad z\in U_{z_k}\cap\Omega,
\]
be a WKB solution of \eqref{OPer} with reference point $z^*$.
Then $\psi_j$ is Borel summable at $z$ provided that
\[
z\notin \mathcal{S}_{z^*,j}\cup \mathcal{N}_{ext}
\]
\end{proposition}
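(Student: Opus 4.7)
\proof (Proof plan.) The strategy is to verify the two conditions required by Definition \ref{BT}: first, that the Borel transform $\psi_{j,B}(z,y,z^*)$ extends analytically to a neighborhood of the half-line $y\in [-y_0(z,z^*),\infty-\imath \Im[y_0(z,z^*)])$, and second, that along this path it has sufficiently controlled growth so that the Laplace-type integral $\int_{-y_0(z,z^*)}^{\infty} e^{-y\eta}\psi_{j,B}(z,y,z^*)\,dy$ converges and defines the Borel sum of $\psi_j(z,\eta,z^*)$.

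First, I would apply Lemma \ref{pre-borel} in a simply connected neighborhood $U\subset \Omega$ of $z$ that avoids the branch cut; this gives pre-Borel summability of the formal WKB-series, so that the series defining $\psi_{j,B}(z,y,z^*)$ in \eqref{BTR} converges in a disk of positive radius around $y=-y_0(z,z^*)$ and yields an analytic function of $(z,y)$ there. Next, I would invoke Lemma \ref{Sng} (with $z^*$ as reference point) to identify the singular locus of $\psi_{j,B}$ in the $y$-variable: the singularities lie precisely at the points $y_l(z)=-\int_{z^*}^z w_l(\zeta)\,d\zeta$ for $l=1,\dots,M$ (corresponding to the $M$ branches of $\sqrt[M]{\rho_M}$), plus possibly additional singularities generated by ordered crossings of bicharacteristic curves.

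The geometric input then comes from the hypothesis $z\notin \mathcal{S}_{z^*,j}\cup \mathcal{N}_{ext}$. By the definition \eqref{ST*} of $\mathcal{S}_{z^*,j}$, this hypothesis translates into
\begin{equation*}
\Im\Bigl[\int_{z^*}^z (w_j(\xi)-w_{l}(\xi))\,d\xi\Bigr]\neq 0 \quad\text{for every } l\neq j,
\end{equation*}
which means that none of the singularities $y_l(z)$ with $l\neq j$ lies on the horizontal line through $y_j(z)=-y_0(z,z^*)$. Similarly, $z\notin \mathcal{N}_{ext}$ rules out singularities produced by ordered crossings of Stokes curves and by the new Stokes curves of Definition \ref{OC}. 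Consequently there exists a horizontal strip of the form described in Lemma \ref{ACont} that is free of singularities of $\psi_{j,B}(z,y,z^*)$, and Lemma \ref{ACont} then provides both the analytic continuation along this strip and the uniform exponential-type bound (with decay to $0$ at infinity inside the strip).

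Finally, I would define
\begin{equation*}
\Psi_j(z,\eta,z^*)=\int_{-y_0(z,z^*)}^{\infty} e^{-y\eta}\,\psi_{j,B}(z,y,z^*)\,dy,
\end{equation*}
where the path is taken horizontally in the strip of Lemma \ref{ACont}. The exponential-type bound combined with $\Re[\eta]>0$ large ensures absolute convergence, and a standard Watson-type argument (integration by parts or the Gevrey-$1$ estimates used to establish Sibuya's theorem invoked inside Lemma \ref{pre-borel}) shows that the asymptotic expansion of $\Psi_j(z,\eta,z^*)$ as $\eta\to\infty$ recovers the original WKB-series term-by-term, yielding Borel summability. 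The main obstacle I anticipate is the verification that the avoidance of $\mathcal{S}_{z^*,j}\cup \mathcal{N}_{ext}$ is really strong enough: one has to check that no further singularities of $\psi_{j,B}$ can intrude on the relevant strip beyond those coming from the explicit branches $y_l(z)$ and from the ordered-crossing singularities, which requires the description of $\mathcal{BC}(t)$ from Theorem \ref{Main}(b) together with the propagation-of-singularities result of Duistermaat--H\"ormander used in Lemma \ref{Sng}. Once that structural input is in place, the construction of $\Psi_j$ and verification of the asymptotic expansion are routine.
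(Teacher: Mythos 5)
Your proposal follows essentially the same route as the paper: identify the singular locus of $\psi_{j,B}$ in $y$ via Lemma~\ref{Sng}, use the hypothesis $z\notin\mathcal{S}_{z^*,j}\cup\mathcal{N}_{ext}$ to guarantee the integration ray avoids those singularities, and invoke Lemma~\ref{ACont} for the analytic continuation and exponential-type bound needed to make the Laplace integral converge. The paper's own proof is exactly this chain, stated more tersely (it treats the pre-Borel-summability and the Watson-type recovery step as already absorbed into Lemmas~\ref{pre-borel}, \ref{Sng}, and \ref{ACont} rather than repeating them).

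One small caveat worth flagging: in your closing paragraph you suggest that ruling out additional singularities on the strip ``requires the description of $\mathcal{BC}(t)$ from Theorem~\ref{Main}(b).'' That would be circular, since Theorem~\ref{Main} (parts (a) and (b)) is proved in the paper using the present Proposition as an input. The resolution is that Lemma~\ref{Sng} already localizes all singularities of $\psi_{j,B}(z,y,z^*)$ to the points $y=-\int_{z^*}^z w_l\,d\xi$ via the Duistermaat--H\"ormander propagation argument applied directly to the Borel-transformed PDE~\eqref{PDE}; no appeal to Theorem~\ref{Main}(b) is needed. The role of $\mathcal{N}_{ext}$ is handled at the level of Lemma~\ref{ACont}, which covers the ordered-crossing singularities. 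With that substitution your sketch is sound.
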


\proof
By Definition~\ref{BT}, the Borel sum $\Psi_j(z,\eta,z^*)$ is well-defined
whenever the Laplace integration path
\[
[-y_0(z,z^*),+\infty-\imath \Im y_0(z,z^*)]
\]
does not intersect any singularity of the Borel transform
$\psi_{j,B}(z,y,z^*)$ and the latter decays exponentially along this path.

By Lemma~\ref{Sng}, the Borel transform $\psi_{j,B}(z,y,z^*)$ has a
singularity at
\[
y=-\int_{z^*}^{z} w_j(t)\,dt,
\]
while the remaining possible singularities are located at the points
\[
y=-\int_{z^*}^{z} w_l(t)\,dt, \qquad l\neq j.
\]
If $z\notin \mathcal{S}_{z^*,j}$, none of these singularities lies on the
horizontal ray issuing from $y=-y_0(z,z^*)$.

Finally, we clarify the role played by the assumption $z\notin\mathcal{N}_{ext}$.
 By Lemma~\ref{ACont}, for such values of $z$ the
Borel transform $\psi_{j,B}(z,y,z^*)$ admits analytic continuation in a
horizontal neighborhood of the Laplace direction, and no new singularities
appear in this region as $z$ varies outside $\mathcal{N}_{ext}$. In
particular, no collision or alignment of Borel singularities with the
Laplace direction occurs.

It follows that the Laplace integral defining the Borel sum of
$\psi_j(z,\eta,z^*)$ is well defined for all $z\notin\mathcal{N}_{ext}$ and
depends analytically on $z$ in this domain. Moreover, since the singularity
structure of $\psi_{j,B}$ remains stable under variation of $z$ away from
$\mathcal{N}_{ext}$, no Stokes phenomenon is encountered as long as $z$ does
not cross a curve in $\mathcal{N}_{ext}$.

This establishes the existence   of the Borel
sum of $\psi_j(z,\eta,z^*)$ for all $z\notin\mathcal{N}_{ext}$, which completes
the proof of the proposition. \lqqd

\section{Proofs of the main results}\label{Proo}

 
 We start with Theorem~\ref{Main}.

 \begin{proof} 
a)  Choose $z^*\in \Omega$, $S_n$ as in Definition \ref{WKBS}, and $\mathcal{D}(z_k)$ defined in Notation  \ref{fitn}.  Using Lemma \ref{Ext}, for $n\in \mathcal{D}(z_k)$, we can  write 
\begin{eqnarray}\label{RS}
S_n(z)=\mathfrak{b}_n(z)+\mathfrak{g}_n(z),
\end{eqnarray}
where $\dsty \mathfrak{b}_{n}(z)=\sum_{l=0}^na_{n,l} (z-z_k)^{\frac{m_{n,l}}{M}}$ such that $\frac{m_{n,l}}{M}\in \QQ_{\leq -1}$,      and $\mathfrak{g}_n$ is  such that $\mathfrak{g}_n((z-z_k)^{M})(z-z_k)^{\beta_n}\in \mathcal{H}(U_{z_k})$, for some integer $\beta_n$ satisfying $0\leq \beta_n<M$.  

Here and in what follows, the index $j$ labels a fixed branch of the
characteristic root used in the construction of the WKB solution $\psi_j$.
For each fixed $j$ and $n\geq 0$, Lemma~\ref{Ext} yields a local expansion of
$S_{j,n}$ near $z_k$ of the form
\[
S_{j,n}(z)=\sum_l a_{j,n,l}\,(z-z_k)^{m_{n,l}/M},
\qquad m_{n,l}\in\mathbb{Z},
\]
where the set of exponents $m_{n,l}/M$ is independent of the choice of branch,
while the coefficients $a_{j,n,l}$ may depend on $j$ through the chosen sheet.
No explicit relation between the coefficients on different branches is assumed
or needed in the sequel. By \eqref{RS}, we obtain   
\begin{eqnarray}\label{Dec}
\int_{z^*}^zS(\zeta,\eta)d\zeta = r_j(z,\eta,z^*)+s_j(z^*,\eta),
\end{eqnarray}
where
\begin{eqnarray*}
\dsty r_j(z,\eta,z^*)&=& 
  \sum_{\begin{subarray}{c}
n\geq 0: n\in \mathcal{D}(z_k),\\
m_{n,0}<-M
 \end{subarray}}\eta^{-n-1}\int_{\infty}^z\sum_{l=1}^na_{j,n,l} (\zeta-z_k)^{\frac{m_{n,l}}{M}}d\zeta + \sum_{\begin{subarray}{c}
n=0: n\in \mathcal{D}(z_k),\\
m_{n,0}=-M
 \end{subarray}}\eta^{-n-1}  a_{j,n,0}\ln (z-z_k)+\\
& & \sum_{\begin{subarray}{c}
n\geq 0,\\
 n\in \mathcal{D}(z_k)
 \end{subarray}} \eta^{-n-1} \int_{z^*}^z\mathfrak{g}_{j,n}(\zeta)d\zeta  +
  \sum_{\begin{subarray}{c}
n\geq 0,\\
 n\notin \mathcal{D}(z_k)
 \end{subarray}} \eta^{-n-1}\int_{z^*}^zS_n(\zeta)d\zeta,  \\ 
\dsty s_j(\eta,z^*)&=& -\sum_{\begin{subarray}{c}
n=0:\\
m_{n,0}<-M
 \end{subarray}}^{\infty}\eta^{-n-1}\int_{\infty}^{z^*}\sum_{l=1}^na_{j,n,l} (\zeta-z_k)^{\frac{m_{n,l}}{M}}d\zeta  + \sum_{\begin{subarray}{c}
n=0:\\
m_{n,0}=-M
 \end{subarray}}^{\infty} \eta^{-n-1}a_{j,n,0}\ln (z^*-z_k).
\end{eqnarray*}
Notice that  we have
$${\tt Fp}\int_{z_k}^zS(\zeta,\eta)d\zeta =\lim_{z^*\to z_k}r_j(z,\eta,z^*).$$
 

 Fix  $z^* \in U_{z_k}\cap \Omega$ and $z\in \Omega$.  By Proposition \ref{sum_z*},   $\psi_j(z,\eta,z^*)$ is Borel  summable provided that $z\notin \mathcal{S}_{z*,j}\cup \mathcal{N}_{ext}$.  By \cite[Th.188 p.237]{Hard49} the formal series  $\widetilde{\psi}_{j}(z,y,z^*)=\eta^{\frac{1}{2}}e^{-\int_{z^*}^z S_0(\zeta)d\zeta}\psi_{j}(z,y,z^*)$ is Borel summable and  from  \cite[Prop.4.109 p.108]{Cost08},     
\[
\ln\bigl(\widetilde{\psi}_j(z,\eta,z^*)\bigr)
=
\ln \phi_0(z,z^*)
+
\sum_{k\ge 1}\frac{(-1)^{k-1}}{k}
\left(
\sum_{n\ge 1}
\frac{\phi_n(z,z^*)}{\phi_0(z,z^*)}\,\eta^{-n}
\right)^k,
\]
 is also Borel summable for $z\notin \mathcal{S}_{z*,j}\cup \mathcal{N}_{ext} $ and   large positive $\eta$. Therefore,  for $z\notin \mathcal{S}_{z*,j}\cup \mathcal{N}_{ext}$,  the formal series of $\ln (  \psi_j(z,\eta,z^*))$ is Borel summable.  For $z^*\in \Omega$ close to $z_k$,   express   the Borel transform $\psi^S_{j,B}$ of $\dsty \ln (  \psi_j(z,\eta,z^*))$   as 
\begin{eqnarray}\label{Rs}
\psi^S_{j,B}(z,y,u^{*M}+z_k)&=&\psi^S_{r,j,B}(z,y,u^{*M}+z_k) + \psi^S_{s,j,B}(y,u^{*M}+z_k)-\frac{1}{2}\frac{\gamma+\ln y}{y},
\end{eqnarray}
where   $u^*$ belongs to a small   punctured  neighborhood of $0$,  $\gamma$ is the  Euler-Mascheroni constant, and $\psi^S_{r,j,B}$ and $\psi^S_{s,j,B}$  are the Borel transforms of the functions $r_j$ and $s_j$ respectively defined in \eqref{Dec}. Here the term $-\frac12(\gamma+\ln y)/y$ arises from the Borel transform
of the logarithmic factor $-\frac12\ln\eta$ in
$\ln(\psi_j(z,\eta,z^*))$, since
\[
\mathcal{B}\!\left[-\tfrac12\ln\eta\right](y)
= -\tfrac12\,\frac{\gamma+\ln y}{y}.
\]

 By expanding 
\begin{eqnarray*}
\psi^S_{r,j,B}(z,y,u^{*M}+z_k)  &=&f(y,z)+\sum_{n\in \ZZ_{\geq 0}}  f_n(y)u^{*(n+1)},\\
\psi^S_{s,j,B}(y,u^{*M}+z_k)  &=&f_{-1}(y)\ln u^*+ \sum_{n\in \ZZ_{<-1}}  f_n(y)u^{*(n+1)},
\end{eqnarray*}
 and by using   the Cauchy integral formula 
\begin{eqnarray*}
f_n(y)&=&\frac{1}{2(n+1)\pi\imath }\int_{\Gamma(0,\epsilon)}\frac{\frac{d}{d\zeta}\psi^S_{j,B}(z,y,\zeta^{M}+z_k)}{\zeta^{n+1}}d\zeta, \, n\neq -1,\\
 f_{-1}(y)&=&\frac{1}{2\pi\imath }\int_{\Gamma(0,\epsilon)}\frac{d}{d\zeta}\psi^S_{j,B}(z,y,\zeta^{M}+z_k)d\zeta, n=-1,
\end{eqnarray*}
for the Taylor coefficients,  we get  that  the  set of singularities of    $f_n(y), n\geq 0$ and   $f(y,z)-\frac{1}{2}\frac{\gamma+\ln y}{y}$  with respect to the variable $y $ is included in the set of singularities of $\psi^S_{j,B}$.  Here $\Gamma(0,\epsilon)$ is a small circle of radius $\epsilon$ around $0$.  Therefore,   the set of singularities of  $\psi^S_{r,j,B}(z,y,z^*)-\frac{1}{2}\frac{\gamma+\ln y}{y}$  in the $y$ variable is included in the set of singularities of $\psi^S_{j,B}(z,y,z^*)$. Hence, a singularity of   the Borel transform $\psi_{r,j,B}(z,y,z^*) $ of $\eta^{-\frac{1}{2}}\exp(r_j(z,\eta,z^*))$  is a singularity of   $\psi_{j,B}(z,y,z^*)$   as well. By definition, since
\[
r_j(z,\eta,z^*) \;=\; \eta \int_{z^*}^z w_j(t)\,dt \;+\; O(1),
\qquad \eta\to+\infty,
\]
the Borel transform $\psi_{r,j,B}(z,y,z^*)$ of 
$\eta^{-\frac12}\exp\!\big(r_j(z,\eta,z^*)\big)$
has a singularity at
\[
y \;=\; -\int_{z^*}^z w_j(t)\,dt .
\]
Since $\psi_{r,j,B}(z,y,z^*)$ has a singularity at
$y=-\int_{z^*}^z w_j(t)\,dt$ and, by the above Cauchy argument, any singularity
of $\psi_{r,j,B}(z,y,z^*)$ is also a singularity of $\psi_{j,B}(z,y,z^*)$,
it follows that the singular set of $\psi_{r,j,B}(z,y,z^*)$ is contained in
the singular set described in Lemma~\ref{Sng}.

\medskip
Using Theorems \ref{Leroy} and \ref{Ara}, we get 
\begin{eqnarray}\label{ExpTyp}
\lim_ {\begin{subarray}{c}
                           y\rightarrow +\infty\\
                           y\in \mathcal{S}
                          \end{subarray}} \psi_{r,j,B}(z,y,z^*) =0,
\end{eqnarray}
where   $\mathcal{S}=[-y_0(z,z^*)\pm \imath\delta, t\Re[-y_0(z,z^*)]+\imath(\Im[-y_0(z,z^*)]\pm\delta) ], t>0, |\delta|<\delta_0 $ and $z\notin \mathcal{S}_{z*,j}\cup \mathcal{N}_{ext}$.        This implies   that $\psi_{r,j,B}(z,y,z^*)$  is also of exponential type when $y\in \mathcal{S}$.

By definition, one has 
\begin{eqnarray}\label{*tok}
\psi_{j,B}(z,y,z_k)= \lim_{z^*\to z_k}  \psi_{r,j,B}(z,y,z^*).
\end{eqnarray}
To prove that the singularities of $\psi_{j,B}(z,y,z_k)$ occur at $\dsty \left\{(z,y):y=-\int_{z_k}^{z}\frac{dt}{\sqrt[M]{\rho_M(t)}}\right\}$  we need  to analyze the  limit in \eqref{*tok}.

  Take    a smooth  path $\mathfrak{c}$ connecting a fixed point  $z^{\prime}$ and $z_k$ as shown in  Figure \ref{Small_V}  and let the reference point $z^*$ vary along the arc $\kappa\subset \mathfrak{c}$ so that $\kappa$ is contained in  small neighborhood  $V_{z_k}$ of  $z_k$, see Figure \ref{Small_V}.
\begin{figure}[h!]
\centering
 \begin{subfigure}{0.45\textwidth}
        \includegraphics[width=0.45\textwidth]{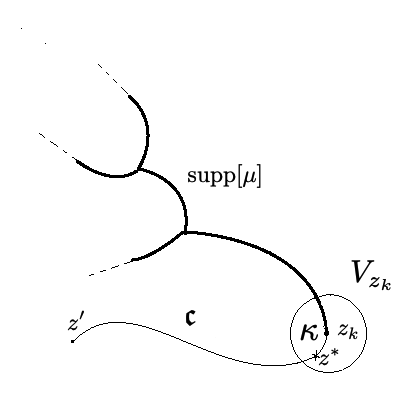}
         \caption{ The neighborhood $V_{z_k}$ and the  integration path $\mathfrak{c}$ for the function $\psi_{r,j,B}$. }\label{Small_V}
 \end{subfigure}
 \hfill
 \begin{subfigure}{0.45\textwidth}
        \includegraphics[width=0.45\textwidth]{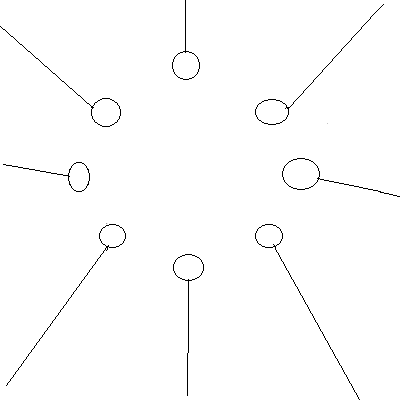}
         \caption{ Region  $ \dsty \Omega_Y$}\label{Small_V_Y}
 \end{subfigure}
 \caption{ Regions $V_{z_k}$ and  $ \dsty \Omega_Y$ in the $z$- and the $y$-spaces respectively. }
\end{figure}

 If $z^*\in  \kappa$,  we can find  small neighborhoods $V_{y_l(z^{\prime},z^*)}, l=1,\ldots, M$ of $\dsty  y_l(z^{\prime},z^*)=- \int_{z^*}^{z^{\prime}}\frac{dt}{\sqrt[M]{\rho_M(t)}}$ such that  for each  $z^*\in  \kappa$,  $\psi_{r,j,B}(z^{\prime},y,z^*)$  is   analytic in the variable $y$  in the region $ \dsty \Omega_Y= \CC\setminus \left[\bigcup\limits_{l=1}^M V_{  y_l(z^{\prime},z^*) }\cup \gamma_Y\right] $, where   $\gamma_Y$ is the  branch cut for $\psi_{r,j,B}$  consisting of  Jordan arcs connecting each  $y_l(z^{\prime},z^*)$ with $\infty$, see Figure \ref{Small_V_Y}. Hence, when   $\rho_M(z)\neq (z-a)^M$, we can write  
 $$\dsty  \psi_{r,j,B}(z^{\prime},y,z^*)=\sum_{n\geq 0}a_n(z^{\prime},z^*)y^n, \quad |y|<\delta(z^{\prime}),$$
where  each term $a_n(z^{\prime},z^*)$  is continuous for $z^*\in  \overline{ \kappa}$ (this follows from the representation \eqref{Dec}, since the $z^*$–dependence
of $r_j(z',\eta,z^*)$ comes only from integrals with lower limit $z^*$ of
locally integrable powers $(\zeta-z_k)^{\alpha}$ with $\alpha>-1$ and of
holomorphic functions, while all singular terms are integrated from $\infty$
and are independent of $z^*$),    which implies that $\psi_{r,j,B}(z^{\prime},y,z^*)$ is also continuous when $(y,z^*)\in \{|y|<\delta_0<\delta(z^{\prime})\}\times \overline{ \kappa}$.  Therefore,   by the Heine–Cantor theorem,  $\psi_{r,j,B}(z^{\prime},y,z^*)$ is uniformly continuous in $\overline{ \kappa}$. Hence, $\psi_{r,j,B}(z^{\prime},y,z^*)$ is a family of functions depending on the variable $z^*$ which   uniformly converges as $z^*$ approaches  $z_k$ along $\kappa$. By \cite[Th. 15.12 p.333 Vol.I]{Mark65},  $\psi_{r,j,B}(z^{\prime},y,z_k)$ is analytic  when $y$ varies in compact subsets of $\Omega_Y$. Since we can consider  arbitrary small neighborhoods $V_{z_k}$ and  $V_{y_l(z^{\prime},z^*)}, l=1,\ldots, M$  then using \eqref{*tok} we obtain that   $\psi_{j,B}(z^{\prime},y,z_k)$  has no  singularities other than $\dsty \left(z^{\prime},-\int_{z_k}^{z^{\prime}}\frac{dt}{\sqrt[M]{\rho_M(t)}}\right)$.  

The statement that  $\psi_{j,B}(z,y,z_k)$ is of exponential type when  $$y\in [-y_0(z,z_k), t\Re[-y_0(z,z_k)]+\imath(\Im[-y_0(z,z_k)]) ], t>0; z\notin \mathcal{S}_{z_k,j}\cup \mathcal{N}$$ is immediate from \eqref{ExpTyp}, \eqref{*tok}, and the definition \eqref{ST*}. Indeed, since in $\Omega$ the $M$ branches of $\sqrt[M]{\rho_M}$ are fixed and
single--valued, the action function
\[
z \longmapsto \int_{z^*}^z \sqrt[M]{\rho_M(t)}\,dt
\]
depends continuously on $z^*\in\kappa$. Consequently, the level sets defining the
Stokes curves $\mathcal{S}_{z^*,j}$ vary continuously and converge, as
$z^*\to z_k$ along $\kappa$, to the Stokes curves $\mathcal{S}_{z_k,j}$, so that
the admissible sectors in the $y$--plane persist in the limit.

\medskip

b)   We analyze the propagation of singularities for the Borel--transformed
operator $\mathcal L_B$ using bicharacteristic strips in the sense of
Definition~\ref{BS}.

From \eqref{Ha4}, we have $\epsilon(t)=\mathrm{const}$ along any
bicharacteristic strip. Without loss of generality, we normalize
$\epsilon=1$. The characteristic equation \eqref{Ha5} then yields
\[
\zeta=\frac{1}{\sqrt[M]{\rho_M(z)}},
\]
where a branch is fixed once and for all.

By item~(a), singularities of $\psi_{j,B}(z,y,z_k)$ occur on the analytic
curves
\[
y=-\int_{z_k}^{z}\frac{dt}{\sqrt[M]{\rho_M(t)}}.
\]
By the analytic propagation of singularities
\cite[Cor.~7.2.2]{DuHo94}, these singularities propagate along
bicharacteristic strips emanating from points of the form
\[
\left(a,-\int_{z_k}^{a}\frac{dt}{\sqrt[M]{\rho_M(t)}},
\frac{1}{\sqrt[M]{\rho_M(a)}},1\right),
\qquad a\neq z_k.
\]

The Hamilton--Jacobi system \eqref{Ha1}--\eqref{Ha5} reduces to
\[
\begin{cases}
\dfrac{dz}{dt}=M\sqrt[M]{\rho_M(z)},\\[0.3em]
\dfrac{dy}{dt}=-M,\\[0.3em]
\dfrac{d\zeta}{dt}=-\dfrac{\rho_M'(z)}{\rho_M(z)},
\end{cases}
\]
with initial conditions
\[
z(0)=a,\quad
y(0)=\displaystyle\int_{z_k}^{a}\frac{dt}{\sqrt[M]{\rho_M(t)}},\quad
\zeta(0)=\frac{1}{\sqrt[M]{\rho_M(a)}}.
\]

The equation for $y$ integrates explicitly as
\[
y(t)=-Mt-\int_{z_k}^{a}\frac{dt}{\sqrt[M]{\rho_M(t)}},
\]
so that $y$ is strictly monotone along the bicharacteristic curve. Since $\frac{dy}{dt}=-M\neq0$, the projection
$t\mapsto(z(t),y(t))$ is injective. Hence the bicharacteristic curve
$\mathcal{BC}(t)$ has no self-intersections. 
Moreover, the equation for $z(t)$ defines an analytic function as long as
$\rho_M(z)\neq 0$. Standard results on analytic ODEs
\cite[\S13.7]{Ince65} imply that $z(t)$ can be continued uniquely until it
reaches a zero of $\rho_M$.

Therefore, along any bicharacteristic curve $\mathcal{BC}(t)$,
no new singularities in the $z$--variable can arise except at points where
$\rho_M(z)=0$, that is, at the turning points $z_k\in\mathcal Z$.
This proves item~(b).

\medskip

c)  Follows immediately from item a). \lqqd
 
 \end{proof}

  Next we prove Theorem \ref{VTPE}.  Our strategy follows  \cite[pp. 5-6]{H15vir} and \cite[pp. 24-25]{kawai2005algebraic}. To understand how the Borel sum $\Psi_{1,1}$ changes when we move from $a$ to $b$ we study the analytic continuation of the Borel transform $\psi_{1,B}(z,y,z_k)$  from $z=a$ to $z=b$ as shown in Figure \ref{FK}.

\begin{figure}[h!]
\centering
        \includegraphics[width=0.4\textwidth]{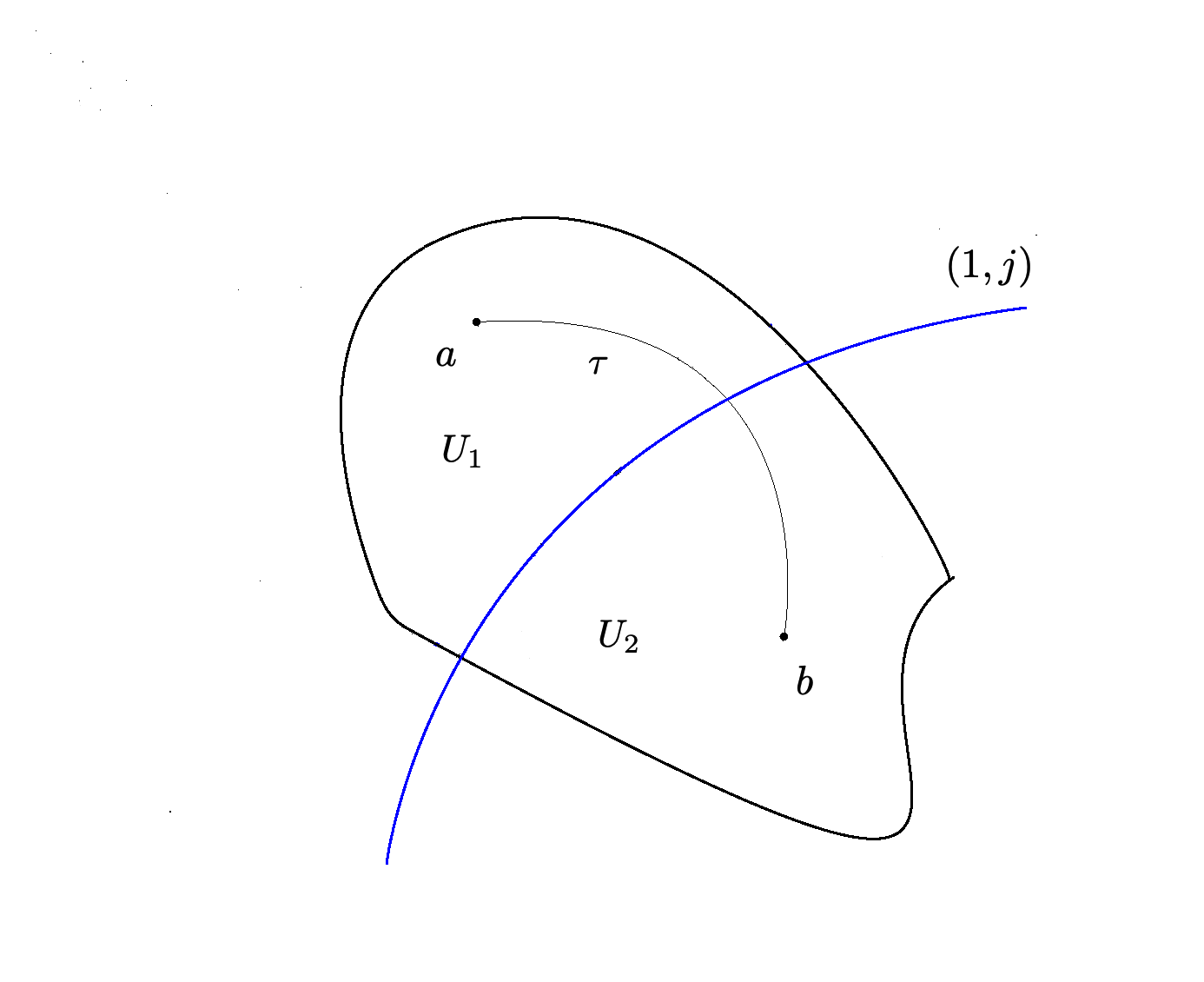}
         \caption{Analytic continuation of the Borel transform $\psi_{1,B}(z,y,z_k)$ along the path $\tau$ from $a\in  U_1$ to $b\in U_2$, crossing the Stokes curve of type $(1,j)$.  }\label{FK}
\end{figure}

While carrying out  the analytic continuation, we  deform  the integration path to $\tilde{\gamma}$, see Figure \ref{F3}. The condition that   $\ell$ does not connect   $z_1$ with another turning point or $z_1$ with itself  implies that we can  apply the  Cauchy theorem giving 
\begin{eqnarray}
\nonumber  \int_{\tilde{\gamma}}e^{-y\eta}\psi_{1,B}(b,y,z_k)dy&=& \int_{\gamma}e^{-y\eta}\psi_{1,B}(b,y,z_k)dy+ \int_{\gamma_0}e^{-y\eta}\psi_{1,B}(b,y,z_k)dy\\
\label{al1}       &=&\Psi_{1,2}(b,\eta,z_k)+ \int_{\gamma_0}e^{-y\eta}\psi_{1,B}(b,y,z_k)dy. 
\end{eqnarray}
Here the path $\gamma_0$ encircles the half-line
$$l_0=\left\{(z,y)\in\CC^2: \Im[y]=\Im\left[-\alpha_j y_0(z,z_k)   \right], \Re[y]>\Re\left[-\alpha_j y_0(z,z_k)   \right]\right\},$$
where $\alpha_j=e^{\frac{\pi (j-1)\imath}{M}}$. Hence, the Borel sum $\Psi_{1,1}$ changes by the factor $\int_{\gamma_0}\psi_{1,B}(b,y,z_k)dy$ when $z$ crosses the Stokes curve from $a$ to $b$. 
We recall that  
\begin{eqnarray}\label{al2}
\Delta_{y=-\alpha_j y_0(z,z_k)   }\psi_{1,B}(z,y,z_k)=l^+_0\psi_{1,B}(z,y,z_k)-l^-_0\psi_{1,B}(z,y,z_k)
\end{eqnarray}
 is the alien derivative of $\psi_{1,B}$, and $l^{\pm}_0\psi_{1,B}$ denotes the analytic continuation of $\psi_{1,B}$ from above $l^+_0$ and below $l^-_0$.

\medskip
By expanding    the second summand  of the last  expression in  \eqref{al1}   as a WKB-solution of \eqref{OPer}  we deduce  that 
\begin{eqnarray}\label{al3}
\Delta_{y=-\alpha_j y_0(z,z_k) }\psi_{1,B}(z,y,z_k)=c_j\psi_{j,B}(z,y,z_k).
\end{eqnarray}
By substituting  \eqref{al2} and \eqref{al3} in \eqref{al1} we get,
\begin{eqnarray}
\begin{aligned}\label{psi1j} 
\Psi_{1,1}(z,\eta,z_k)&\mapsto  \Psi_{1,2}(z,\eta,z_k)+c_j\Psi_{j,2}(z,\eta,z_k),\\
  \Psi_{j,1}(z,\eta,z_k)& \mapsto  \Psi_{j,2}(z,\eta,z_k),
\end{aligned}
\end{eqnarray}
when we cross  the Stokes curve.

\begin{figure}[!ht]
  \begin{subfigure}{0.45\textwidth}
    \includegraphics[width=1\textwidth]{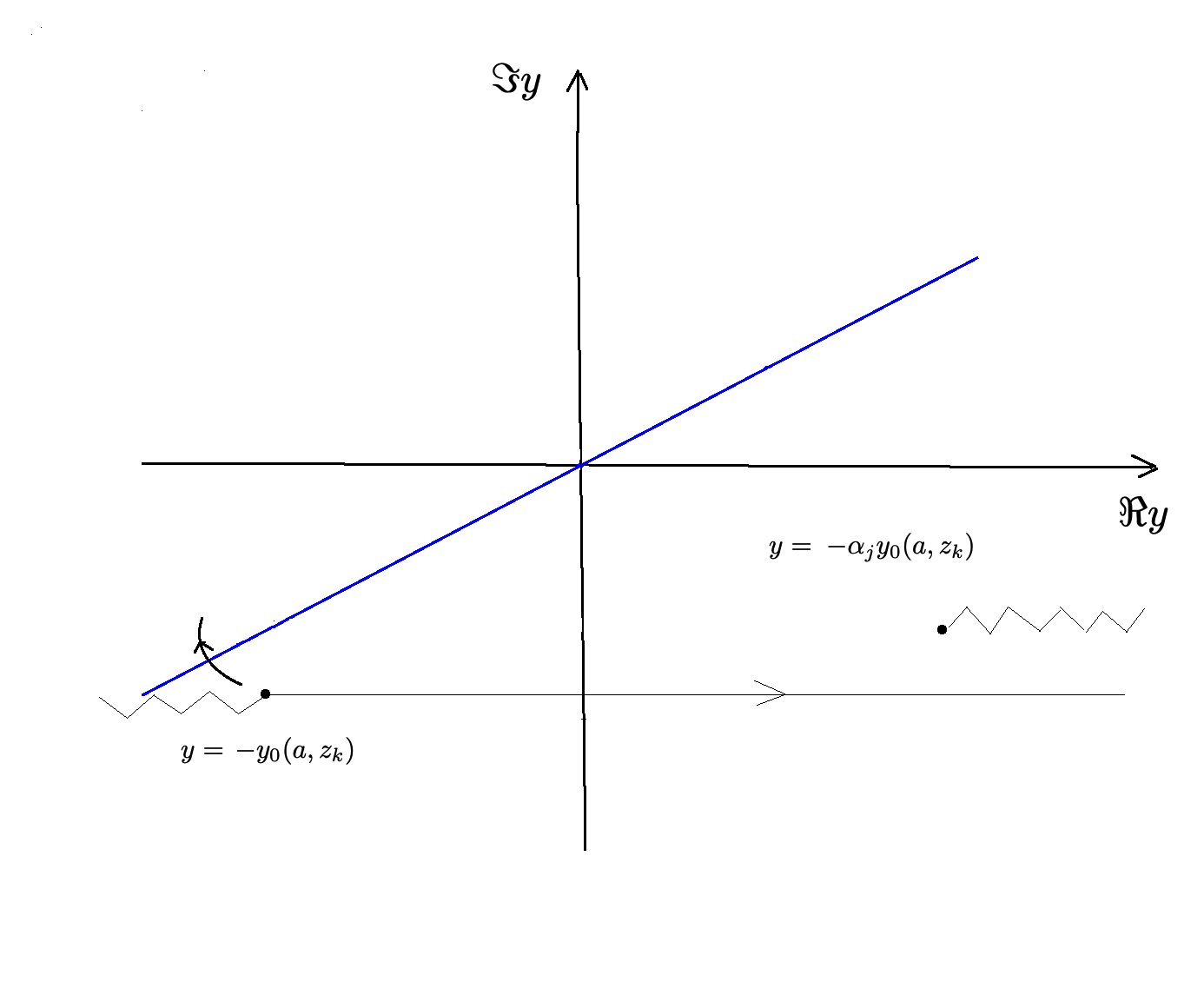}
    \caption{The integration path for the Borel sum of $\psi_1$ at $z=a$.}\label{F1}
  \end{subfigure}
     \begin{subfigure}{0.45\textwidth}
    \includegraphics[width=1\textwidth,right]{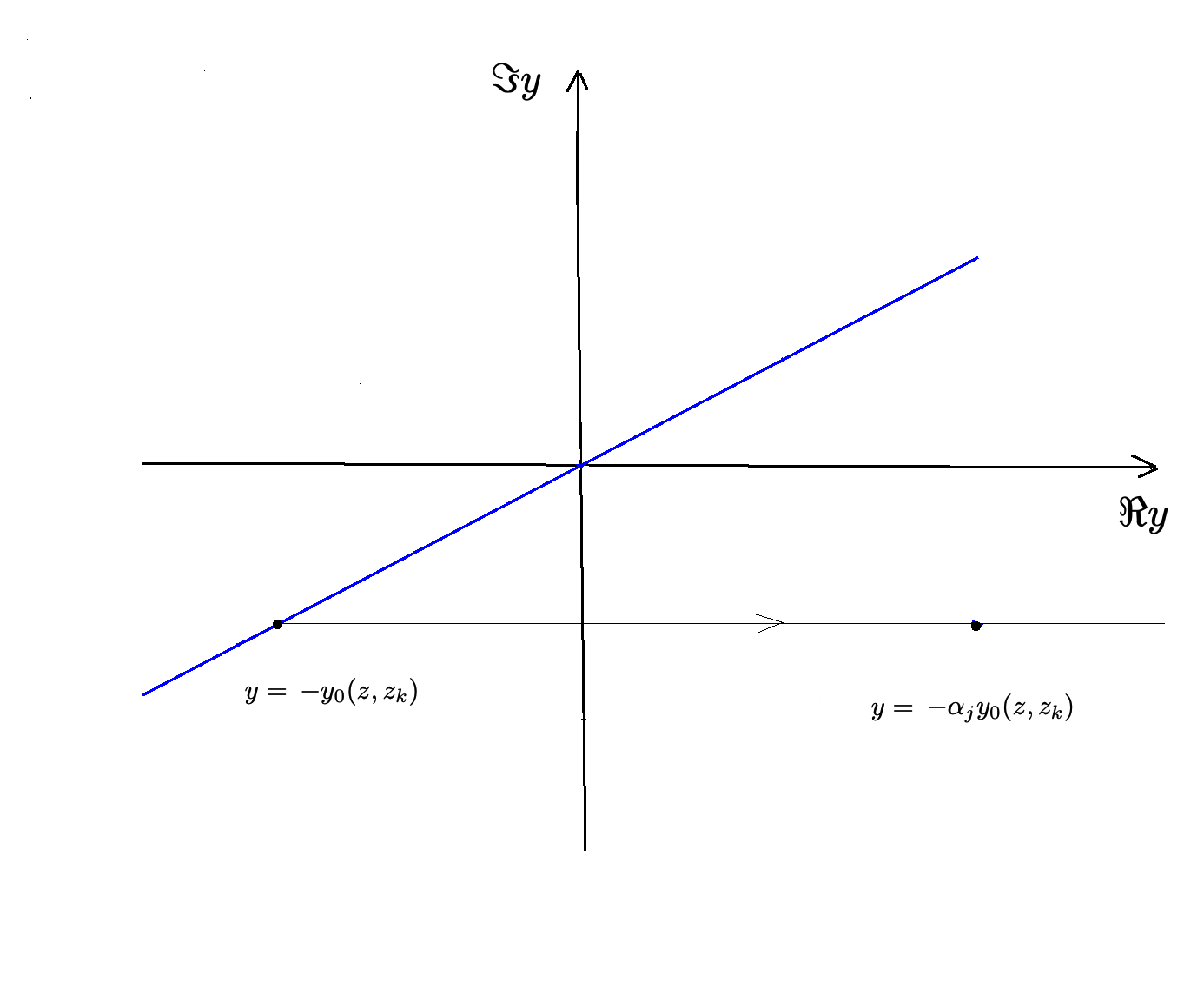}
    \caption{Coincidence of the integration paths when $z$ belongs to  the Stokes curve. }\label{F2}
  \end{subfigure} 
  \\
  \begin{subfigure}{0.45\textwidth}
    \includegraphics[width=1\textwidth,left]{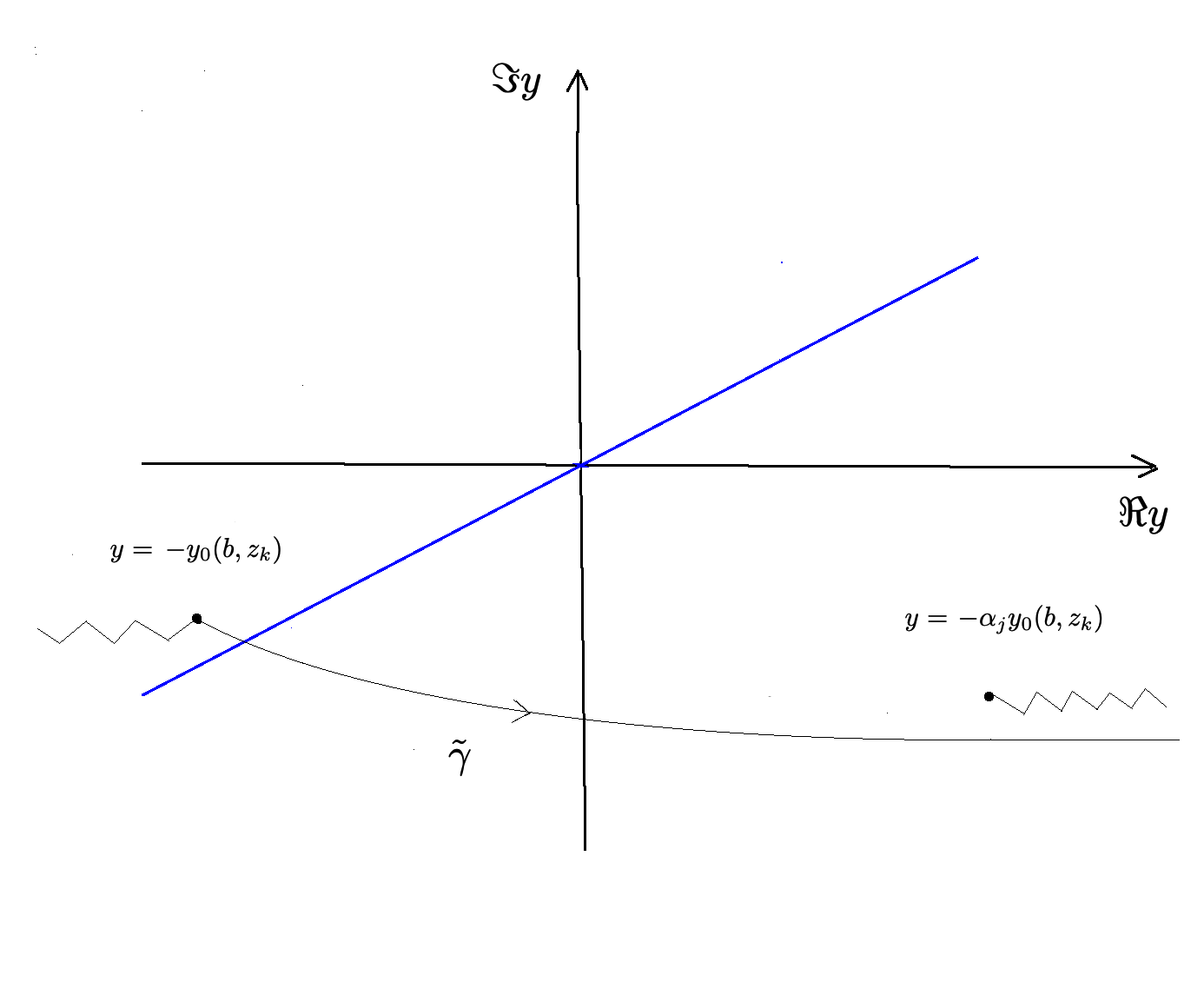}
    \caption{Deformation of the path to continue $\psi_{1,B}$ analytically.}\label{F3}
  \end{subfigure}  
     \begin{subfigure}{0.45\textwidth}
    \includegraphics[width=1\textwidth,right]{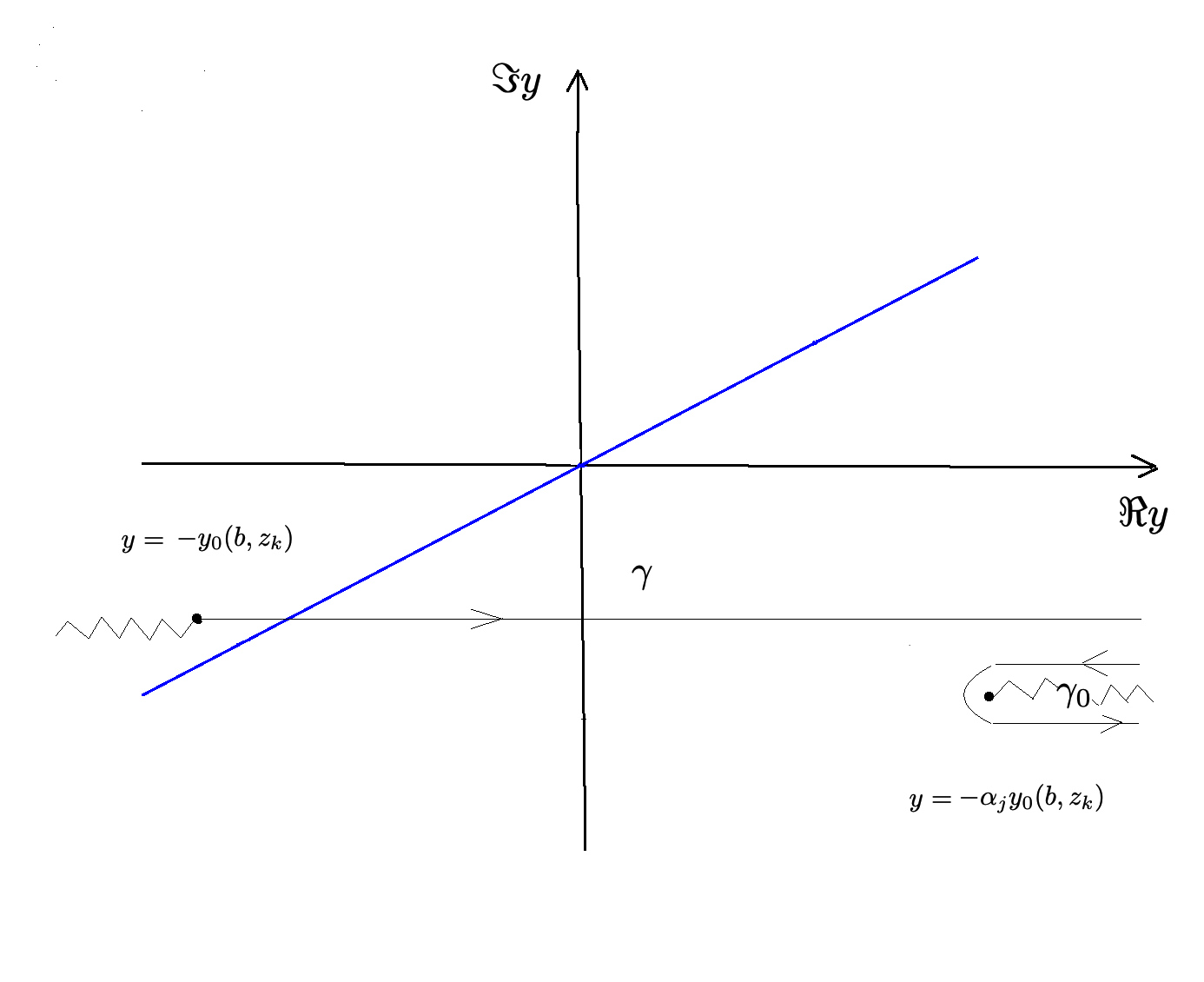}
    \caption{Decomposition of the path $\tilde{\gamma}$ to obtain the analytic continuation of $\psi_{1,B}$.}
  \end{subfigure} 
  \caption{Integration paths for the Borel sums (wiggly lines denote the branch cuts for the Borel transforms). } 
\end{figure}
%

\medskip
Using the same reasoning, for  $(1<j)$ and when  $z$ crosses  from one region to the other along the curve $\tau$, we obtain the second connection formula whcih completes the proof. 
 \lqqd

\medskip
Finally, let us  settle Theorem \ref{Glob}. 

 \begin{proof} a)   By \cite[Th.3]{BerRull02},  each of the Jordan arcs  $\mathfrak{r}_i$ forming $\supp {\mu^\MM}=\mathfrak{r}_1\cup \mathfrak{r}_2\cup \mathfrak{r}_3$ is sent to  straight segments by the mapping $\Psi(z)=\int^zw_1(t)dt$. A direct calculation shows  that  the boundary of the $\mathfrak{B}$-region   is  a piecewise linear  curve  $\dsty  \bigcup_{k=1}^{6}I_k, I_k=[p_k,p_{k+1}],$ 
where 
$$
\dsty \begin{cases}
0\equiv p_1= \lim_{z\rightarrow z_1,\; z\in V^+} \Psi(z),\; p_7= \lim_{z\rightarrow z_1,\; z\in V^-}\Psi(z),\\
p_{2i-1}=\Psi(z_i), i=2,3\\ 
 p_{2i}=\lim_{z\rightarrow v,\;z\in V_i} \Psi(z),\; i=1,2,3.
\end{cases}
$$
The curves $V_i, i=1,2,3$ are shown in Figure \ref{fi00}.
  Notice that 
\begin{eqnarray}\label{impart}
      p_1=p_7+2\pi \imath.                 
\end{eqnarray}                         
 
  \begin{figure}[h!]
\centering
        \includegraphics[width=0.3\textwidth]{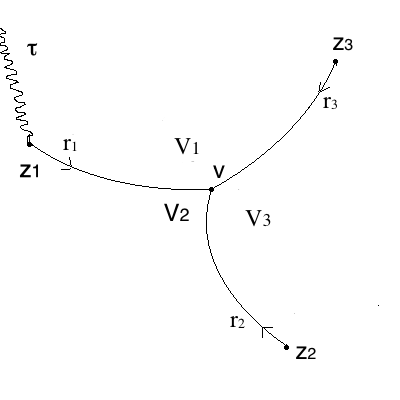}
         \caption{The region $\supp {\mu^\MM}\setminus \mathfrak{r}$. The wiggly line denotes the branch cut defined by $\mathfrak{r}$ }\label{fi00}
\end{figure}

By  Lemma \ref{angles},   the  angles at the point $v$ between the arcs $\mathfrak{r}_i, i=1,2,3$ of   $\supp {\mu^\MM}$ are  $\dsty \frac{2\pi}{3}$. Hence, the interior angles between the line segment $I_k, k=1,\ldots,6$ are given by 
\begin{eqnarray}\label{angleijT}
\dsty \left(\frac{2\pi}{3},\frac{4\pi}{3},\frac{2\pi}{3},\frac{4\pi}{3},\frac{2\pi}{3}\right).
\end{eqnarray}
Therefore the lines segments  $I_k; k=1,3,5$ are parallel to each other as well as the  line segments  $I_k; k, k=2,4,6$.
 
\medskip
By Definition \ref{Tp}, we have that the Stokes curves of type $(j,j^{\prime})$ that emanate from $z_k$ are given by 
\begin{eqnarray}\label{ST1}
\{z\in \Omega: \Im\left[\int_{z_k}^z(w_{j}(\zeta)-w_{j^{\prime}}(\zeta))d\zeta\right]=0\}.
\end{eqnarray}
Hence,  from items a) and b) of Lemma \ref{Complete}  if $ \kappa=\{z:\Im[z]=0\}$   we have that 
\begin{eqnarray}\label{ST2}
\{z\in \Omega: \Im\left[\int_{z_k}^z(w_{j}(\zeta)-w_{j^{\prime}}(\zeta))d\zeta\right]=0\}=\rho\left( \mathcal{F}^{-1}_{(j,j^{\prime})}( \kappa\cap \mathfrak{B}_{(j,j^{\prime})}) \right).
\end{eqnarray}

 In  Figure \ref{fi1}  using   relations \eqref{ST1}, \eqref{ST2}, and item c) of Lemma \ref{homeo}, we show  the Stokes curves emanating from $z_1$ in each of the $\mathfrak{B}_{(j,j^{\prime})}$-regions. While Figure~\ref{fi2} shows the projection of the Stokes curves emanating from $z_1$ onto the $\mathfrak{B}$-region

  \begin{figure}[!ht]
\centering
        \includegraphics[width=0.7\textwidth]{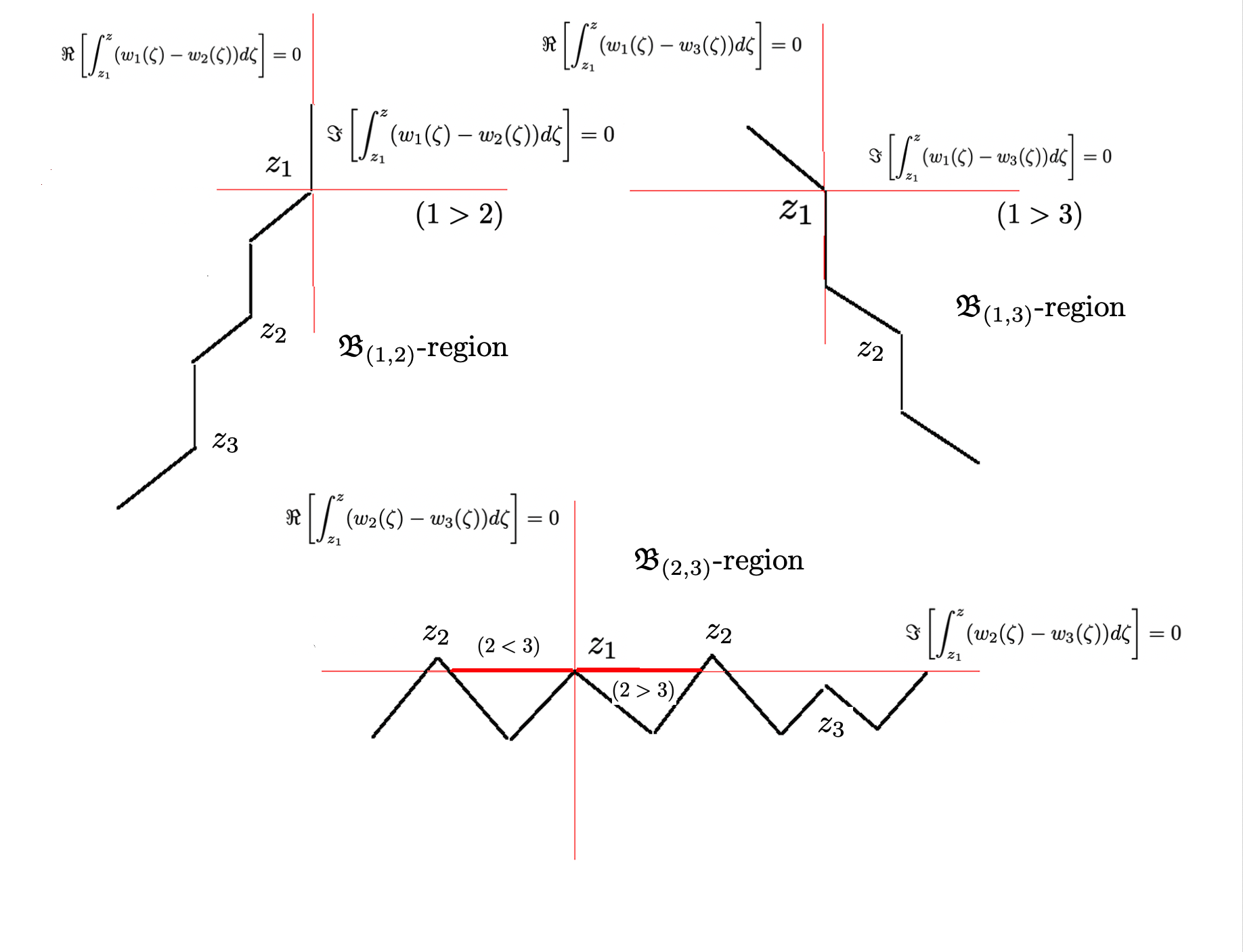}
         \caption{The   Stokes curves emanating from $z_1$ in the $\mathfrak{B}_{(j,j^{\prime})}$-regions. (Notice that  $\mathfrak{B}_{(1,2)}=e^{-\frac{\imath\pi}{6}}\mathfrak{B}, \mathfrak{B}_{(1,3)}=e^{\frac{\imath\pi}{6}}\mathfrak{B}, \mathfrak{B}_{(2,3)}=e^{\frac{\imath\pi}{2}}\mathfrak{B}  $). }\label{fi1}
\end{figure}

\begin{figure}[!ht]
     \centering
    \includegraphics[width=0.5\textwidth]{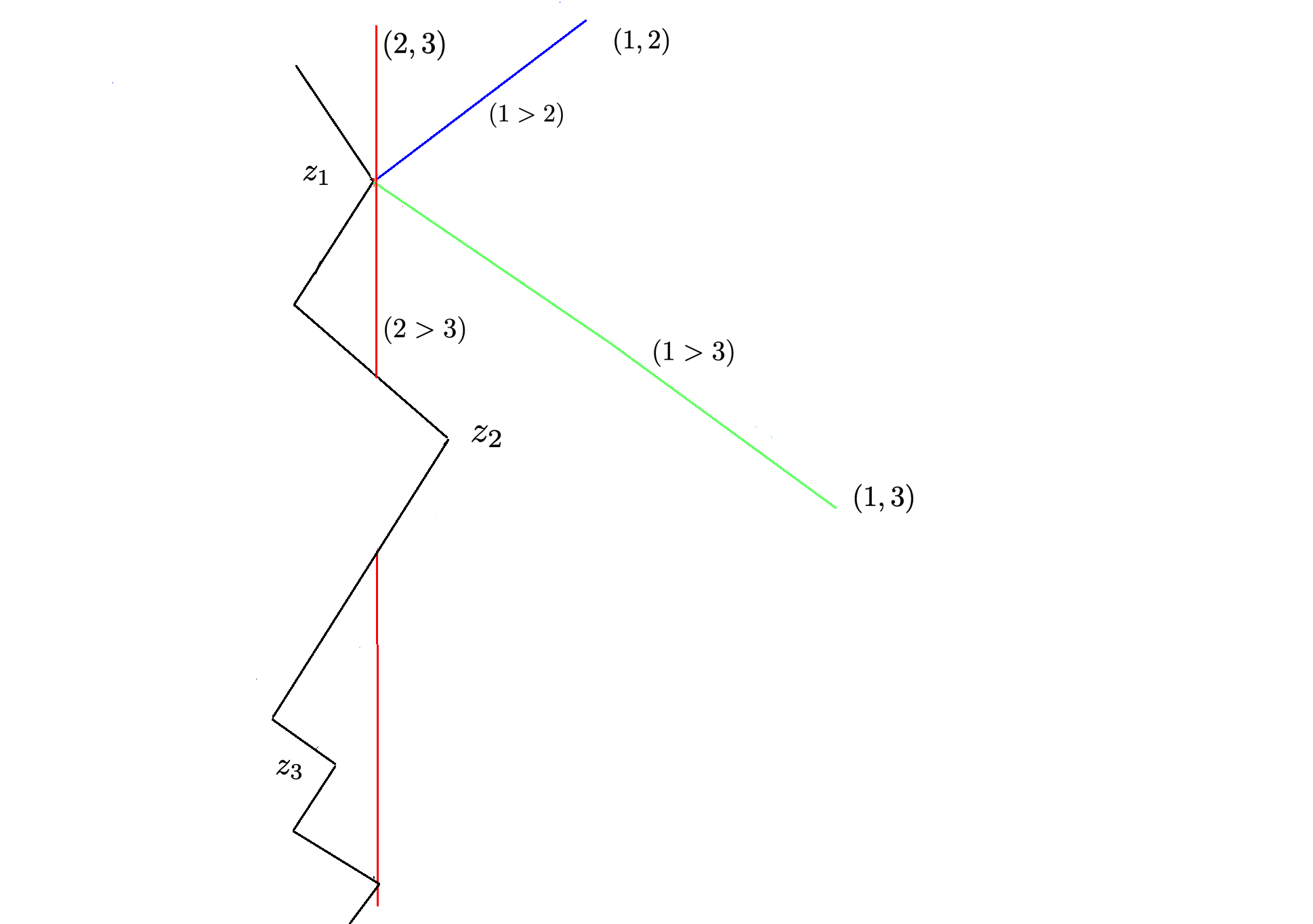}
    \caption{Projection   of the Stokes curves emanating from $z_1$ in the $\mathfrak{B}$-region.  (It might happens that for some special polynomials $\rho_3$, that the red curve passes through the points $z_2$ and $z_3$).} \label{fi2}
\end{figure}

\begin{figure}[!ht]
  \begin{subfigure}{0.5\textwidth}
     \centering
    \includegraphics[width=0.8\textwidth]{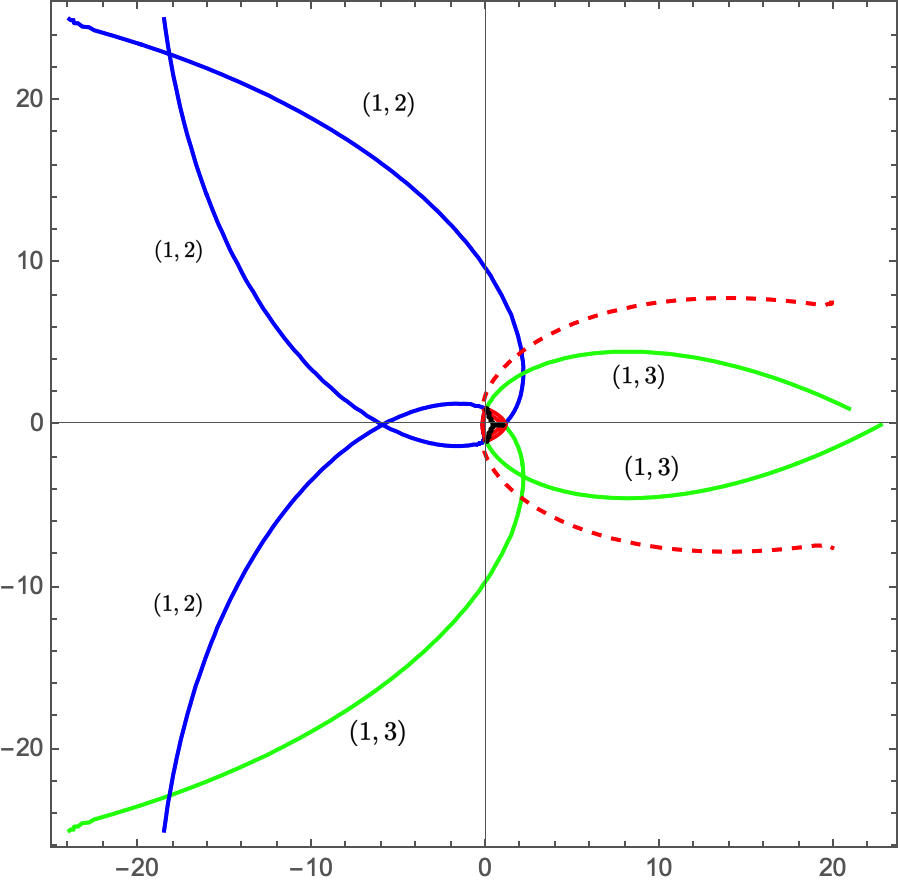}
    \caption{  Stokes curves  of type $(1,2)$ and $(1,3)$.}
  \end{subfigure}
   \begin{subfigure}{0.35\textwidth}
    \centering
    \includegraphics[width=1.1\linewidth]{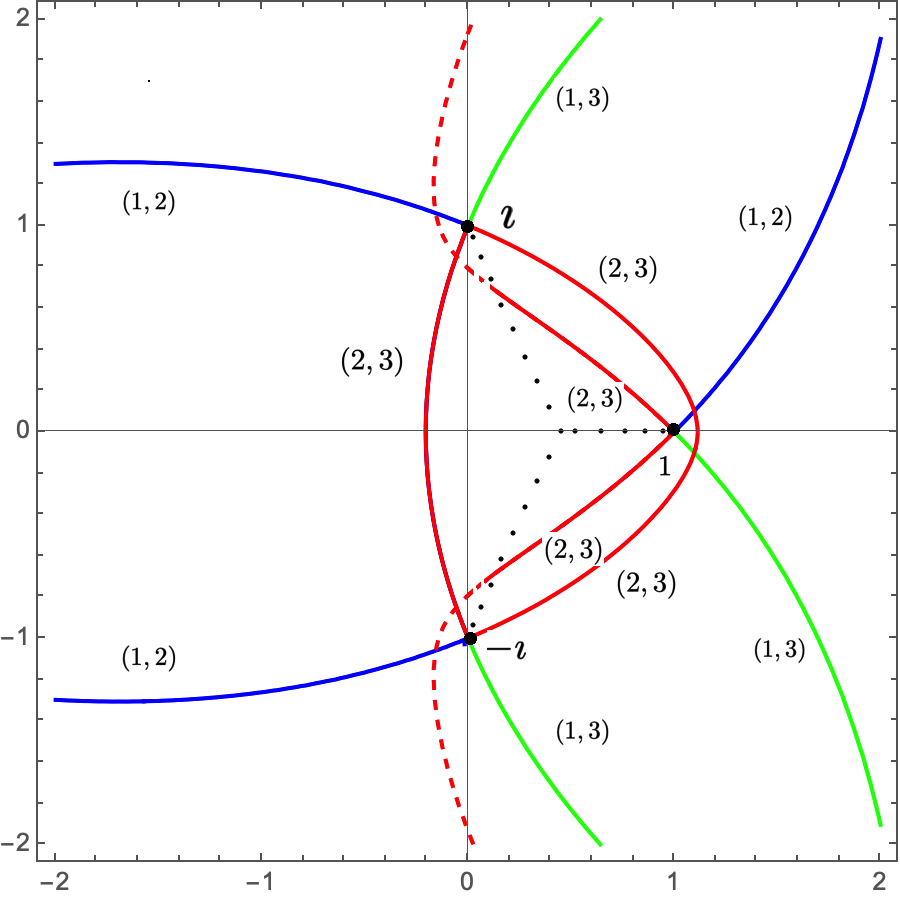}
    \caption{ Stokes curves of type $(2,3)$ in a region closer to $\supp\mu$ (punctured lines in black).  }
  \end{subfigure}  
\caption{Stokes curves   for the operator $\dsty \LL[y]=(z^2+1)(z-1)y^{\prime\prime\prime}- \eta^{3}y$ in the $\Omega$-region. The dashed curves in red indicate the continuation  of the curves of type $(2,3)$ emanating from $z_1=1$ when crossing  the branch cut defined by $\supp\mu$. Also, the Stokes curves  $(2,3)$ emanating from $\pm\imath$ are closed curves and are  piled up.}  \label{f_example}
\end{figure}

A similar argument applies to the remaining roots $z_2$ and $z_3$ and we obtain  the Stokes complex (i.e. configuration of all the Stokes curves) shown in Figure  \ref{fi3}.

\begin{figure}[h!]
\centering
        \includegraphics[width=0.5\textwidth]{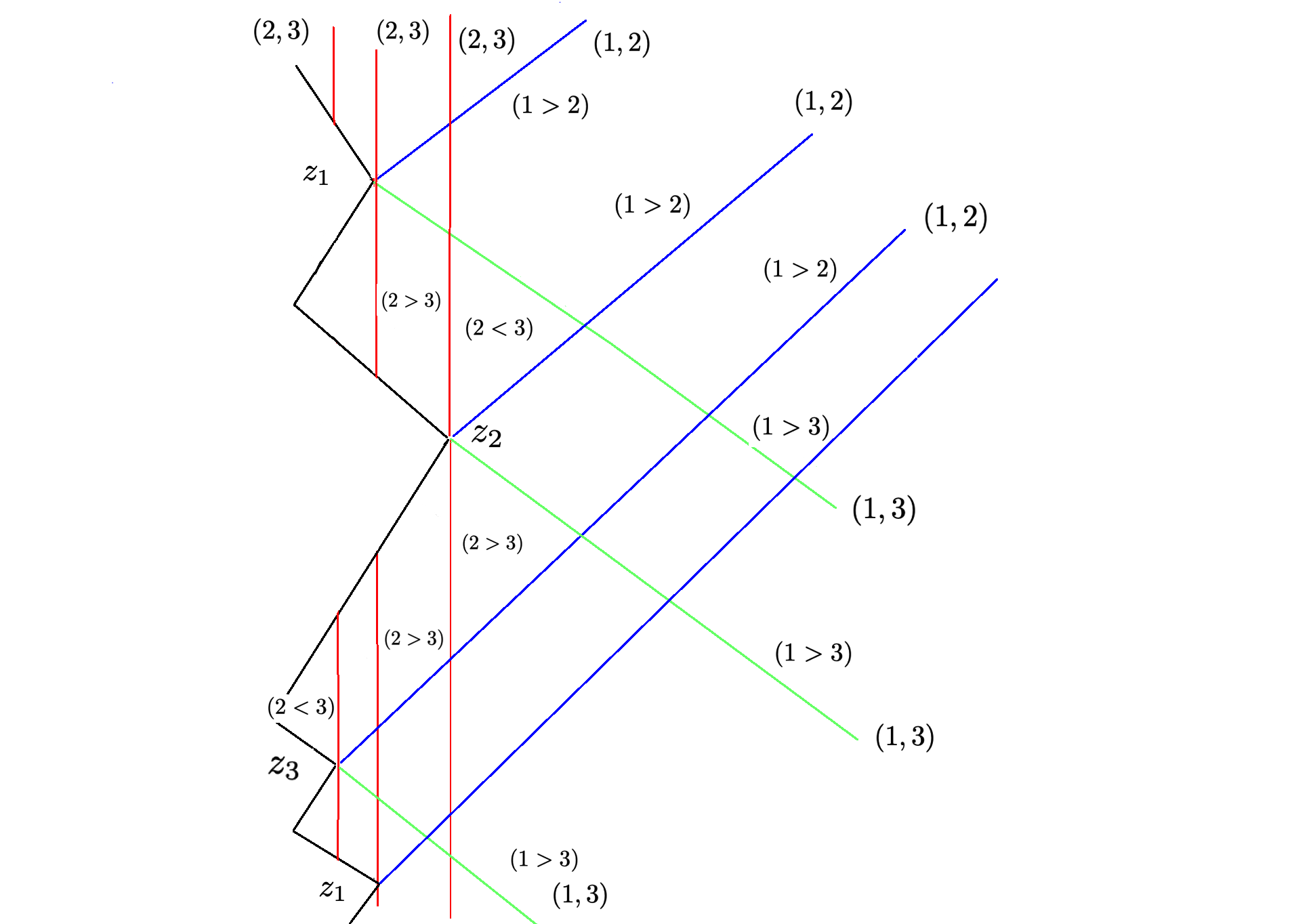}
         \caption{ A typical situation of Stokes curves for $M=3$ in the $\mathfrak{B}$-region.   }\label{fi3}
\end{figure}

The set of all  the Stokes curves emanating from $z_1,z_2$ and $z_3$ in the $\Omega$-region forms the configuration presented in Figure \ref{f_example} for the operator $\dsty \LL[y]=(z^2+1)(z-1)y^{\prime\prime\prime}- \eta^{3}y$.

b) To identify the new Stokes curves we analyze  ordered crossing, which  happen when  $(1,2)$ and $(2,3)$  intersect. Our analysis is performed in the $\mathfrak{B}$-region, the intersection points are shown in Figure \ref{fi4}.

\begin{figure}[H]
  \begin{subfigure}{0.5\textwidth}
    \includegraphics[width=1\textwidth]{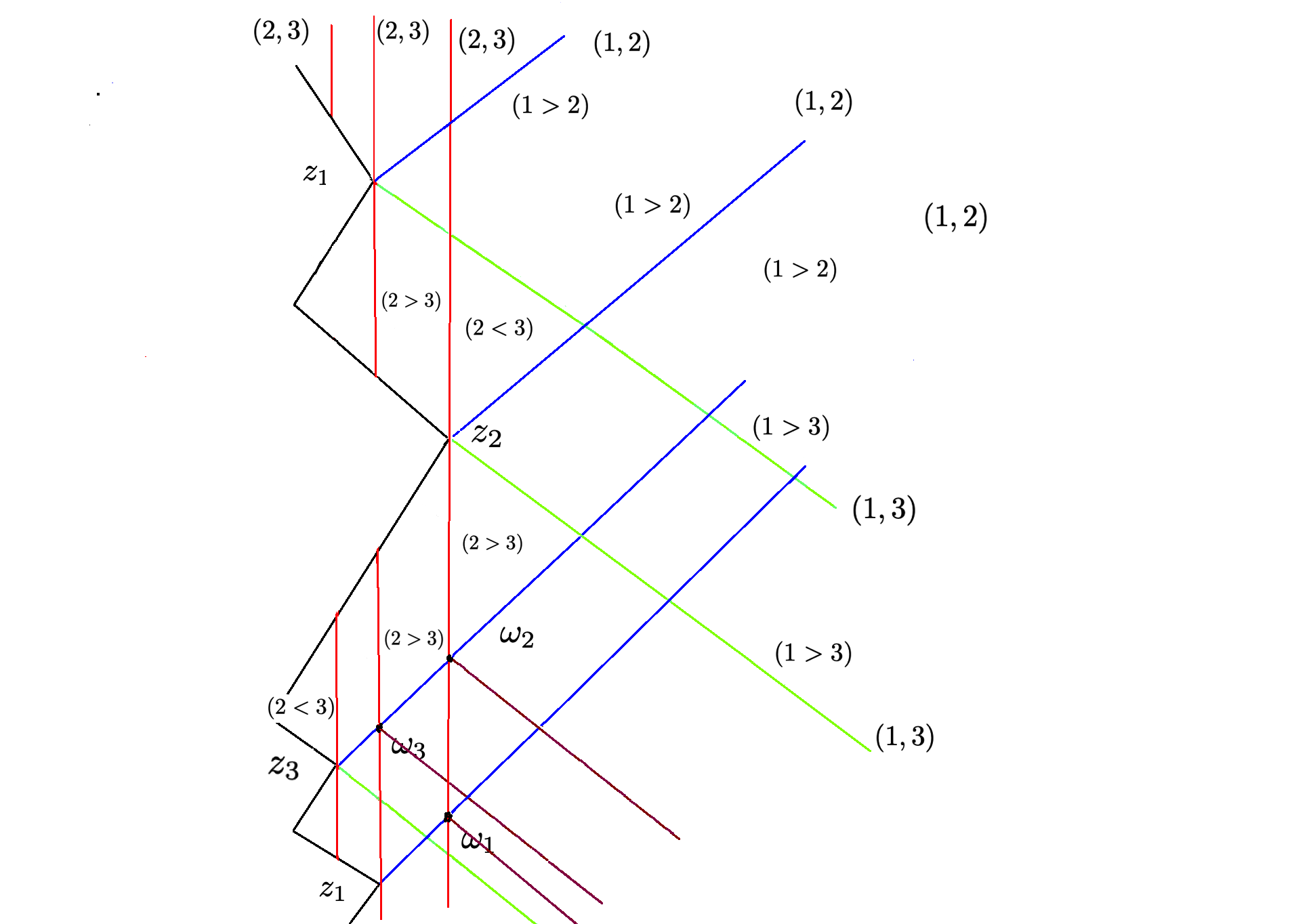}
    \caption{Ordered crossings and the new Stokes curves (in brown). }\label{fi4}
  \end{subfigure}
  \hfill
   \begin{subfigure}{0.5\textwidth}
    \includegraphics[width=1\textwidth,left]{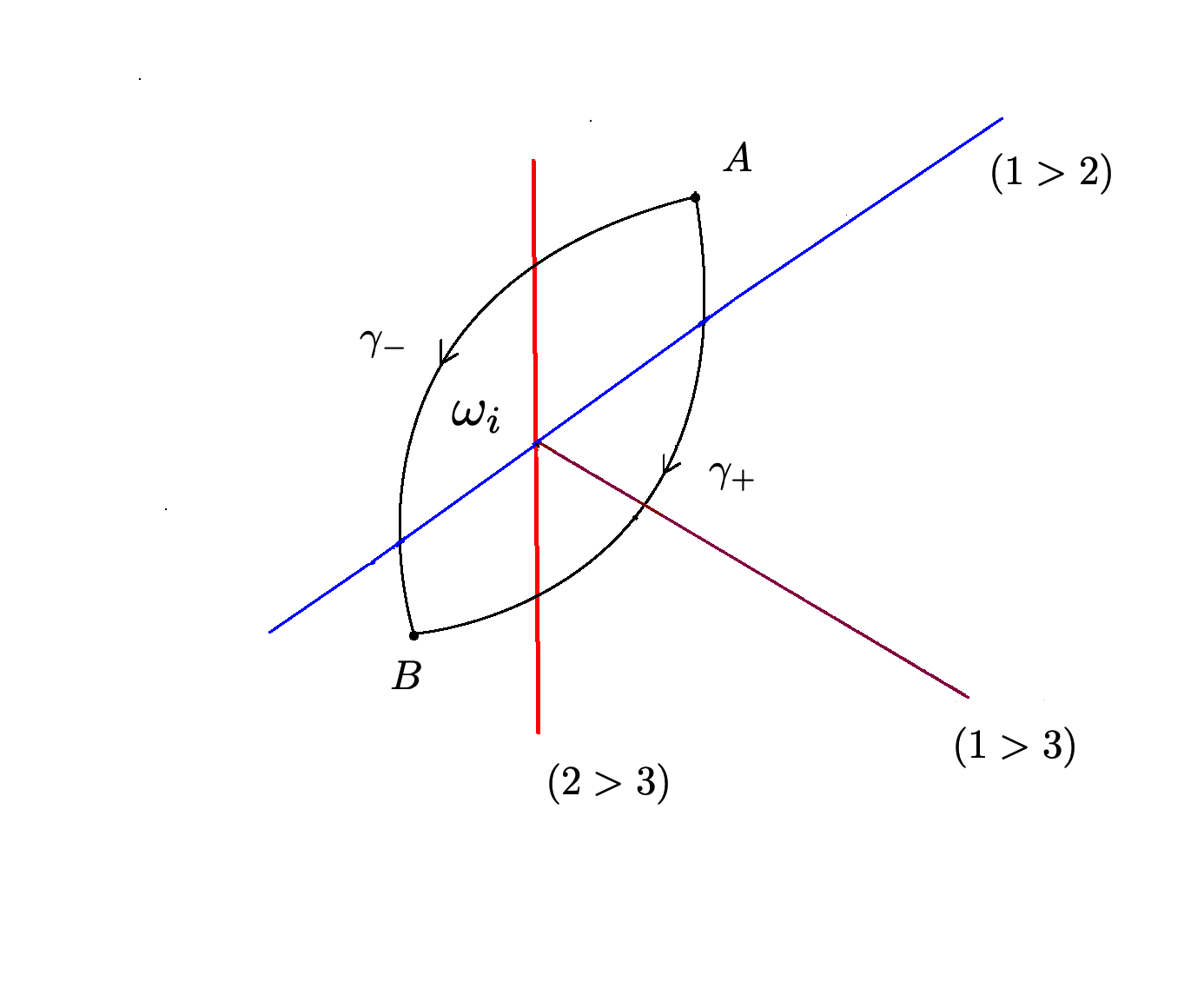}
    \caption{New Stokes curve $(1>3)$ in a neighborhood of $\omega_i$. }\label{fi5}
  \end{subfigure}  
\caption{The original Stokes curves and the new Stokes curves. }  
\end{figure}
%
%
%
 \lqqd
\end{proof}

Corollary \ref{CoTh3} follows from  the relation $\mathfrak{B}_{(1,2)}=e^{-\frac{\imath\pi}{6}}\mathfrak{B}$, see   Figure  \ref{fi1}.

\section{Local structure  of a  third order exactly solvable differential operator near  a turning point }\label{CoordChang}

Factorization of some higher order   differential operators of the WKB-type near  a simple ordinary turning point into lower order differential operators of the same class has been considered in   \cite{AokiKawTak94,AKT95,AKKT04}.  For some  higher order  linear ordinary differential operators, the same   has been done in  \cite{KKT10,ka11} near a pole-type turning point. In this section we provide an example of an exactly solvable  $\LL$ which can not be factorized near of a turning point into operators of lower order with analytic coefficients. Therefore  in general, the study of the WKB-solutions of  exactly solvable operators can not be reduced to the study of lower order differential operators.  More precisely, consider

\begin{eqnarray}\label{L0}
\LL= (z-a)p(z)\frac{d^3}{dz^3}+ p(z)\frac{d^2}{dz^2}+q(z)\frac{d}{dz}-\eta^3,
\end{eqnarray}
where $p$ and $q$ are polynomials of degree $2$ and $1$ respectively such that $p(a)\neq 0$ and $\eta>0$  is a large real number. 

\medskip
  Recall the following notion, see \cite[Def. 3.3]{AKKT04}. 
\begin{definition}
Let $P_0(z,\zeta )$ be  the principal symbol of a differential operator $P$ of the WKB-type on an open set $U\subset \CC_z$ and let $z_*\in U$ be an ordinary  turning point with characteristic value $\zeta_*$. In other words,  the system of equations
$$P_0(z,\zeta)=\partial_{\zeta} P_0(z,\zeta)=0$$
has a solution  $(z_*,\zeta_*)\in U\times \CC_{\zeta}$  and $P_0(z_*,\zeta)$    does not vanish identically as a function of $\zeta$.  The smallest positive integer $m$ such that $\partial^m_{\zeta} P_0(z,\zeta)$ does not vanish is called the \textcolor{blue}{rank of the turning point $z_*$} with the characteristic value $\zeta_*$.
\end{definition}

 Using the  transformation $\dsty z-a=(x-a)^2$ we get   
$$(z-a)^{-1/2}\LL=\LL^*,$$
  where
\begin{eqnarray}\label{L*}
 \LL^*=p((x-a)^2+a)\frac{d^3}{dx^3}-\frac{1}{2} q((x-a)^2+a)\frac{d}{dx}-(x-a)\eta^3.
\end{eqnarray}
Notice that $\eta^{-3}\LL^*$ is of the WKB-type and that   $x=a$ is an ordinary  turning point of rank $3$ with characteristic value $\zeta_*=0$.

Let $V_a$ be  a  neighborhood of $z=a$ and take the cut in $V_a$ by using the arc of $\supp {\mu^{\MM}}$ whose endpoint is $a$. Pick a branch of $\sqrt{z-a}$ in $V_a$ and set  $U^{+}_a=\mathcal{T}^{+}(V_a), U^{-}_a=\mathcal{T}^{-}(V_a)$, and $U_a=U^{-}_a\bigcup U^{+}_a$, where $\mathcal{T}^{\pm}(z)=a\pm \sqrt{z-a}$. 

 
%
%
%
%
%
%
%
%

\begin{proposition}\label{SecRed}
For every  sufficiently small neighborhood of $z=a$, there are no differential operators $Q$ and $R$ of the WKB-type such that 
$$\eta^{-3}\LL=QR.$$
Here $Q=\sum_{j\geq 0}\eta^{-j}Q_j(x,\eta^{-1}\frac{d}{dx})$ and $R=\sum_{j\geq 0}\eta^{-j}R_j(x,\eta^{-1}\frac{d}{dx})$ are differential operators of order $1$ and order $2$ in $\frac{d}{dx}$ respectively, such that 
\begin{eqnarray*}
Q_0(a,0)&\neq& 0,\\
R_0(x,\zeta)&=&(\zeta-\zeta_j(x))(\zeta-\zeta_k(x)),
\end{eqnarray*}
 where $Q_0(x,\zeta) $ (resp.  $R_0(x,\zeta))$ denotes the principal symbol of the operator $Q$, (resp. $R$), i.e. $Q_0(x,\frac{\varsigma}{\eta})$ (resp. $R_0(x,\frac{\varsigma}{\eta})$  ) with $\frac{\zeta}{\eta}$ denoted by $\zeta$.
\end{proposition}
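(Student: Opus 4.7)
The plan is to derive the contradiction purely at the level of principal symbols, without ever touching the lower-order terms $Q_{j}$, $R_{j}$ with $j\geq 1$. From \eqref{L*}, the principal symbol of $\eta^{-3}\LL^{*}$ (the coefficient of $\eta^{0}$ when $\eta^{-1}\tfrac{d}{dx}$ is replaced by $\zeta$) is
\begin{equation*}
\sigma_{0}(\eta^{-3}\LL^{*})(x,\zeta)=p((x-a)^{2}+a)\,\zeta^{3}-(x-a),
\end{equation*}
which confirms that $(x,\zeta)=(a,0)$ is an ordinary turning point of rank $3$: all three characteristic branches $\zeta_{1}(x),\zeta_{2}(x),\zeta_{3}(x)$ collide at $\zeta=0$ when $x=a$. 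The strategy is to show that this triple collision is an obstruction to any holomorphic splitting into a factor of degree $1$ and a factor of degree $2$.

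Assuming a factorization $\eta^{-3}\LL^{*}=QR$ as in the statement, multiplicativity of principal symbols for WKB-type operators gives $Q_{0}\,R_{0}=\sigma_{0}(\eta^{-3}\LL^{*})$. The prescribed form $R_{0}=(\zeta-\zeta_{j})(\zeta-\zeta_{k})$ is monic in $\zeta$, so I write
\begin{equation*}
R_{0}(x,\zeta)=\zeta^{2}+b(x)\zeta+c(x),\qquad Q_{0}(x,\zeta)=\alpha(x)+\beta(x)\zeta,
\end{equation*}
with $\alpha,\beta,b,c$ holomorphic in a neighborhood of $a$, since $Q$ and $R$ are genuine differential operators in $\tfrac{d}{dx}$ with holomorphic coefficients. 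Matching the coefficients of $\zeta^{3},\zeta^{2},\zeta^{1},\zeta^{0}$ in the identity $Q_{0}R_{0}=p((x-a)^{2}+a)\zeta^{3}-(x-a)$ yields
\begin{equation*}
\beta=p((x-a)^{2}+a),\qquad \alpha+\beta b=0,\qquad \alpha b+\beta c=0,\qquad \alpha c=-(x-a).
\end{equation*}

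The middle two equations force $b=-\alpha/\beta$ and $c=\alpha^{2}/\beta^{2}$, and substituting into the last one collapses the system to the single relation
\begin{equation*}
\alpha(x)^{3}=-(x-a)\,\beta(x)^{2}.
\end{equation*}
Evaluating at $x=a$ and using $\beta(a)=p(a)\neq 0$ forces $\alpha(a)^{3}=0$, hence $\alpha(a)=Q_{0}(a,0)=0$, contradicting the assumption $Q_{0}(a,0)\neq 0$. This will complete the proof. The conceptual content behind the calculation, and the only place where something nontrivial is being used, is that isolating one characteristic branch from the other two at a rank-$3$ turning point would amount to extracting a holomorphic cube root of $(x-a)$, which is impossible. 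I do not expect any serious obstacle: the obstruction is already visible at leading order, so no inductive analysis of the subprincipal symbols is required, and the hypothesis $p(a)\neq 0$ from the setup of \eqref{L0} is exactly what makes the vanishing order on the right-hand side of $\alpha^{3}=-(x-a)\beta^{2}$ equal to $1$ rather than a multiple of $3$.
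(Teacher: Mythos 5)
Your proof is correct, and it takes a genuinely different and more elementary route than the paper's. The paper invokes Theorem~5.2 of Aoki--Kamimoto--Koike--Takei (\cite{AKKT04}) for the transformed operator $\LL^{*}$: since $x=a$ is an ordinary turning point of rank~$3$, that theorem produces a \emph{unique} factorization into a factor of order $0$ and a factor of order $3$, and the impossibility of a $1\times 2$ split with $Q_{0}(a,0)\neq 0$ is read off from that uniqueness. Your argument dispenses with the citation entirely: you exploit multiplicativity of principal symbols (valid for WKB/microdifferential operators of order $0$, since the higher terms of the star product carry extra powers of $\eta^{-1}$) to reduce the whole problem to factoring $p((x-a)^{2}+a)\zeta^{3}-(x-a)$ over the ring of germs of holomorphic functions at $x=a$, and the resulting relation $\alpha^{3}=-(x-a)\beta^{2}$ with $\beta(a)=p(a)\neq 0$ forces $\alpha(a)=0$ --- contradicting $Q_{0}(a,0)\neq 0$. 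The conceptual summary you give (a holomorphic $1\times 2$ split would be a holomorphic cube root of $x-a$, i.e.\ isolating one characteristic branch at a rank-$3$ turning point) is exactly right and is in fact more informative than the paper's bare appeal to uniqueness. One small presentational gap: like the paper itself, you carry out the computation for $\eta^{-3}\LL^{*}$ rather than for the $x$-variable form of $\eta^{-3}\LL$ that appears in the statement. The paper dispatches the translation with the one-line remark ``by considering $(z-a)^{-1/2}\LL=\LL^{*}$''; you should add the analogous sentence, or alternatively run your symbol computation directly on $(x-a)\bigl[p((x-a)^{2}+a)\zeta^{3}-(x-a)\bigr]$, where the contradiction appears even faster because the requirement that $R_{0}$ be monic and $Q_{0}(a,0)\neq 0$ already clashes with the vanishing of the full symbol at $x=a$.
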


\proof
Let  $\LL^*$ be defined as in \eqref{L*}. A  straightforward calculation shows  that   $x=a$ is an ordinary  turning point of rank $3$  for $\eta^{-3}\LL^*$. By   \cite[Th. 5.2]{AKKT04}, we have that for $x\in U_a$, 
$$\eta^{-3}\LL^*=QR,$$
where $Q=\sum_{j\geq 0}\eta^{-j}Q_j(x,\eta^{-1}\frac{d}{dx})$ and $R=\sum_{j\geq 0}\eta^{-j}R_j(x,\eta^{-1}\frac{d}{dx})$ are the unique  WKB-type differential operators in $\frac{d}{dx}$ of order $0$ and order $3$  respectively. Finally, by considering  $(z-a)^{-1/2}\LL=\LL^*$, we obtain the required result.   \lqqd

\section{Some differential equations with convergent WKB-solutions (in the usual sense).}\label{SpOp}

The goal of this section is the proof of Theorems \ref{Th1} and \Ref{EC} which show the existence of  differential equations admitting  convergent WKB-solutions in usual sense.  

Let $P_n^{(\lambda)}(z)$ denote the monic Gegenbauer polynomial of degree $n$, see \cite[\S 4.7]{Szg75}. It is known that $P_n^{(\lambda)}(z)$ is a solution of the differential equation 
\begin{eqnarray}\label{0DE}
(z^2-1)y^{\prime\prime}+(2\lambda+1)zy^{\prime}-n(n+2\lambda)y=0,
\end{eqnarray}
see \cite[(4.7.5)]{Szg75}.   In the special cases $\lambda=0$ and $\lambda=1$ one obtains  the Tchebycheff polynomials of the first kind and second kind respectively.

 The variable change  $u=(z^2-1)^{\frac{\lambda}{2}+\frac{1}{4}}y$ transforms \eqref{0DE} into 
\begin{eqnarray}\label{DE}
\frac{d^2 u}{dz^2}-(n+\lambda)^2\left\{\frac{1}{z^2-1}- \frac{1}{(n+\lambda)^2}\frac{(\frac{1}{2}+\lambda-\lambda^2+\frac{z^2}{4})}{(z^2-1)^2}\right\}u=0,
\end{eqnarray}
cf. \cite[(4.7.10)]{Szg75}.   

\medskip
Denote by ${\bf B}_{k,l}$ the exponential  Bell polynomials \cite[(3d) p.134]{comtet74}, by $(n)_k=n(n-1)\cdots (n-k+1)$    the falling factorial, and by   $\varphi(z)=z+\sqrt{z^2-1}$  the Joukowsky function.  

\medskip
For  $k\in \NN\cup\{0\}$, set $$Y_1(z;k,\eta)= \eta^{-\frac{1}{2}}(z^2-1)^{\frac{k}{2}+\frac{1}{4}}(\varphi^{\eta}(z))^{(k)}, \quad Y_2(z;k,\eta)=\eta^{-\frac{1}{2}}(z^2-1)^{\frac{k}{2}+\frac{1}{4}}(\varphi^{-\eta}(z))^{(k)}, z\notin [-1,1],$$ 
where  $\eta$ is a positive parameter.

\begin{theorem}\label{Th1}
In the above notation, for $\eta$ large, one has the expansion
\begin{eqnarray*}
Y_1(z;k,\eta)&=&
 \eta^{-\frac{1}{2}}e^{ \eta\dsty \int_{1}^z \frac{1}{\sqrt{\zeta^2-1}} d\zeta }  \begin{cases}
        (z^2-1)^{\frac{1}{4}},   & k=0    \\
   \left( (z^2-1)^{\frac{k}{2}+\frac{1}{4}}\varphi^{-k}(z) \varphi^{\prime}(z)+ \right. \\
 \dsty  \left.\sum_{j=1}^{\infty} \left(\sum_{l=1}^{k-1} (z^2-1)^{\frac{k}{2}+\frac{1}{4}}a_{j,l,k}\varphi^{-l}(z){\bf B}_{k,l}(\varphi^{\prime}(z),\ldots,\varphi^{(k-l+1)}(z))\right)\frac{1}{\eta^j}\right), & k>0,  
\end{cases}\\
Y_2(z;k,\eta)&=& \eta^{-\frac{1}{2}}e^{  -\eta\dsty \int_{1}^z \frac{1}{\sqrt{\zeta^2-1}} d\zeta }  \begin{cases}
          (z^2-1)^{\frac{1}{4}},   & k=0    \\
 \left( (z^2-1)^{\frac{k}{2}+\frac{1}{4}}\varphi^{-k}(z) \varphi^{\prime}(z)+ \right. \\
 \dsty  \left. \sum_{j=1}^{\infty} \left(\sum_{l=1}^{k-1}(z^2-1)^{\frac{k}{2}+\frac{1}{4}} (-1)^ja_{j,l,k}\varphi^{-l}(z){\bf B}_{k,l}(\varphi^{\prime}(z),\ldots,\varphi^{(k-l+1)}(z))\right)\frac{1}{\eta^j}\right), & k>0,
\end{cases}\
\end{eqnarray*}
where   $\dsty \sum_{l=1}^0:=0$     and  $a_{j,l,k}$ are defined using the Laurent expansion 
\begin{eqnarray}\label{Lau}
\frac{(z)_l}{(z)_k} = \sum_{j=1}^{\infty} \frac{a_{j,l,k}}{z^j}
\end{eqnarray}
for $z$ varying in a neighborhood of $\infty$ and $1\leq l\leq k-1, k>1$. 

\medskip
 Moreover  $Y_1(z;k,\eta)$ and $Y_2(z;k,\eta)$ are WKB solutions   of the differential equation 
\begin{eqnarray}\label{kDE}
\frac{d^2 u}{dz^2}-\eta^2\left\{\frac{1}{z^2-1}- \frac{1}{\eta^2}\frac{(\frac{1}{2}+k-k^2+\frac{z^2}{4})}{(z^2-1)^2}\right\}u=0, \quad z\notin [-1,1] 
\end{eqnarray}
convergent (in the usual sense)  in a neighborhood of  $z=1$.

\end{theorem}

 \proof Observe that by  \cite[(4.21.7)]{Szg75} and \eqref{0DE} for $n\in \NN$ and $0\leq k<n$, one has that    
$$\dsty Y_1(z;k)=\frac{d^kP_n^{(0)}(z)}{dz^k} $$ is a solution to
\begin{eqnarray}\label{2DE}
(z^2-1)y^{\prime\prime}+(2k+1)zy^{\prime}-(n-k)(n+k)y=0.
\end{eqnarray} 

Using \cite[(4.23.1)]{Szg75}, one has that a second solution  of \eqref{0DE}  linearly independent of $P_n^{(\lambda)}$ is given by 
\begin{eqnarray}\label{1}
\psi(z;\lambda)= (1-z)^{-\lambda+\frac{1}{2}}{_2F_1}\left(-n-\lambda+\frac{1}{2}, n+\lambda+\frac{1}{2}; \frac{3}{2}-\lambda; \frac{1-z}{2}\right). 
\end{eqnarray}
Therefore 
\begin{eqnarray}\label{2}
 \psi(z;k)=(1-z)^{-k+\frac{1}{2}}{_2F_1}\left(-n+\frac{1}{2}, n+\frac{1}{2}; \frac{3}{2}-k; \frac{1-z}{2}\right)
 \end{eqnarray}
is  a second solution  of \eqref{2DE}  linearly independent of $Y_1$ when $0\leq k<n, k\in \ZZ$.

\medskip
By \cite[(22) p.102]{Erd53},
\begin{eqnarray}\label{3}
\left(\frac{3}{2}-k\right)_k  z^{-k+\frac{1}{2}}{_2F_1}\left(-n+\frac{1}{2}, n+\frac{1}{2};\frac{3}{2}-k; z\right) =
\frac{d^k}{dz^k}\left(z^{\frac{1}{2}}{_2F_1}\left(-n+\frac{1}{2}, n+\frac{1}{2}; \frac{3}{2}; z\right) \right).
\end{eqnarray}
By considering $\lambda=0$ in \eqref{1} and substituting  $\dsty \frac{1-z}{2} \mapsto z $  in   the right-hand side of \eqref{3} we see that the  left-hand side of \eqref{3} coincides   with \eqref{2} (up to a constant factor). In other words,  
\begin{eqnarray}\label{Y2}
\dsty Y_2(z,k)=c\frac{d^k\psi(z;0)}{dz^k}, c\in \CC\setminus\{0\},
\end{eqnarray}
  is a second  solution  to \eqref{2DE}  linearly independent of $Y_1$.

\medskip

Hence, by considering \eqref{0DE} and \eqref{DE} we obtain  that  $(z^2-1)^{\frac{k}{2}+\frac{1}{4}}Y_1(z;k)   $ and  $(z^2-1)^{\frac{k}{2}+\frac{1}{4}} Y_2(z;k)  $   are solutions of  
\begin{eqnarray}\label{3DE}
\frac{d^2 u}{dz^2}-n^2\left\{\frac{1}{z^2-1}- \frac{1}{n^2}\frac{(\frac{1}{2}+k-k^2+\frac{z^2}{4})}{(z^2-1)^2}\right\}u=0. 
\end{eqnarray}

It can be easily verified that for $\lambda=0$ and $n\geq 0$, $\varphi^n(z)$ and $\varphi^{-n}(z)$ are  solutions of \eqref{0DE}  which implies that there exist constants $c_{ij}(n)$   such that 
\begin{eqnarray}\label{St1}
\begin{cases}
\varphi^{n}(z)&= c_{11}(n)Y_1(z;0)+  c_{12}(n)\psi(z;0)  \\
\varphi^{-n}(z)&=  c_{21}(n)Y_1(z;0)+ c_{22}(n)\psi(z;0).
\end{cases}
\end{eqnarray} 
 Since   $Y_1(z,k)$ and  $Y_2(z,k)$  are  linearly independent solutions   of \eqref{2DE},  relation \eqref{Y2} and \eqref{St1} imply  that $(\varphi^{n}(z))^{(k)}$ and $(\varphi^{-n}(z))^{(k)}$ are also  solutions of \eqref{2DE}. Therefore,  $(z^2-1)^{\frac{k}{2}+\frac{1}{4}}(\varphi^{n}(z))^{(k)}$ and $(z^2-1)^{\frac{k}{2}+\frac{1}{4}} (\varphi^{-n}(z))^{(k)}, 0\leq k< n$ are  solutions of \eqref{3DE}.

To prove that $(z^2-1)^{\frac{k}{2}+\frac{1}{4}} (\varphi^{n}(z))^{(k)}$ and $(z^2-1)^{\frac{k}{2}+\frac{1}{4}} (\varphi^{-n}(z))^{(k)}$ define  convergent  WKB solutions of \eqref{3DE}, 
observe that  by \cite[Th.A p.137]{comtet74}, for $k>0$, one has 
$$
\begin{cases}
(\varphi^{n}(z))^{(k)}=\varphi^{n}(z)\sum_{l=1}^{k} (n)_l\varphi^{-l}(z){\bf B}_{k,l}(\varphi^{\prime}(z),\ldots,\varphi^{(k-l+1)}(z)),\\
(\varphi^{-n}(z))^{(k)}=\varphi^{-n}(z)\sum_{l=1}^{k} (-n)_l\varphi^{-l}(z){\bf B}_{k,l}(\varphi^{\prime}(z),\ldots,\varphi^{(k-l+1)}(z)).
\end{cases}
$$
Extracting the first term in  the summation  for $k>1$, we obtain that the above expressions can be written  as
\begin{eqnarray}\label{SF}
\begin{cases}
\frac{1}{(n)_k}(\varphi^{n}(z))^{(k)}    =e^{n\ln \varphi(z)}\left( \varphi^{-k}(z) \varphi^{\prime}(z)+\sum_{l=1}^{k-1} \frac{(n)_l}{(n)_k}\varphi^{-l}(z){\bf B}_{k,l}(\varphi^{\prime}(z),\ldots,\varphi^{(k-l+1)}(z))\right), \\
\frac{1}{(-n)_k}(\varphi^{-n}(z))^{(k)}= e^{-n\ln \varphi(z)}\left( \varphi^{-k}(z) \varphi^{\prime}(z)+ \sum_{l=1}^{k-1} \frac{(-n)_l}{(-n)_k}\varphi^{-l}(z){\bf B}_{k,l}(\varphi^{\prime}(z),\ldots,\varphi^{(k-l+1)}(z))\right).   
\end{cases}
\end{eqnarray}

Substituting \eqref{Lau} into \eqref{SF} and  considering   the factor $(z^2-1)^{\frac{k}{2}+\frac{1}{4}}$,  we get   two WKB solutions of \eqref{kDE} for $\eta=n$ given by 
$$
\begin{cases}
Y_1(z;k,\eta)= \eta^{-\frac{1}{2}}\allowbreak \exp\left(\dsty \int_{1}^z S_{+}(\zeta,\eta)d\zeta \right)\\Y_2(z;k,\eta)=\eta^{-\frac{1}{2}}\exp\left(\dsty \int_{1}^z S_{-}(\zeta,\eta)d\zeta \right)
\end{cases}.
$$   On the other hand,  substitution of $Y_1$ or $Y_2$ in \eqref{kDE} defines a power series  which vanishes for $\eta=n$ and all $n\in \NN$. Thus $Y_1(z;k,\eta)$ and $Y_2(z;k,\eta)$ are  actually  WKB solutions  of \eqref{kDE} at $z=1$ for large $\eta\in \RR_+$. \lqqd

\begin{coro}

Let $Y_1$ and $Y_2$ be as in Theorem \ref{Th1} and consider the Stokes line   $\kappa$ emanating from $z=1$, 
$$\dsty \Im \left[\int_{1}^z \frac{1}{\sqrt{\zeta^2-1}} d\zeta\right]=0.$$ In other words,   $\kappa=\{z\in \CC: \Re[z]\geq 1, \Im[z]=0\}$.  If  two Stokes regions $U_i, i=1,2$   
share  a subset of $\kappa$ with a chosen orientation of their boundaries, then when  we cross $\kappa$ clockwise or counterclockwise we get 
$$
  \begin{cases}
  Y_{1,1}=Y_{1,2} &  \\
  Y_{2,1}=Y_{2,2},  
\end{cases} 
$$
where $Y_{i,j}=\left.Y_i(z;k,\eta)\right|_{z\in U_j}, i,j\in \{1,2\}.$

\end{coro}

\proof
Follows immediately from the relation $$
\begin{cases}
Y_1(z;k,\eta)= \eta^{-\frac{1}{2}}(z^2-1)^{\frac{k}{2}+\frac{1}{4}}(\varphi^{\eta}(z))^{(k)} \\ Y_2(z;k,\eta)=\eta^{-\frac{1}{2}}(z^2-1)^{\frac{k}{2}+\frac{1}{4}}(\varphi^{-\eta}(z))^{(k)}, z\notin [-1,1].
\end{cases}$$ \lqqd

\begin{theorem}\label{EC}
Let $\epsilon$ be a small complex parameter varying in a punctured neighborhood of the origin, and $K\subset \Omega$ be a compact set. Then, for the  Euler-Cauchy differential equation
\begin{equation} 
\label{ECEq1} z^Mv^{(M)}(z,\eta)+\sum_{k=1}^{M-1} a_kz^k v^{(k)}(z,\eta)- {\eta^{M}{v(z,\eta)} }=0,\\
\end{equation}
there exist $M$ linearly independent  WKB-solutions
$$\psi_j=\exp\left[{\dsty \sum_{k=0}^{\infty}h_{j,k}\epsilon^{k-1}\ln z}\right], \varepsilon\in V^*,$$
convergent  for all  $z\in K$, in a reduced neighborhood of $\varepsilon =0$, where  $h_{j,0}=\sqrt[M]{1}$.
\end{theorem}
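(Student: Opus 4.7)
The plan is to exploit the classical monomial ansatz for Euler--Cauchy equations: namely, $v=z^{\lambda}$ solves \eqref{ECEq1} if and only if $\lambda$ satisfies the indicial polynomial relation
$$
[\lambda]_M+\sum_{k=1}^{M-1}a_k[\lambda]_k=\eta^M,\qquad [\lambda]_k:=\lambda(\lambda-1)\cdots(\lambda-k+1),
$$
which follows at once from $z^k v^{(k)}=[\lambda]_k z^\lambda$. The task then reduces to finding, for each fixed large $\eta$, the $M$ distinct roots of this degree-$M$ polynomial in $\lambda$ and expanding them as convergent Laurent series in $\epsilon:=1/\eta$. To that end I would substitute $\lambda=\mu/\epsilon$ and multiply through by $\epsilon^M$, obtaining the rescaled equation
$$
\Pi(\mu,\epsilon):=\prod_{j=0}^{M-1}(\mu-j\epsilon)+\sum_{k=1}^{M-1}a_k\,\epsilon^{M-k}\prod_{j=0}^{k-1}(\mu-j\epsilon)-1=0,
$$
which is polynomial in $(\mu,\epsilon)\in\CC^2$ with limit $\Pi(\mu,0)=\mu^M-1$ as $\epsilon\to 0$ (since $M-k\geq 1$ in each term of the sum).

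Since the limiting equation $\mu^M=1$ has $M$ simple roots $\omega_j=e^{2\pi i(j-1)/M}$ (precisely the $M$ values of $\sqrt[M]{1}$), and $\partial_\mu\Pi(\omega_j,0)=M\omega_j^{M-1}\neq 0$, the holomorphic implicit function theorem delivers $M$ analytic branches $\mu_j\colon V\to\CC$ on a common disk $V=\{|\epsilon|<r\}$, each with Taylor expansion $\mu_j(\epsilon)=\sum_{k\geq 0}h_{j,k}\epsilon^k$ and $h_{j,0}=\omega_j$. Setting $\psi_j(z,\eta):=z^{\mu_j(\epsilon)/\epsilon}$ and rewriting yields
$$
\psi_j=\exp\Bigl[\,\sum_{k=0}^{\infty}h_{j,k}\epsilon^{k-1}\ln z\,\Bigr],
$$
which is precisely the form the theorem asserts. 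Convergence on a compact $K\subset\Omega=\CC\setminus\{0\}$ (with a chosen branch of $\ln z$) is trivial: the entire sum equals $\epsilon^{-1}\mu_j(\epsilon)\ln z$, and $|\ln z|$ is bounded on $K$, so the series converges absolutely and uniformly on $K\times V^*$ with $V^*:=V\setminus\{0\}$. Linear independence of $\{\psi_1,\dots,\psi_M\}$ on $V^*$ is obtained by shrinking $r$ if necessary so the exponents $\lambda_j(\epsilon)=\mu_j(\epsilon)/\epsilon$ stay mutually distinct; the Wronskian of distinct powers $z^{\lambda_j}$ is then a nowhere vanishing Vandermonde-type determinant.

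No step presents a serious analytic obstacle; the whole argument reduces to a single application of the implicit function theorem once the rescaling $\lambda=\mu/\epsilon$ is identified. The only mild bookkeeping issues are (i) checking carefully that the factor $\epsilon^{M-k}$ really appears in the middle terms of $\Pi(\mu,\epsilon)$ (so that the unperturbed equation is $\mu^M-1=0$ with simple roots, rather than some degenerate polynomial), and (ii) handling the multi-valuedness of $z^{\lambda_j}$ by restricting $K$ to a simply connected subdomain of $\Omega$ (or by working on the universal cover of $\Omega$), so that the choice of branch of $\ln z$ pins down the prescribed value $h_{j,0}\in\sqrt[M]{1}$ without ambiguity.
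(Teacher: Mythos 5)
Your proposal is correct and takes essentially the same route as the paper: reduce via the Euler--Cauchy ansatz $v=z^\lambda$ to the indicial equation, rescale the root ($\lambda=\mu/\epsilon$, which inverts the paper's substitution $y=w\epsilon$ in Lemma~\ref{existeCh}), and apply the implicit function theorem to the polynomial $\Pi(\mu,\epsilon)$ with unperturbed form $\mu^M-1$ to obtain $M$ convergent Laurent branches $\mu_j(\epsilon)/\epsilon$. One small remark in your favor: you check $\partial_\mu\Pi(\omega_j,0)=M\omega_j^{M-1}\neq 0$, which is the hypothesis the implicit function theorem actually needs; the paper's proof of Lemma~\ref{existeCh} instead displays $\partial F/\partial\epsilon(0,h_{j,0})\neq 0$, evidently a slip since that quantity equals $b_{M-1}h_{j,0}^{M-1}$ and can vanish, whereas $\partial F/\partial y(0,h_{j,0})=Mh_{j,0}^{M-1}\neq 0$ always holds. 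You also supply the linear-independence and branch-of-$\log$ details that the paper leaves implicit, which tightens the argument without changing its substance.
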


In particular,  by the preceding theorem,  there is no Stokes phenomenon. 
  Before we prove Theorem \ref{EC} we need a preliminary lemma.

\begin{lemma}\label{existeCh}
Let $b_1, \ldots, b_{M-1}$ be  complex numbers. Then the algebraic equation
\begin{equation}\label{puisexeq}
w^M+\sum_{k=1}^{M-1}b_{k}w^{k} - {b_{0}}{\epsilon^{-M}}=0
\end{equation}
has $M$  solutions $\dsty w_j(\epsilon)=\sum_{k=0}^{\infty}h_{j,k}\epsilon^{k-1}, j=1,\ldots,M$  holomorphic in a neighborhood $V^{*}$ of
$0$, where $h_{j,0}=\sqrt[M]{b_0}$  and  $h_{j,k} \in \CC$.

\end{lemma}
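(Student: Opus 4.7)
The plan is to reduce this to a classical application of the implicit function theorem after clearing the singular term $\epsilon^{-M}$ by a rescaling. Concretely, I would introduce the new unknown $u = \epsilon w$, so that the equation
$$w^M+\sum_{k=1}^{M-1}b_{k}w^{k} - b_{0}\epsilon^{-M}=0$$
becomes, after multiplication by $\epsilon^{M}$,
$$P(u,\epsilon):=u^{M}+\sum_{k=1}^{M-1}b_{k}\,\epsilon^{\,M-k}u^{k}-b_{0}=0.$$
The function $P(u,\epsilon)$ is a polynomial in both variables, hence holomorphic on all of $\CC^{2}$, and the singularity in $\epsilon$ has been absorbed into the change of variable.

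Next I would analyze the fibre $\epsilon=0$. There $P(u,0)=u^{M}-b_{0}$, and since $b_{0}\neq 0$ (implicit in the statement, as $h_{j,0}=\sqrt[M]{b_{0}}$ is meaningful only then), this polynomial has $M$ distinct simple roots $h_{1,0},\dots,h_{M,0}$, namely the $M$-th roots of $b_{0}$. At each such root
$$\partial_{u}P(h_{j,0},0)=M\,h_{j,0}^{\,M-1}\neq 0,$$
so the holomorphic implicit function theorem applies: for each $j=1,\dots,M$ there exists a neighborhood $V$ of $\epsilon=0$ and a unique holomorphic function $u_{j}:V\to\CC$ with $u_{j}(0)=h_{j,0}$ and $P(u_{j}(\epsilon),\epsilon)\equiv 0$ on $V$.

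Finally I would expand $u_{j}$ in its Taylor series $u_{j}(\epsilon)=\sum_{k=0}^{\infty}h_{j,k}\,\epsilon^{k}$, valid on $V$, and undo the substitution by setting
$$w_{j}(\epsilon)=\epsilon^{-1}u_{j}(\epsilon)=\sum_{k=0}^{\infty}h_{j,k}\,\epsilon^{k-1},$$
which is holomorphic on the punctured neighborhood $V^{*}:=V\setminus\{0\}$ and has the prescribed leading coefficient. The fact that these $M$ series are distinct (hence yield $M$ distinct branches) follows immediately from the distinctness of the $h_{j,0}$'s. There is no real obstacle here; the only subtlety worth flagging is the choice of rescaling $u=\epsilon w$, which is dictated by the Newton polygon of the original equation: the dominant balance at $\epsilon\to 0$ is between the monomials $w^{M}$ and $b_{0}\epsilon^{-M}$, forcing $w\sim\epsilon^{-1}$ and motivating the substitution. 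Once that is noticed, the rest is a routine deployment of the implicit function theorem.
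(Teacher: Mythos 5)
Your proof is correct and follows essentially the same approach as the paper: rescale by $u=\epsilon w$, clear the singular term, and apply the holomorphic implicit function theorem at the $M$ distinct simple roots of $u^M-b_0$. You even improve on the paper's wording, which has two small slips (it says ``multiplying by $\epsilon$'' when it means $\epsilon^M$, and writes $\partial F/\partial\epsilon\neq 0$ where the relevant nondegeneracy is $\partial F/\partial y\neq 0$), and you correctly flag the implicit hypothesis $b_0\neq 0$.
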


\begin{proof} Multiplying (\ref{puisexeq}) by $\epsilon$ and making the variable change $y=w\epsilon, $ we obtain the equation 
\begin{equation} \label{ch2}
 F(\epsilon,y)=y^M +\sum_{k=1}^{M-1}b_{k}(\epsilon^{M-k}y^{k})-b_{0}=0. 
\end{equation}

Notice   that
$$
\begin{cases}
 F(0,h_{j,0}) = 0 \\
  \frac{\partial F}{\partial \epsilon}(0,h_{j,0})  \neq  0,
\end{cases}
$$
 where $h_{j,0}=\sqrt[M]{b_0}$ (i.e. all the  roots of $b_0$). Hence,  from the implicit function  theorem, there exists a  neighborhood $V$ of $0$ and $M$  unique  analytic functions $\dsty y_j(\epsilon)=\sum_{k=0}^{\infty}\epsilon^{k}h_{j,k}$ such that $y_j(0)=h_{j,0}$ and  $F(\epsilon,y_j(\epsilon))=0, \forall \epsilon\in V$, see  \cite[Th 3.11, Vol II]{Mark65}. Taking into account that  $y=w\epsilon,\;\epsilon \neq 0$  we complete the  proof.  \lqqd
\end{proof}

Next we settle Theorem \ref{EC}. 

 \begin{proof} Looking for a solution  of equation \eqref{ECEq1} in the form  $v=z^{w}$ we obtain for $w$  the  indicial  equation 
\begin{eqnarray} 
 \label{ind1} w^M+A_{M-1}w^{M-1}+\ldots +A_1w-\frac{1}{\epsilon^M}=0,
\end{eqnarray}
where $A_k\in \CC$.


By Lemma \ref{existeCh} we have that \eqref{ind1}  has  $M$ solutions $\dsty w_j(\epsilon)=\sum_{k=0}^{\infty}h_{j,k}\epsilon^{k-1}$ defined in a reduced neighborhood $V^{*}$ of
$0$. Hence, the eqution \eqref{ECEq1} has $M$ solutions of the form 
$$\dsty v=z^{\sum_{k=0}^{\infty}h_{j,k}\epsilon^{k-1}},\; \varepsilon\in V^*,$$
 where $h_{j,0}=\sqrt[M]{1}$. By writing the latter  expression as 
$$\dsty v=\exp\left[{\dsty \sum_{k=0}^{\infty}h_{j,k}\epsilon^{k-1}\ln z}\right],\; \varepsilon\in V^*,$$
we obtain  $M$ linearly independent convergent  WKB-solutions for the equation~\eqref{ECEq1}. 
\lqqd
\end{proof}

\section{Open problems} 

\noindent
{\bf 1.} The following questions are very crucial for our considerations. 

\begin{problem}
Give a formal definition of a virtual turning point for exactly solvable operators. 
\end{problem}

\begin{problem}
Extend Theorem \ref{Main} in the case $\rho_M=(z-a)^M$. Is the definition  of Stokes curves given by \eqref{STdeg} appropriate  for this case?
\end{problem}

\noindent
{\bf 2.} The next question is related to b) of Theorem \ref{Main} and c) of Theorem \ref{Glob}. 

\begin{problem}
Does the non-existence of self-intersections on the bicharacteristic curve  imply that all new  Stokes curves are inert? Consequently, are there  no "new turning points" from which "new Stokes curves" emanate? 
\end{problem}

\noindent
{\bf 3.} The following guess and question  is related to Theorem~\ref{VTPE}. 
%
%

\begin{conjecture}\label{conj:new} For a generic  equation \eqref{OPer}, Case 2  never happens. 
\end{conjecture}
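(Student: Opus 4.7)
The plan is to prove Conjecture~\ref{conj:new} by an explicit asymptotic analysis at infinity, which I believe actually shows the stronger statement that Case~2 of Theorem~\ref{VTPE} never occurs under its hypotheses, making the genericity assumption superfluous. First, I would parameterize the Stokes curve $\kappa$ of type $(1,j)$ emanating from the turning point $z_k$ by the real parameter $t := \Re[\int_{z_k}^{z(t)}(w_1(\zeta) - w_j(\zeta))d\zeta]$. By Definition~\ref{Tp}, $\kappa$ is contained in the level set $\{\Im[\int_{z_k}^{z}(w_1 - w_j)d\zeta] = 0\}$, and since $w_1 - w_j$ is continuous and nonvanishing on $\Omega$, the velocity $(w_1 - w_j)(z(t))z'(t)$ along $\kappa$ is real and nowhere zero. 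Consequently $t$ is a strictly monotonic parameter along $\kappa$, with $t=0$ at $z_k$, and the two possible labels $(1 > j)$, $(1 < j)$ in Notation~\ref{LbSt} correspond to the two orientations of $t$.

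Next, I would analyze the behavior of the integral as $z \to \infty$ along $\kappa$. From $w_1(z) = z^{-1} + O(z^{-2})$ and $w_j = e^{2\pi\imath(j-1)/M} w_1$, one has $\int^{z}(w_1 - w_j)d\zeta = (1 - e^{\imath\alpha})\ln z + O(1)$ with $\alpha := 2\pi(j-1)/M \in (0, 2\pi)$. Writing $z = re^{\imath\theta}$, the constraint $\Im[\int_{z_k}^{z}(w_1 - w_j)d\zeta] = 0$ forces the asymptotic relation $(1-\cos\alpha)\theta = \sin\alpha \cdot \ln r + O(1)$ along $\kappa$. Substituting into $\Re[\int] = (1-\cos\alpha)\ln r + \theta\sin\alpha + O(1)$ and simplifying via $(1-\cos\alpha)^2 + \sin^2\alpha = 2(1-\cos\alpha)$ yields $\Re[\int_{z_k}^{z}(w_1 - w_j)d\zeta] = 2\ln r + O(1) \to +\infty$ as $r \to \infty$. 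Combined with the strict monotonicity of $t$ from the first paragraph, this forces $t > 0$ for every point of $\kappa$ past $z_k$, so the label of $\kappa$ is $(1 > j)$, placing us in Case~1.

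The main obstacle will be handling the analytic subtleties of the estimate ``$O(1)$ uniformly along $\kappa$''; a priori, a Stokes curve approaching infinity could spiral arbitrarily, so one must justify that the leading logarithmic terms dominate in every such approach. Inspection shows this is automatic because the relation $\Im[\int] = 0$ pins $\theta$ to a logarithmic spiral up to bounded error, and $r \to \infty$ is the only way $\kappa$ can escape a compact set in $\Omega$ under the hypothesis. Two ancillary points also need verification: the degenerate case $\sin\alpha = 0$ (possible only when $M$ is even and $j = M/2 + 1$), where the estimate $\Re[\int] \sim 2\ln r$ holds directly without the spiral relation; and the situation where $\rho_M$ has a zero of multiplicity $> 1$ at $z_k$, where the local structure of $w_1 - w_j$ near $z_k$ changes but the asymptotics at infinity are unaffected. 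If this program succeeds, Conjecture~\ref{conj:new} upgrades to an unconditional theorem, and the word ``generic'' can be dropped.
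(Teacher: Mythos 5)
The statement you're trying to prove is a \emph{conjecture} in the paper --- the authors do not provide a proof, so there is no paper argument to compare against. Your proposal is therefore a genuinely new contribution, and after careful checking I cannot find a flaw in the strategy. The two-part structure --- (i) strict monotonicity of $t = \Re[\int_{z_k}^{z}(w_1-w_j)d\xi]$ along the level set $\{\Im[\int]=0\}$ in $\Omega$, where $w_1-w_j$ is analytic and non-vanishing, and (ii) the boundary values $t\to 0$ at $z_k$ (convergent finite-part-free integral, since the multiplicity of $z_k$ is $<M$ once $\rho_M\neq(z-a)^M$ is excluded) and $t\to+\infty$ at $\infty$ --- is sound, and the algebraic identity $(1-\cos\alpha)^2+\sin^2\alpha=2(1-\cos\alpha)$ makes the sign at infinity strictly positive for every $j\in\{2,\dots,M\}$. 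This is exactly the sort of computation that exploits the defining structure of exactly solvable operators ($\rho_M$ monic of degree $M$ forces $w_1\sim z^{-1}$), so the positivity has nothing to do with genericity.

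Three points deserve more explicit justification in a full write-up. First, the ``$O(1)$'' in $\int_{z_k}^z(w_1-w_j)d\zeta=(1-e^{\imath\alpha})\ln z+O(1)$ must be uniform along a possibly spiraling path; your sketch is correct, and the precise reason is that the forced relation $\theta=\frac{\sin\alpha}{1-\cos\alpha}\ln r+O(1)$ implies the arc length of $\kappa$ between $|z|=r$ and $|z|=2r$ is $O(r)$, whence $\int_\kappa |w_1-1/\zeta|\,|d\zeta|$ is summable over dyadic annuli. Second, the monotonicity statement uses that $\kappa$ stays inside $\Omega$; this should be flagged as part of the hypotheses of Theorem~\ref{VTPE} (a Stokes curve of type $(1,j)$ going to $\infty$ without hitting another turning point cannot terminate on $\supp\mu^\MM$, and $w_1$ is single-valued on $\Omega$ because the monodromy of $\rho_M^{-1/M}$ around all $M$ zeros is trivial --- worth saying explicitly). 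Third, your parenthetical claim that ``the two labels correspond to the two orientations of $t$'' is not quite what you use: the label is the \emph{sign} of $t$ at the point under consideration, and what the argument actually establishes is that $t$ increases from $0$ and is therefore positive on all of $\kappa\setminus\{z_k\}$, which is the stronger statement you need. With those clarifications, the argument promotes Conjecture~\ref{conj:new} to an unconditional theorem under the hypotheses of Theorem~\ref{VTPE}, and I see no reason the authors' genericity hedge is necessary --- though given that they chose to state it conditionally, a second careful pass over the branch bookkeeping near $z_k$ and along the spiral would be prudent before announcing the stronger result.
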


\noindent
{\bf 4.} The last question is the most important in this area of research.
 
\begin{problem}{\rm Describe the Stokes complex, i.e. the union of Stokes curves for an arbitrary (non-degenerate) exactly solvable operator \eqref{OPer}. }
\end{problem}

\noindent
{\bf 5.} Inspired by Theorem~\ref{EC}  and based on  some calculations, we have the following guess.

\begin{conjecture}\label{AWKB}
For an arbitrary  holomorphic function $\rho_3$, the WKB-solutions of the differential equation 
\begin{equation} \label{OPf}
\rho_3(z)v^{\prime\prime\prime}+\rho^{\prime}_{3}(z)v^{\prime\prime}+\left(3\rho_3^{\prime\prime}(z)-\frac{\rho^{\prime 2}_3(z)}{\rho_3(z)}\right)v^{\prime}- \frac{v}{\epsilon^{M} }=0,
\end{equation}
 reduce to 
$$\psi_j=\exp\left[{\dsty \frac{1}{\epsilon}\int^z \frac{1}{\sqrt[3]{\rho_3(\xi)}}d\xi  }\right].$$
\end{conjecture}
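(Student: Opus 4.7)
The plan is to verify the conjecture by direct substitution, exploiting the fact that the three proposed $\psi_j$'s turn out to be genuine \emph{exact} solutions of~\eqref{OPf}, not merely asymptotic ones. Fix a simply-connected subdomain in which $\rho_3$ does not vanish, choose any branch of $w(z):=\rho_3(z)^{-1/3}$, and set $\psi(z,\epsilon):=\exp\bigl(\epsilon^{-1}\int^{z}w(\xi)\,d\xi\bigr)$; the three independent WKB-solutions $\psi_j$ correspond to the three cube-root branches $w_j=e^{2\pi\imath(j-1)/3}\rho_3^{-1/3}$. A direct computation gives
$$\frac{\psi'}{\psi}=\epsilon^{-1}w,\qquad \frac{\psi''}{\psi}=\epsilon^{-2}w^{2}+\epsilon^{-1}w',\qquad \frac{\psi'''}{\psi}=\epsilon^{-3}w^{3}+3\epsilon^{-2}ww'+\epsilon^{-1}w''.$$

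Substituting into~\eqref{OPf} and grouping by powers of $\epsilon^{-1}$, there are three algebraic identities to verify. The $\epsilon^{-3}$-coefficient is $\rho_3w^{3}-1$, which vanishes by the definition of $w$. The $\epsilon^{-2}$-coefficient is $3\rho_3ww'+\rho_3'w^{2}$; differentiating $\rho_3w^{3}=1$ yields $w'=-w\rho_3'/(3\rho_3)$, from which this identity follows immediately. The $\epsilon^{-1}$-coefficient is $\rho_3w''+\rho_3'w'+A(z)\,w$, where $A(z)$ denotes the coefficient of $v'$ in~\eqref{OPf}. Using $w''=w\bigl(\tfrac{4(\rho_3')^{2}}{9\rho_3^{2}}-\tfrac{\rho_3''}{3\rho_3}\bigr)$, obtained by differentiating $w'=-w\rho_3'/(3\rho_3)$, this identity collapses to the purely algebraic requirement $A(z)=\tfrac{\rho_3''}{3}-\tfrac{(\rho_3')^{2}}{9\rho_3}$.

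The main obstacle is a discrepancy at precisely this point: the coefficient of $v'$ as printed in~\eqref{OPf}, namely $3\rho_3''-\tfrac{(\rho_3')^{2}}{\rho_3}$, differs from what the identity demands by an overall factor of~$9$. I suspect a typographical error in the conjecture, with the intended coefficient being $\tfrac{1}{9}\bigl(3\rho_3''-\tfrac{(\rho_3')^{2}}{\rho_3}\bigr)=\tfrac{\rho_3''}{3}-\tfrac{(\rho_3')^{2}}{9\rho_3}$. The diagnostic case $\rho_3(z)=z^{3}$ confirms this: with the corrected coefficient the equation reduces to the Euler--Cauchy equation $z^{3}v'''+3z^{2}v''+zv'-v/\epsilon^{3}=0$, whose indicial polynomial is $r^{3}=\epsilon^{-3}$, yielding solutions $z^{e^{2\pi\imath j/3}/\epsilon}=\exp\bigl(\epsilon^{-1}\int^{z}d\xi/\sqrt[3]{\xi^{3}}\bigr)$, in exact agreement with the conjecture and with Theorem~\ref{EC}.

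Once the three $\psi_j$ are verified as exact solutions, the claim that the WKB-expansion literally \emph{reduces} to them is automatic: by the uniqueness of the formal WKB ansatz attached to each branch $w_j$ of the symbol equation (cf.~Lemma~\ref{Fj}), any exact solution of that leading exponential form must coincide with the full formal WKB-series, forcing all subleading coefficients $S_{k}$ ($k\geq 1$) to vanish identically. Linear independence of $\psi_{1},\psi_{2},\psi_{3}$ is evident from the distinctness of their leading exponents, so they span the three-dimensional solution space; no further tools from the Borel summability or Stokes geometry developed in the main body are required, since the content of the conjecture is precisely that this machinery trivializes for~\eqref{OPf}.
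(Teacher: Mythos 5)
The paper states this as an open conjecture (``Inspired by Theorem~\ref{EC} and based on some calculations, we have the following guess'') and supplies no proof, so there is no author argument to compare against; you are filling a gap the paper leaves open. Your direct-substitution strategy is the right one and the arithmetic checks out. Writing $w=\rho_3^{-1/3}$, one has $w'=-\tfrac{\rho_3'}{3\rho_3}w$ and $w''=\bigl(\tfrac{4(\rho_3')^2}{9\rho_3^2}-\tfrac{\rho_3''}{3\rho_3}\bigr)w$, the $\epsilon^{-3}$ and $\epsilon^{-2}$ coefficients vanish as you say, and the $\epsilon^{-1}$ coefficient forces $A(z)=\tfrac{\rho_3''}{3}-\tfrac{(\rho_3')^2}{9\rho_3}$, which is exactly $\tfrac{1}{9}$ of the coefficient printed in \eqref{OPf}. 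Your Euler--Cauchy diagnostic is decisive: with $\rho_3=z^3$ the paper's stated coefficient gives $A(z)=9z$ and hence the indicial polynomial $r^3+8r=\epsilon^{-3}$, whose Puiseux roots carry genuine higher-order corrections, contradicting the claimed truncation; with your corrected $A(z)=z$ it collapses to $r^3=\epsilon^{-3}$, matching Theorem~\ref{EC}. So the conjecture is false exactly as printed (almost certainly a typographical factor of $9$, and the $\epsilon^M$ should read $\epsilon^3$), and you have both detected and repaired the error.

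Your closing uniqueness step is also correct and is the piece that turns ``these are exact solutions'' into ``the WKB series reduce to them.'' Once $S_0=w_j$ is fixed, the Riccati recursion (cf.~Lemma~\ref{Fj} and the discussion following Definition~\ref{WKBS}) determines each $S_k$, $k\geq 1$, uniquely; the exact logarithmic derivative $\eta w_j$ is a formal power-series solution with all higher coefficients zero, so by that uniqueness the full formal WKB series has $S_k\equiv 0$ for $k\geq 1$. Linear independence of $\psi_1,\psi_2,\psi_3$ follows from the distinct leading exponents. The only substantive caveat to record is that you have proven a corrected version of Conjecture~\ref{AWKB}, not the literal statement; this should be said explicitly, since the literal statement is disprovable by your own $\rho_3=z^3$ test.
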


\section{Appendix. Sibuya's theorem on Gevrey summability of formal power series  depending on a parameter.  } \label{append:III}  

Suppose that we have:

\begin{itemize}
\item[1)] a formal power series $\dsty \phi(z,\epsilon)=\sum_{k=0}^{\infty}\epsilon^k\phi_k(z)$ depending on $\epsilon$, where the coefficients $\phi_k$ are complex--valued functions holomorphic in $z$ in a simply connected domain $D_0$ of the $z$--plane;

\item[2)] a nontrivial polynomial
$$F(x_0,x_1,\ldots,x_l,z,\epsilon)=\sum_{m_0+\ldots +m_l=0}^Rx_0^{m_0}\cdots x_l^{m_l}F_{m_0,\ldots,m_l}(z,\epsilon),$$
 with coefficients $\dsty F_{m_0,\ldots,m_l}(z,\epsilon)=\sum_{m=0}^{\infty}\epsilon^mF_{m_0,\ldots,m_l;m}(z)$ which are formal power series in $\epsilon$ with  coefficients $F_{m_0,\ldots,m_l;m}(z)$ being complex--valued and holomorphic in $z$ in the domain $D_0$;

\item[3)]  a formal power series $\dsty F\left(\phi,\frac{d\phi}{dz},\frac{\phi^2\phi}{dz^2}, \ldots, \frac{d^{N-1}\phi}{dz^{N-1}},z,\epsilon\right)$ depending on  $\epsilon$ which is identically equal to zero  in the domain $D_0$;

\item[4)]  nonnegative numbers $s,K_1$, and $K_2$ such that 
$$|F_{m_0,\ldots,m_l;m}(z)|\leq K_1(m!)^sK_2^m,$$
for  $z\in D_0$ and $(m_0,m_1,\ldots,m_l;m)\in \NN^{l+2}$ such that  $\dsty 0\leq \sum_{k=0}^lm_k\leq R$. In other words, $F_{m_0,\ldots,m_l}(z,\epsilon)$ are of the  Gevrey  order $s$ in $\epsilon$ uniformly in $z\in D_0$.

\end{itemize}

A theorem due to Sibuya  (see \cite[Th.1.2.1]{Si03})  provides  the Gevrey summability of  a formal series in $\epsilon$ satisfying the condition $3)$ uniformly in $z$ on every compact subset of $D_0$, under some assumptions described below as Cases A, B and C.

To state  this theorem, let us assume that 
\begin{eqnarray}\label{Dxl}
\frac{\partial F_j}{\partial x_{N-1}}\left(\phi,\frac{d\phi}{dz},\frac{\phi^2\phi}{dz^2}, \ldots, \frac{d^{N-1}\phi}{dz^{N-1}},z,\epsilon\right)\neq 0,
\end{eqnarray}
for some $z\in D_0$ as a formal power series in $\epsilon$.

Define the linear differential  operator 
$$\mathcal{T}[x]=\sum_{h=0}^{N-1}\frac{\partial^h F_j}{\partial x_h}\left(\phi,\frac{d\phi}{dz},\frac{d^2\phi}{dz^2}, \ldots, \frac{d^{N-1}\phi}{dz^{N-1}}, z,\epsilon\right)D^hx,$$ 
where $\dsty D=\frac{d}{dz}$. 

For the above operator $\mathcal{T}$, construct a convex polygon   as follows. 

Set
 $$\frac{\partial F_j}{\partial x_h}\left(\phi,\frac{d\phi}{dz},\frac{d^2\phi}{dz^2}, \ldots, \frac{d^{N-1}\phi}{dz^{N-1}},z,\epsilon\right)= \sum_{m=0}^{\infty}\epsilon^ma_{h,m}(z), \quad h=0,\ldots,N-1.$$
 
 For $h=0,\ldots,N-1$, fix    nonnegative integers coefficients  $m_h, (h=0,1,\ldots,N-1)$ defined by the conditions 
\begin{eqnarray}
\begin{aligned}\label{ahm}
a_{h,m}(z)&=0, \quad  h=0,\ldots , m_h-1, \forall z\in U,\\
a_{h,m_h}(z)&\neq 0 \quad \mbox{for some} \quad  z\in U.
\end{aligned}
\end{eqnarray}

If all $a_{h,m}(z)=0$ identically in $z$ for all $m\geq 0$, we set $m_k=+\infty$.

\medskip
Let us consider $N$ points $(h,m_h), h=0,\ldots, N-1$ in the $(X,Y)$-plane. 
The convex hull  of the set  $\mathcal{P}=\bigcup_h\mathcal{P}_h$, where $\mathcal{P}_h=\{(X,Y): 0\leq X\leq h, Y\geq m_h\}$ is called the \textcolor{blue}{\emph{polygon of the operator $\mathcal{T}$}}. In other words, there exist nonnegative integers
\begin{eqnarray}\label{hk}
0\leq h_1<h_2<\ldots <h_k=N-1,
\end{eqnarray}
such that 
\begin{itemize}
\item[i)] $m_{h_1}\geq 0$,
\item[ii)] if we set 
\begin{eqnarray}\label{rhonu}
\rho_{\nu}=\frac{m_{h_{\nu}}-m_{h_{\nu-1}}}{h_{\nu}-h_{\nu-1}}, \nu=2, \ldots,k
\end{eqnarray}
we have 
$$0<\rho_2<\ldots<\rho_k,$$
\item[iii)]   $m_{h}\geq m_{h_1}$, for $0\leq h\leq h_1$, and 
$$\frac{m_{h_{\nu}}-m_{h}}{h_{\nu}-h}, \quad \mbox{for} \quad h_{\nu-1}<h\leq h_{\nu}, \quad \mbox{and}\quad  \nu=2,\ldots, k.$$

\end{itemize}

Now, under the assumption  \eqref{Dxl}, the  Cases A, B, and C  are described as follows.

\begin{itemize}
\item[Case A:] The integer $h_1=0$, i.e.
\begin{eqnarray}\label{CRelCas}
\epsilon^{-m_{h_1}}\left|\mathcal{T}[y]\right|_{\epsilon=0}=Q_0(z)y,
\end{eqnarray}
where $Q_0(z)$ is holomorphic on $D_0$ and not identically equal to zero. 

\end{itemize}

\noindent For Cases B and C, we have $h_1>0$, i.e.
$$\epsilon^{-m_{h_1}}\left|\mathcal{T}[y]\right|_{\epsilon=0}=\sum_{j=0}^{h_1}Q_j(z)D^{j}y,$$
where $Q_0(z), \ldots, Q_{h_1}(z)$ are  holomorphic in $D_0$ and $Q_{h_1}(z)$ is not identically vanishing in $ D_0$.

\begin{itemize}
\item[Case B:]   $Q_{h_1}(z)$ has no zeros in  $D_0$. 

\item[Case C:]  $Q_{h_1}(z)$  vanishes at some point $z\in D_0$. 

\end{itemize}

In this article we are only interested in Case A in which, under the assumption \eqref{Dxl},  Sibuya's theorem claims the following. 
 
 \begin{theorem}[Theorem 1.2.1 of  \cite{Si03}]\label{Sib}
 In Case A, the formal series $\phi(z,\epsilon)$ has  Gevrey order $\dsty \max\left(\frac{1}{\rho_2},s\right)$  in $\epsilon$ uniformly in the variable $z$ belonging to any compact subset of $D_0$.
 \end{theorem}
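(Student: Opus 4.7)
The plan is to derive a recurrence for the coefficients $\phi_k(z)$ from the identity $F(\phi, D\phi, \ldots, D^{N-1}\phi, z, \epsilon) \equiv 0$ and to propagate Gevrey bounds along this recurrence using Nagumo-type norms on a shrinking family of compact subdomains. Expanding the nonlinear expression in powers of $\epsilon$ and collecting the coefficient of $\epsilon^n$ produces an equation of the form
\begin{equation*}
Q_0(z)\,\phi_{n-m_{h_1}}(z)=R_n\bigl(\phi_0,\ldots,\phi_{n-m_{h_1}-1},D\phi_0,\ldots,D^{N-1}\phi_{n-m_{h_1}-1},z\bigr),
\end{equation*}
where $R_n$ collects all contributions from the remaining monomials of $F$ together with the higher-order parts of the linearization $\mathcal{T}$. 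In Case A the polygon condition $h_1=0$ reduces the leading linearization to multiplication by $Q_0$, so after restricting to a compact $K\subset D_0$ on which $|Q_0(z)|\ge c>0$ (removing neighborhoods of the finitely many zeros of $Q_0$ in a slightly larger compactum if necessary) one can algebraically solve for $\phi_{n-m_{h_1}}$ at each step.

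Next I would introduce a nested family $K\subset\cdots\subset K_2\subset K_1\subset K_0\subset D_0$ together with Nagumo norms $\|f\|_{j,\alpha}$ that measure $\sup_{K_j}|f|$ weighted by a factor trading loss of derivative radius against factorial growth. The classical Nagumo inequality $\|D^h f\|_{j+1,\alpha+h}\le C\|f\|_{j,\alpha}$ books the cost of the $h$-fold derivative into the norm's $\alpha$-parameter rather than into an explicit factorial, which is the right bookkeeping for the Gevrey estimate. Then I would set up an inductive ansatz
\begin{equation*}
\|\phi_n\|_{j_n,\alpha_n}\;\le\; A\,B^n\,(n!)^{\sigma},\qquad \sigma=\max\!\left(\tfrac{1}{\rho_2},\,s\right),
\end{equation*}
with $j_n,\alpha_n$ chosen to grow linearly in $n$ with a slope proportional to $1/\rho_2$.

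Substituting this ansatz into the right-hand side $R_n$, the convolution sums arising from the polynomial structure of $F$ produce combinatorial factors controlled by the Newton polygon of $\mathcal{T}$: each monomial originating from a vertex $(h_\nu,m_{h_\nu})$ contributes a $\epsilon$-weight $\ge m_{h_\nu}$ accompanied by a derivative of order $\le h_\nu$, and the slope identity $\rho_\nu=(m_{h_\nu}-m_{h_{\nu-1}})/(h_\nu-h_{\nu-1})$ guarantees that every unit of derivative cost is offset by at least $\rho_2$ additional powers of $\epsilon$. This is precisely what converts Cauchy-type derivative losses of order $h!$ into $(n-k)/\rho_2!$-type growth and hence into a Gevrey index of $1/\rho_2$. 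Meanwhile the hypothesis on $F_{m_0,\ldots,m_l;m}$ contributes an independent Gevrey-$s$ factorial; the two contributions combine as their maximum, yielding $\sigma$.

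The main obstacle is closing the induction with the sharp constant $\sigma$: one must verify that the combinatorial sum over partitions $(m_0,\ldots,m_l;m)$ with total $\epsilon$-weight $n$ is dominated by a geometric series, and that the incremental shift $j\mapsto j+1$ in the Nagumo norms does not accumulate faster than $B^n$. This is handled by the standard geometric-series trick $\sum_{k=1}^{n-1}(k!)^{\sigma}\bigl((n-k)!\bigr)^{\sigma}\le C(n!)^{\sigma}$ together with a careful choice of $\alpha_n=\kappa n$ with $\kappa$ proportional to $1/\rho_2$ so that the Nagumo weight exactly absorbs the derivative cost produced at each vertex of the polygon. The whole argument proceeds in the Canalis-Durand–Ramis–Schäfke–Sibuya framework and reduces, after these reductions, to a finite system of elementary majorant estimates.
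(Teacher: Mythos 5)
The paper does not prove this statement: Theorem~\ref{Sib} is quoted verbatim from Sibuya's work \cite{Si03} (Theorem 1.2.1 there) and is used as an imported black box in the proof of Lemma~\ref{pre-borel}. There is therefore no in-paper proof to compare against, and your proposal must be judged on its own.

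Your sketch follows the standard Canalis-Durand--Ramis--Sch\"afke--Sibuya strategy (recurrence for the $\phi_n$ from the vanishing of $F$, Nagumo norms on nested compacts to book derivative losses, Newton-polygon slope converting derivative cost into $\epsilon$-weight), which is indeed the right family of ideas, but two points need attention. First, in Case A the hypothesis is only that $Q_0$ is holomorphic and \emph{not identically zero} on $D_0$, while the conclusion is uniformity on \emph{any} compact subset of $D_0$; your reduction to compacts on which $|Q_0|\ge c>0$ does not by itself give the stated conclusion near zeros of $Q_0$. Since the $\phi_n$ are holomorphic by hypothesis, one can recover the bound on a full compact $K$ by the maximum modulus principle applied on a surrounding contour avoiding the finitely many zeros of $Q_0$ in a neighborhood of $K$, but this step must be said explicitly; otherwise the induction itself (which divides by $Q_0$ at every order) is not closed on the compacts where the theorem is claimed. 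Second, the central quantitative claim --- that the slope identity $\rho_\nu=(m_{h_\nu}-m_{h_{\nu-1}})/(h_\nu-h_{\nu-1})$ forces exactly Gevrey index $1/\rho_2$ and that the convolution sums close with the sharp exponent $\sigma=\max(1/\rho_2,s)$ --- is asserted rather than derived; this is precisely the technical heart of Sibuya's proof, and without carrying out the majorant estimates (in particular verifying that the Nagumo parameter increment $\alpha_n=\kappa n$ with $\kappa\sim 1/\rho_2$ is compatible with \emph{all} vertices $(h_\nu,m_{h_\nu})$ of the polygon, not only the first edge) the argument remains a plausible outline rather than a proof.
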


\bibliographystyle{ieeetr}

\bibliography{WKBExact3}

\end{document}